\numberwithin{equation}{section}
\theoremstyle{plain}
\newtheorem{theorem}{Theorem}[section]
\newtheorem{lemma}[theorem]{Lemma}
\theoremstyle{definition}
\newtheorem{definition}[theorem]{Definition}
\theoremstyle{remark}
\newtheorem{remark}[theorem]{Remark}
\newtheorem{notation}{Notation}[section]
\def\dashint{\operatorname%
{\,\,\text{\bf--}\kern-.98em\DOTSI\intop\ilimits@\!\!}}
\def\bR{\mathbb{R}}
\def\cA{\mathcal{A}}
\def\cL{\mathcal{L}}
\def\cG{\mathcal{G}}
\def\cF{\mathcal{F}}
\def\cU{\mathcal{U}}
\begin{document}
\title[Stokes system]{Gradient estimates for Stokes systems with Dini mean oscillation coefficients}

\author[J. Choi]{Jongkeun Choi}
\address[J. Choi]{
School of Mathematics, Korea Institute for Advanced Study, 85 Hoegiro, Dongdaemun-gu, Seoul 02455, Republic of Korea}

\email{jkchoi@kias.re.kr}

\thanks{J. Choi was supported by Basic Science Research Program
through the National Research Foundation of Korea (NRF) funded by the Ministry of Education
(2017R1A6A3A03005168)}

\author[H. Dong]{Hongjie Dong}
\address[H. Dong]{Division of Applied Mathematics, Brown University, 182 George Street, Providence, RI 02912, USA}

\email{Hongjie\_Dong@brown.edu}

\thanks{H. Dong was partially supported by the NSF under agreement DMS-1600593.}

\subjclass[2010]{76D07, 35B65}
\keywords{Stokes system, measurable coefficients, Dini mean oscillation condition, $C^1$-estimate, weak type-$(1,1)$ estimate}

\begin{abstract}
We study the stationary Stokes system in divergence form.
The coefficients are assumed to be merely measurable in one direction and have Dini mean oscillations in the other directions.
We prove that if $(u,p)$ is a weak solution of the system, then $(Du,p)$ is bounded and its certain linear combinations are continuous.
We also prove a weak type-$(1,1)$ estimate for $(Du,p)$ under a stronger assumption on the $L^1$-mean oscillation of the coefficients.
The corresponding results up to the boundary on a half ball are also established. These results are new even for elliptic equations and systems.
\end{abstract}

\maketitle

\section{Introduction}		\label{Sec1}
We consider the stationary Stokes system with variable coefficients
\begin{equation}		\label{171219@eq5a}
\left\{
\begin{aligned}
\cL u+\nabla p=D_\alpha f_\alpha \quad \text{in }\, B_6,\\
\operatorname{div}u=g \quad \text{in }\, B_6,
\end{aligned}
\right.
\end{equation}
where $B_6=B_6(0)$ is the Euclidean ball in $\bR^d$ of radius $6$ centered at the origin.
The elliptic operator $\cL$ is in divergence form acting on column vector-valued functions $u=(u^1,\ldots,u^d)^{\top}$ as follows:
$$
\cL u=D_\alpha (A^{\alpha\beta}D_\beta u),
$$
where we use the Einstein summation convention on repeated indices.
Recently, $L^{q}$ theory of the Stokes system \eqref{171219@eq5a} was studied in \cite{arXiv:1604.02690v2, MR3758532, MR3809039}.
A consequence of those papers is that every weak solutions $(u, p)$ of the system \eqref{171219@eq5a} satisfy
$$
(u,p)\in W^{1,q}(B_1)^{d}\times  L^q(B_1)
$$
provided that the coefficients $A^{\alpha\beta}$ are merely measurable in one direction and have small mean oscillations in the other directions (the partially BMO condition), and that the data $f_\alpha$ and $g$ are in $L^q(B_6)$.
We note that the above $L^q$-regularity result holds when $q\in (1,\infty)$.
In this paper, we investigate minimal regularity assumption of the coefficients, which guarantees $L^\infty$-regularity for $Du$ and $p$.
We are also interested in $C^1$, weak type-$(1,1)$, and partial Schauder estimates for the Stokes system.

Throughout the paper, the coefficients $A^{\alpha\beta}=A^{\alpha\beta}(x)$ are $d\times d$ matrix-valued functions on $\bR^d$ satisfying the strong ellipticity condition; see \eqref{ell}.
We assume that $A^{\alpha\beta}$ are merely measurable in $x_1$-direction and the $L^1$-mean oscillation of $A^{\alpha\beta}$ with respect to $x'=(x_2,\ldots,x_d)$ satisfies the {\emph{Dini condition}}
\begin{equation}		\label{171219@eq5}
\int_0^1 \frac{\omega(r)}{r}\,dr<\infty.
\end{equation}
In this case, $A^{\alpha\beta}$ is called of {\emph{partially Dini mean oscillation}}; see Definition \ref{D1} for more precise definition.
As mentioned in \cite{arXiv:1604.02690v2}, such type of coefficients with no regularity assumption in one direction can be used to describe the motion of two or multiple fluids with interfacial boundaries.

This paper has two parts.
In Part \ref{Part1}, we are concerned with the interior regularity of weak solutions to the Stokes system.
We prove that if the coefficients and data are of partially Dini mean oscillation, then any weak solution to the Stokes system \eqref{171219@eq5a} satisfies
\begin{equation}		\label{171224@eq1}
(u, p)\in W^{1,\infty}(B_1)^{d}\times L^\infty(B_1).
\end{equation}
In particular, we show that certain linear combinations satisfy
\begin{equation}		\label{171226@eq1}
\hat{U}, D_{\alpha}u\in C(\overline{B_1})^d, \quad \alpha\in \{2,\ldots,d\},
\end{equation}
where $\hat{U}=A^{1\beta}D_\beta u+pe_1-f_1$ and $e_1$ is the first unit vector in $\bR^d$.
These linear combinations are H\"older continuous if we further assume that the coefficients and data are partially H\"older continuous; see Theorem \ref{M1}.
An immediate consequence of the results is that any weak solution to the Stokes system satisfies
$$
(u, p)\in C^1(\overline{B_1})^{d}\times C(\overline{B_1}) \quad \left(\text{resp. }\, (u, p)\in C^{1,\gamma_0}(\overline{B_1})^{d}\times C^{\gamma_0}(\overline{B_1})\right)
$$
provided that the coefficients and data are of Dini mean oscillation (resp. H\"older continuous) in {\emph{all}} directions; see Theorem \ref{M0}.
We note that our regularity results hold for $W^{1,1}$-weak solutions; see Theorem \ref{M3} and Remark \ref{171218@rmk1}.
With regard to previous results on $C^{1,\gamma_0}$-regularity for  $W^{1,2}$-weak solutions to linear or nonlinear Stokes systems, see  \cite{MR0641818}.
We also prove a weak type-$(1,1)$ estimate for $(Du, p)$ under the assumption that $\omega(r)\sim (\ln r)^{-2}$ for any small $r$, where $\omega$ is the $L^1$-mean oscillation of $A^{\alpha\beta}$ with respect to $x'$; see Theorem \ref{M2}.

In Part \ref{Part2}, we consider the corresponding boundary regularity results on the half ball $B_6^+=B_6\cap \bR^d_+$, where $\bR^d_+=\{x=(x_1,x')\in \bR^d:x_1>0, \, x'\in \bR^{d-1}\}$.
The main result is that if $(u, p)$ is a weak solution of
$$
\left\{
\begin{aligned}
\cL u+\nabla p=D_\alpha f_\alpha &\quad \text{in }\, B_6^+,\\
\operatorname{div} u=g &\quad \text{in }\, B_6^+,\\
u=0 &\quad \text{on }\, B_6\cap \partial \bR^d_+,
\end{aligned}
\right.
$$
where the coefficients and data are of partially Dini mean oscillation, then we have
\begin{equation}		\label{171226@A2}
\begin{aligned}
&(u, p)\in W^{1,\infty}(B_1^+)^{d}\times L^\infty(B_1^+),\\
&\hat{U}, D_{\alpha}u\in C(\overline{B_1^+})^d, \quad \alpha\in \{2,\ldots,d\}.
\end{aligned}
\end{equation}
As consequences of this result, we get $C^1$ and weak type-$(1,1)$ estimates for the solution.
For more precise statements of the boundary regularity results, see Section \ref{Sec5} in Part \ref{Part2}.

For the elliptic equation in divergence form
$$
D_i(a^{ij}D_j u)=0,
$$
it is well known that any weak solution $u$ is continuously differentiable provided that the coefficients $a^{ij}=a^{ij}(x)$ satisfy the $\alpha$-increasing Dini continuity condition for some $\alpha\in (0,1]$; see, for instance, \cite{MR0521856,MR0932759}.
In \cite{MR3615500}, Li proved that weak solutions of the elliptic equation (or system) are continuously differentiable when the modulus of continuity of coefficients in the $L^\infty$ sense satisfies the Dini condition \eqref{171219@eq5}.
This regularity result was extended by Dong-Kim \cite{MR3620893} to the case of
$$
D_i(a^{ij}D_j u)=\operatorname{div}f,
$$
where the coefficients and data are of Dini mean oscillation.
They also established weak type-$(1,1)$ estimates for derivatives of solutions under a stronger assumption on the mean oscillation, and
the corresponding results for nondivergence form equations.
Later, Dong-Escauriaza-Kim \cite{MR3747493} extended the results in \cite{MR3620893} up to the boundary for the solutions satisfying the Dirichlet boundary condition.
We also refer the reader to \cite{MR2927619, MR3403998}, where the authors considered parabolic and elliptic systems with partially Dini or H\"older continuous coefficients. We note that partial Schauder estimates for elliptic equations were studied long time ago by Fife \cite{MR0158162}. See also \cite{MR2644772,MR2764915,MR3374066,DK15b} and the references therein for some recent work in this direction.

Our arguments in establishing \eqref{171224@eq1} and \eqref{171226@eq1} are based on techniques, called Campanato's approach, used in \cite{MR2927619, MR3620893}. The main step of Campanato's approach is to show that, for example,  in the case of elliptic equations in divergence form, the mean oscillations of $Du$ in balls vanish in certain order as the radii of balls go to zero.
For this, in \cite{MR3620893}, the authors utilized weak type-$(1,1)$ estimates for elliptic equations with constant coefficients.
However, we are not able to follow this approach in the same way to control the mean oscillation of $Du$ because we only impose the assumption on the $L^1$-mean oscillation of the coefficients and data with respect to $x'$.
This assumption causes the lack of regularity of $Du$ and $p$ in the $x_1$-direction.
To overcome this difficulty, we exploit an idea in \cite{MR2927619} to refine the argument in \cite{MR3620893} in the setting of the Stokes system with coefficients measurable in $x_1$-direction, and then apply the weak type estimate to control the $L^{q}$-mean oscillations of $\hat{U}=A^{1\beta}D_\beta u+p e_1-f_1$ and $D_{x'}u$ for $q\in (0,1)$.
This allows us to get the desired results \eqref{171224@eq1} and \eqref{171226@eq1}.
We point out that based on our argument, one can investigate  divergence form elliptic equations and systems with coefficients having partially Dini mean oscillation.

For the boundary regularity \eqref{171226@A2}, we adapt the aforementioned arguments and the techniques used in \cite{MR3747493}, where the authors proved boundary $C^1$-estimates for elliptic equations with coefficients having Dini mean oscillation.
We note that in \cite{MR3747493}, they established the boundary estimates in a half ball as well as in a domain $\Omega$ with $C^{1,\rm{Dini}}$ boundary, i.e., $\partial\Omega$ is locally the graph of a $C^1$ function whose derivatives are uniformly Dini continuous.
In fact, in this case, the estimate in $\Omega$ is an easy consequence of that in the half ball because the mapping of flattening boundary preserves the regularity assumptions on the coefficients and data.

In a subsequent paper, we will study the boundary regularity of weak solutions to the Stokes system with coefficients having Dini mean oscillation (in all directions) in a domain with $C^{1,\rm{Dini}}$ boundary. It turns out that in this case, the proof is more involved than in the case of elliptic equations because of the pressure term and the divergence equation in the Stokes system.

The $L^\infty$ and $C^{\alpha}$ estimates for Stokes systems play an important role in the study of Green functions.
In \cite{MR3693868}, the authors constructed the Green function of the Stokes system for a representation formula of the flow velocity $u$ by using the $C^\alpha$-estimate for the system.
Based on our results regarding the $L^\infty$-estimate for the pressure $p$, we will discuss the Green function of the Stokes system for a representation formula of the pressure $p$ in a forthcoming paper.

The remainder of this paper is divided into two parts and organized as follows.
The first part is devoted to the interior regularity.
In Section \ref{Sec2}, we state our main results.
In Section \ref{Sec3}, we provide some preliminary lemmas, and in Section \ref{Sec4}, we prove the main results on the interior regularity.
The second part is devoted to the boundary regularity.
In Section \ref{Sec5}, we state the main results.
We prove some preliminary lemmas in Section \ref{Sec6}.
The proofs of main theorems regarding the boundary regularity are given in Section \ref{Sec7}.
In Appendix, we provide the proofs of some technical lemmas.

\part{Interior estimates}		\label{Part1}
This part of the paper is devoted to the interior estimates for Stokes systems.

\section{Main results}		\label{Sec2}

We first fix some notation used throughout the paper.
We use $x=(x_1,x')$ to denote a point in $\bR^d$, where $d\ge 2$.
We also write $y=(y_1,y')$ and $x_0=(x_{01},x_0')$, etc.
Set
$$
B_r(x)=\{y\in \bR^d:|x-y|<r\}, \quad B_r'(x')=\{y'\in \bR^{d-1} : |x'-y'|<r\}.
$$
We use the abbreviations $B_r=B_r(0)$ where $0\in \bR^d$,  and $B_r'=B_r'(0')$ where $0'\in \bR^{d-1}$.
For $k\in \{1,\ldots,d\}$, we denote by $e_k$ the $k$-th unit vector in $\bR^d$.

For $0 < q \le \infty$, let $L^q(\Omega)$ be the space consisting of measurable functions on $\Omega$ that are $q$-th
integrable.
We recall that
$$
\|f+g\|_{L^q(\Omega)}\le C_q \big(\|f\|_{L^q(\Omega)}+\|g\|_{L^q(\Omega)}\big),
$$
where $C_q=2^{(1-q)/q}$ if $0<q<1$ and $C_q=1$ if $1\le q\le \infty$.
When $|\Omega|<\infty$, we define
$$
\tilde{L}^q(\Omega)=\{f\in L^q(\Omega): (f)_\Omega=0\},
$$
where $(f)_\Omega$ is the average of $f$ over $\Omega$, i.e.,
$$
(f)_\Omega=\dashint_\Omega f\,dx=\frac{1}{|\Omega|}\int_\Omega f\,dx.
$$
For $1\le q\le \infty$, we denote by $W^{1,q}(\Omega)$ the usual Sobolev space and by $W^{1,q}_0(\Omega)$ the completion of $C^\infty_0(\Omega)$ in $W^{1,q}(\Omega)$.

For $0<\gamma<1$, the H\"older semi-norm is defined by
$$
[f]_{C^{\gamma}(\Omega)}:=\sup_{\substack{x,y\in \Omega \\ x\neq y}} \frac{|f(x)-f(y)|}{|x-y|^\gamma},
$$
and the partial H\"older semi-norm with respect to $x'$ is defined by
$$
[f]_{C^\gamma_{x'}(\Omega)}:=\sup_{\substack{x,y\in \Omega \\ x_1=y_1,\, x'\neq y'}}\frac{|f(x)-f(y)|}{|x'-y'|^\gamma}.
$$

Let $\cL$ be a strongly elliptic operator of the form
$$
\cL u=D_\alpha (A^{\alpha\beta}D_\beta u),
$$
where the coefficients $A^{\alpha\beta}=A^{\alpha\beta}(x)$ are $d\times d$ matrix-valued functions on $\bR^d$ with
entries $A_{ij}^{\alpha\beta}$ satisfying the strong ellipticity condition,
i.e., there is a constant $\lambda\in (0,1]$ such that for any $x\in \bR^d$ and $ \xi_\alpha \in \bR^d$, $\alpha\in \{1,\ldots,d\}$, we have
\begin{equation}		\label{ell}
|A^{\alpha\beta}(x)|\le \lambda^{-1}, \quad \sum_{\alpha,\beta=1}^dA^{\alpha\beta}(x)\xi_\beta\cdot \xi_\alpha\ge \lambda \sum_{\alpha=1}^d|\xi_\alpha|^2.
\end{equation}
For $f_\alpha\in L^q(\Omega)^d$ with $q\ge 1$, we say that $(u,p)\in W^{1,q}(\Omega)^d\times L^q(\Omega)$ is a weak solution of
$$
\cL u+\nabla p=D_\alpha f_\alpha \quad \text{in }\, \Omega,
$$
if
$$
\int_\Omega A^{\alpha\beta}D_\beta u\cdot D_\alpha \phi \, dx+\int_\Omega p \operatorname{div}\phi\,dx =\int_\Omega f_\alpha \cdot D_\alpha \phi \,dx
$$
holds for $\phi\in C^\infty_0(\Omega)^d$.

\begin{definition}		\label{D1}
Let $f\in L^1(B_6)$.
\begin{enumerate}[$(i)$]
\item
We say that $f$ is of  {\em{Dini mean oscillation in $B_4$}} if the function $\omega_f:(0,1]\to [0,\infty)$ defined by
$$
\omega_{f}(r):=\sup_{x\in B_4} \dashint_{B_r(x)}\bigg|f(y)-\dashint_{B_r(x)}f(z)\, dz\bigg| \,dy
$$
satisfies
$$
\int_0^1 \frac{\omega_f(r)}{r}\,dr <\infty.
$$
\item
We say that $f$ is of {\em{partially Dini mean oscillation with respect to $x'$ in $B_4$}} if the function $\omega_{f,x'}:(0,1]\to [0,\infty)$ defined by
$$
\omega_{f,x'}(r):=\sup_{x\in B_4} \dashint_{B_r(x)}\bigg|f(y)-\dashint_{B_r'(x')}f(y_1,z')\, dz'\bigg| \,dy
$$
satisfies
$$
\int_0^1 \frac{\omega_{f,x'}(r)}{r}\,dr<\infty.
$$
\end{enumerate}

\end{definition}

The main results of the first part of the paper read as follows.

\begin{theorem}		\label{M1}
Let $q_0\in (1,\infty)$.
Assume that $(u,p)\in W^{1,q_0}(B_6)^d\times L^{q_0}(B_6)$ is a weak solution of
\begin{equation}		\label{170907_eq1}
\left\{
\begin{aligned}
\cL u+\nabla p=D_\alpha f_\alpha &\quad \text{in }\, B_6,\\
\operatorname{div} u=g &\quad \text{in }\, B_6,
\end{aligned}
\right.
\end{equation}
where $f_1\in L^\infty(B_6)^d$, $f_\alpha\in L^{q_0}(B_6)^d$, $ \alpha\in \{2,\ldots,d\}$, and $g\in L^\infty(B_6)$.
Set
$$
\hat{U}:=A^{1\beta}D_\beta u+pe_1-f_1.
$$
\begin{enumerate}[$(a)$]
\item
If $A^{\alpha\beta}$,  $f_\alpha$, and $g$ are of partially Dini mean oscillation with respect to $x'$ in $B_4$,
then we have
\begin{equation}		\label{170927@eq5}
(u,p)\in W^{1,\infty}(B_1)^d\times L^\infty(B_1)
\end{equation}
and
$$
\hat{U}, D_\alpha u\in C(\overline{B_1})^d, \quad \alpha\in \{2,\ldots,d\}.
$$
\item
If it holds that $[A^{\alpha\beta}]_{C^{\gamma_0}_{x'}(B_6)}+[f_\alpha]_{C^{\gamma_0}_{x'}(B_6)}+[g]_{C^{\gamma_0}_{x'}(B_6)}<\infty$ for some $\gamma_0\in (0,1)$, then we have \eqref{170927@eq5} and
$$
\hat{U}, D_\alpha u\in C^{\gamma_0}(\overline{B_1})^d, \quad \alpha\in \{2,\ldots,d\}.
$$
\end{enumerate}
\end{theorem}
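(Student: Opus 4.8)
The plan is to follow Campanato's approach, but to iterate the decay of $L^{q_1}$-mean oscillations, for a fixed $q_1\in(0,1)$, of the pair $(\hat U,D_{x'}u)$ rather than of $Du$ itself; in this way the weak type-$(1,1)$ estimate for Stokes systems whose coefficients depend only on $x_1$ takes the place of the $L^q$-estimates with $q>1$, which are not available under the present $L^1$-mean-oscillation hypothesis. The pair $(\hat U,D_{x'}u)$ is the right object for two reasons. First, writing out the first-order block of \eqref{170907_eq1},
\[
D_1\hat U=-\sum_{\alpha=2}^{d}D_\alpha\big(A^{\alpha\beta}D_\beta u-f_\alpha\big)-\sum_{j=2}^{d}(D_jp)\,e_j,
\]
so $D_1\hat U$ is a divergence in the tangential directions only; together with the fact that $D_\alpha\hat U$ and $D_\alpha D_{x'}u$, $\alpha\ge2$, involve only tangential data, this renders $(\hat U,D_{x'}u)$ insensitive to the roughness of the coefficients in $x_1$. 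Second, by the definition of $\hat U$ and by $\operatorname{div}u=g$,
\[
\begin{pmatrix}A^{11}&e_1\\ e_1^{\top}&0\end{pmatrix}\begin{pmatrix}D_1u\\ p\end{pmatrix}=\begin{pmatrix}\hat U+f_1-\sum_{\beta\ge2}A^{1\beta}D_\beta u\\[1mm]g-\sum_{\alpha\ge2}D_\alpha u^{\alpha}\end{pmatrix},
\]
and by \eqref{ell} the $(d+1)\times(d+1)$ matrix on the left is invertible with the norm of its inverse bounded by $C(d,\lambda)$ (if it kills $(\xi,t)$ then $\xi=-t(A^{11})^{-1}e_1$ and $0=\xi^1=-t\,A^{11}\eta\cdot\eta$ with $\eta=(A^{11})^{-1}e_1\neq0$, forcing $t=0$, $\xi=0$). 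Hence once $\hat U$ and $D_{x'}u$ are shown bounded, resp.\ continuous, resp.\ $C^{\gamma_0}$, the boundedness of $D_1u$ and $p$ follows, giving \eqref{170927@eq5}, while the continuity (H\"older) of $D_\alpha u$, $\alpha\ge2$, is immediate; $D_1u$ and $p$ need not be continuous because $A^{\alpha\beta}$ is merely measurable in $x_1$.

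The preliminary facts I would use are: (i) interior $L^q$-solvability and estimates for \eqref{170907_eq1} with partially BMO coefficients, from \cite{arXiv:1604.02690v2,MR3758532,MR3809039}; (ii) a weak type-$(1,1)$ bound: for $\bar\cL w+\nabla\pi=D_\alpha F_\alpha$, $\operatorname{div}w=G$ in a ball with $\bar\cL$ having $x_1$-dependent coefficients and $F_\alpha,G\in L^1$, the pair $(Dw,\pi-(\pi)_{B})$ lies in weak-$L^1$, so $\big(\dashint_{B}(|Dw|^{q_1}+|\pi-(\pi)_B|^{q_1})\big)^{1/q_1}\le C\dashint_{B}(|F_\alpha|+|G|)$ for any $q_1\in(0,1)$; (iii) a model estimate: if $(v,\bar p)$ solves $\bar\cL v+\nabla\bar p=D_\alpha\bar f_\alpha$, $\operatorname{div}v=\bar g$ in $B_R$ with $\bar A$, $\bar f_\alpha$, $\bar g$ depending only on $x_1$, then $\hat V:=\bar A^{1\beta}D_\beta v+\bar p\,e_1-\bar f_1$ and $D_{x'}v$ are locally H\"older continuous and
\[
\inf_{c}\Big(\dashint_{B_\rho}\big(|\hat V-c|^{2}+|D_{x'}v-c|^{2}\big)\Big)^{1/2}\le C\Big(\frac{\rho}{R}\Big)^{\mu}\Big(\dashint_{B_R}\big(|\hat V|^{2}+|D_{x'}v|^{2}\big)\Big)^{1/2}
\]
for some $\mu\in(0,1)$, proved by differentiating the model system in $x'$ — legitimate since $\bar A$, $\bar f$, $\bar g$ are independent of $x'$ — and invoking De Giorgi--Nash--Moser type estimates (an idea going back to \cite{MR2927619}); and (iv) Bogovskii's operator to correct the divergence in the comparison step.

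For the iteration, fix $x_0\in\overline{B_2}$ and $0<\rho<r<r_0$; set $\bar A^{\alpha\beta}(x_1)=\dashint_{B_r'(x_0')}A^{\alpha\beta}(x_1,\cdot)\,dx'$, define $\bar f_\alpha$, $\bar g$ analogously, solve the frozen model problem for $(v,\bar p)$ on $B_r(x_0)$ with boundary data $u$ (divergence matched up to a Bogovskii correction), and put $(w,\pi)=(u-v,p-\bar p)$, which solves $\bar\cL w+\nabla\pi=D_\alpha[(f_\alpha-\bar f_\alpha)-(A^{\alpha\beta}-\bar A^{\alpha\beta})D_\beta u]$, $\operatorname{div}w=g-\bar g$ (mod the correction), with zero boundary data. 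Applying (ii) to $(w,\pi)$ — using $\dashint_{B_r(x_0)}|(A^{\alpha\beta}-\bar A^{\alpha\beta})D_\beta u|\le\|Du\|_{L^\infty(B_r(x_0))}\,\omega_{A,x'}(r)$ and $\dashint_{B_r(x_0)}|f_\alpha-\bar f_\alpha|\le\omega_{f_\alpha,x'}(r)$, $\dashint_{B_r(x_0)}|g-\bar g|\le\omega_{g,x'}(r)$ from Definition \ref{D1} — and combining with (iii) for $(v,\bar p)$ via the quasi-triangle inequality in $L^{q_1}$ (note $\hat U-\hat V=\bar A^{1\beta}D_\beta w+\pi e_1+(A^{1\beta}-\bar A^{1\beta})D_\beta u-(f_1-\bar f_1)$ and $D_{x'}u-D_{x'}v=D_{x'}w$), one obtains
\[
\phi(x_0,\rho)\le C\Big(\frac{\rho}{r}\Big)^{\mu}\phi(x_0,r)+C\big[\omega_{A,x'}(r)\,\|Du\|_{L^\infty(B_r(x_0))}+\omega_{f_\alpha,x'}(r)+\omega_{g,x'}(r)\big],
\]
where $\phi(x_0,r):=\inf_{\hat c,c'}\big(\dashint_{B_r(x_0)}(|\hat U-\hat c|^{q_1}+|D_{x'}u-c'|^{q_1})\big)^{1/q_1}$.

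Summing this over a geometric sequence of radii and telescoping the minimizing constants, the Dini summability of $\omega_{A,x'}$, $\omega_{f_\alpha,x'}$, $\omega_{g,x'}$ controls $|(\hat U)_{B_r(x_0)}|$ and $|(D_{x'}u)_{B_r(x_0)}|$, hence — through the algebraic recovery of the first paragraph — $\|Du\|_{L^\infty(B_r(x_0))}$ is bounded by $C(\|Du\|_{L^{q_0}(B_6)}+\|f_1\|_{L^\infty}+\|g\|_{L^\infty}+\text{Dini sums})+C\big(\int_0^{r_0}\tfrac{\omega_{A,x'}(s)}{s}\,ds\big)\|Du\|_{L^\infty(B_{2}(x_0))}$; choosing $r_0$ small the last term is absorbed, which gives \eqref{170927@eq5}. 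Feeding the bound back, $\phi(x_0,r)\to0$ as $r\to0$ with a rate governed by $\int_0^r(\omega_{A,x'}+\omega_{f_\alpha,x'}+\omega_{g,x'})(s)\,s^{-1}\,ds$, uniformly in $x_0$, so a Campanato-type characterization for $L^{q_1}$-mean oscillations yields $\hat U,D_{x'}u\in C(\overline{B_1})$, proving $(a)$; for $(b)$, the hypotheses give $\omega_{\bullet,x'}(r)\le Cr^{\gamma_0}$, and taking $\mu>\gamma_0$ the same iteration yields $\phi(x_0,r)\le Cr^{\gamma_0}$, whence $\hat U,D_{x'}u\in C^{\gamma_0}(\overline{B_1})$. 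The main obstacle is closing this loop: because only the $L^1$-mean oscillation of the coefficients in $x'$ is assumed, $\|Du\|_{L^\infty}$ is unavoidably produced on the right side of the comparison estimate and must be reabsorbed using the smallness of the Dini tail of $\omega_{A,x'}$ — which is precisely why one must work with $q_1\in(0,1)$ and the weak type-$(1,1)$ estimate rather than with $L^q$, $q\ge1$, oscillations; a further difficulty absent in the elliptic case is propagating the pressure and the divergence constraint through the comparison, i.e.\ establishing the model estimate (iii) for $x_1$-dependent-coefficient Stokes systems and arranging the Bogovskii correction so that (ii) genuinely applies to $(w,\pi)$.
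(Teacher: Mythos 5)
Your proposal follows the paper's proof in all essential respects: a Campanato iteration on $L^{q_1}$-mean oscillations of the pair $(\hat U, D_{x'}u)$ with $q_1\in(0,1)$, the weak type-$(1,1)$ estimate for Stokes systems with coefficients depending only on $x_1$ serving in place of $L^q$ estimates for the perturbation part, the oscillation-decay estimate for the $x_1$-dependent model system obtained by differentiating tangentially, the algebraic recovery of $D_1u$ and $p$ from $(\hat U, D_{x'}u, f_1, g)$ via the nondegenerate $(d+1)\times(d+1)$ block (the paper uses exactly this algebra to build the auxiliary $x_1$-dependent pair $(u_0,p_0)$ it subtracts from $(u,p)$), and closure by absorbing $\|Du\|_{L^\infty}$ through the smallness of the Dini tail of $\omega_{A,x'}$. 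The only deviations are cosmetic: you match boundary data to $u$ with a Bogovskii correction, whereas the paper truncates the right-hand side to $B_r(x_0)$ and solves the corrector on $B_{4r}(x_0)$ with zero boundary data (so that the weak type-$(1,1)$ lemma applies directly), and it invokes energy/Lipschitz interior estimates for the $x'$-differentiated model system rather than De Giorgi--Nash--Moser (which does not apply directly to systems); in addition the paper notes the a priori smoothness reduction which you left implicit.
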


\begin{theorem}		\label{M0}
Let $q_0\in (1,\infty)$.
Assume that  $(u,p)\in W^{1,q_0}(B_6)^d\times {L}^{q_0}(B_6)$ is a weak solution of \eqref{170907_eq1}, where $f_\alpha\in L^{q_0}(B_6)^d$ and $g\in L^{q_0}(B_6)$.
\begin{enumerate}[$(a)$]
\item
If $A^{\alpha\beta}$,  $f_\alpha$, and $g$ are of Dini mean oscillation in $B_4$,
then we have
$$
(u,p)\in C^1(\overline{B_1})^d\times C(\overline{B_1}).
$$
\item
If it holds that $[A^{\alpha\beta}]_{C^{\gamma_0}(B_6)}+[f_\alpha]_{C^{\gamma_0}(B_6)}+[g]_{C^{\gamma_0}(B_6)}<\infty$ for some $\gamma_0\in (0,1)$, then we have
$$
(u,p)\in C^{1,{\gamma_0} }(\overline{B_1})^d\times C^{\gamma_0}(\overline{B_1}).
$$
\end{enumerate}
\end{theorem}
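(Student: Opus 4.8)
\emph{Proof proposal for Theorem~\ref{M0}.}

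The plan is to obtain Theorem~\ref{M0} as a consequence of Theorem~\ref{M1}, applied both in the original coordinates and in one rotated coordinate system. Two elementary observations drive the reduction. First, a function of Dini mean oscillation in $B_4$ coincides a.e.\ with a function that is bounded and uniformly continuous on a neighborhood of $\overline{B_1}$ (the usual dyadic averaging estimate combined with \eqref{171219@eq5}); hence, under the hypotheses of Theorem~\ref{M0}(a), all of $A^{\alpha\beta}$, $f_\alpha$, $g$ are in $L^\infty$ near $\overline{B_1}$ and continuous there, and Dini mean oscillation in all directions implies partial Dini mean oscillation with respect to $x'$. Since Theorem~\ref{M1} is, after translation and dilation, a statement about a neighborhood of a point, it applies here and yields $(u,p)\in W^{1,\infty}(B_1)^d\times L^\infty(B_1)$ together with $\hat U, D_\alpha u\in C(\overline{B_1})^d$ for $\alpha\in\{2,\dots,d\}$. (For part~(b) one gets the corresponding $C^{\gamma_0}$ conclusions, the needed $L^\infty$ bounds being immediate from finiteness of the $C^{\gamma_0}(B_6)$ seminorms, and $[\cdot]_{C^{\gamma_0}(B_6)}<\infty$ trivially implying $[\cdot]_{C^{\gamma_0}_{x'}(B_6)}<\infty$.)

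The second observation is that the classes appearing in Theorem~\ref{M0} — Dini mean oscillation in all directions, and finite $C^{\gamma_0}$ seminorm — are invariant under an orthogonal change of variables $x\mapsto Rx$, because balls map to balls and averages are preserved. Concretely, put $\tilde u(x)=R^\top u(Rx)$ and $\tilde p(x)=p(Rx)$; a direct computation shows that $(\tilde u,\tilde p)$ is a weak solution on $B_6$ of a Stokes system $D_\alpha(\tilde A^{\alpha\beta}D_\beta\tilde u)+\nabla\tilde p=D_\alpha\tilde f_\alpha$, $\operatorname{div}\tilde u=\tilde g$, whose coefficients $\tilde A^{\alpha\beta}$ are fixed linear combinations of the $A^{\gamma\delta}(R\,\cdot)$ — hence strongly elliptic in the sense of \eqref{ell} with a constant depending only on $d$ and $\lambda$, and again of Dini mean oscillation in all directions (resp.\ with finite $C^{\gamma_0}$ seminorm) — while $\tilde f_\alpha$ and $\tilde g$ are linear combinations of $f_\beta(R\,\cdot)$ and $g(R\,\cdot)$ of the same regularity. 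Taking $R$ to be the orthogonal transformation interchanging the $x_1$- and $x_2$-axes and applying Theorem~\ref{M1} to $(\tilde u,\tilde p)$, we obtain $D_2\tilde u\in C(\overline{B_1})^d$ (resp.\ $\in C^{\gamma_0}(\overline{B_1})^d$); since $D_2\tilde u(x)=R^\top (D_1u)(Rx)$ and $R(\overline{B_1})=\overline{B_1}$, this is precisely the continuity (resp.\ $C^{\gamma_0}$-regularity) of $D_1u$ on $\overline{B_1}$. Combined with the previous step, $Du\in C(\overline{B_1})$ (resp.\ $C^{\gamma_0}(\overline{B_1})$), i.e.\ $u\in C^1(\overline{B_1})^d$ (resp.\ $u\in C^{1,\gamma_0}(\overline{B_1})^d$).

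It remains to treat $p$. From the definition $\hat U=A^{1\beta}D_\beta u+pe_1-f_1$ we read off, in the first component, $p=\hat U^1-(A^{1\beta}D_\beta u)^1+f_1^1$ in $B_1$. In case~(a), $\hat U^1$ and $f_1^1$ are continuous on $\overline{B_1}$ (being of Dini mean oscillation), the $A^{1\beta}$ are continuous there for the same reason, and each $D_\beta u$ is now continuous on $\overline{B_1}$; hence $p\in C(\overline{B_1})$, which proves part~(a). In case~(b), the same quantities lie in $C^{\gamma_0}(\overline{B_1})$ and, since a product of bounded $C^{\gamma_0}$ functions on a bounded set is again $C^{\gamma_0}$, we conclude $p\in C^{\gamma_0}(\overline{B_1})$, which proves part~(b).

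Since all of the genuine analysis is already contained in Theorem~\ref{M1}, there is no serious obstacle here; the one point requiring care is the routine bookkeeping for the orthogonal change of variables, namely checking that \eqref{170907_eq1} is transformed into a Stokes system of exactly the same form, with coefficients and data remaining in the relevant regularity classes and the ellipticity constant controlled. This works precisely because, unlike in Theorem~\ref{M1}, in Theorem~\ref{M0} the coefficients are assumed regular in \emph{every} direction, so rotating the coordinates destroys none of the hypotheses.
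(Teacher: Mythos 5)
Your proposal is correct, and it takes a genuinely different route from the paper's own proof. The paper in fact flags your route in the paragraph following Theorem~\ref{M0} ("Theorem~\ref{M0} is a special case of Theorem~\ref{M1} because any function that is of Dini mean oscillation \dots is of partially Dini mean oscillation \dots with respect to any $(d-1)$-directions"), but then deliberately \emph{does not} prove it that way: instead, in Section~\ref{170929@sec1} it reruns the Campanato iteration from scratch with the full-oscillation quantity
$\Phi(x_0,r)=\inf_{\theta,\Theta}\bigl(\dashint_{B_r(x_0)}|Du-\Theta|^q+|p-\theta|^q\,dx\bigr)^{1/q}$,
the pure $L^q$-mean-oscillation estimate \eqref{171110@eq1a} for constant-coefficient systems, and Lemma~\ref{171020@lem10z}, which is the full-oscillation twin of Lemma~\ref{171020@lem1z}. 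Your argument instead applies Theorem~\ref{M1} twice — once in the original frame to get $D_\alpha u$, $\alpha\ge 2$, and $\hat U$ continuous (resp.\ $C^{\gamma_0}$), and once after the orthogonal swap of $x_1$ and $x_2$ to obtain continuity (resp.\ $C^{\gamma_0}$-regularity) of $D_1 u$ — and then reads off $p$ from the first component of $\hat U$. This is shorter, but it comes at a cost that the paper explicitly mentions as its reason for writing the direct proof: the a priori estimates inherited from Theorem~\ref{M1}, such as \eqref{170925_eq1} and \eqref{171128@A1}, necessarily carry the $L^\infty$-norms of $f_1$ and $g$ on the right-hand side, whereas the paper's direct proof produces estimates \eqref{171024@eq5b} and \eqref{171121@eq3} in which only Dini/H\"older seminorm quantities of the data appear (no $L^\infty$-norms), which is the sharper and more natural bound when all the data are of Dini mean oscillation.

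Two small bookkeeping points to keep straight if you want your argument to stand on its own. First, Theorem~\ref{M1} is stated with $f_1\in L^\infty(B_6)^d$ and $g\in L^\infty(B_6)$, whereas Dini mean oscillation in $B_4$ only gives you continuity (hence boundedness) on a compact subset of $B_5$; so, as you indicate, you have to invoke Theorem~\ref{M1} on a dilated/translated configuration $B_{6r}(z)\subset B_3$ with $z\in B_1$ and $r$ small, and then cover $B_1$ — this is routine but should be said. Second, the verification that the rotated pair $(\tilde u,\tilde p)=(R^\top u(R\,\cdot),p(R\,\cdot))$ solves a Stokes system of the same form with $\tilde A^{\alpha\beta}_{ij}(x)=R_{\gamma\alpha}R_{\delta\beta}R_{ki}R_{lj}A^{\gamma\delta}_{kl}(Rx)$, $\tilde f^i_\alpha(x)=R_{\gamma\alpha}R_{ji}f^j_\gamma(Rx)$, $\tilde g(x)=g(Rx)$, and ellipticity constant unchanged, is exactly the "routine bookkeeping" you mention; it hinges on $\operatorname{div}_x\tilde\phi=\operatorname{div}_y\phi\circ R$ for the transformed test field $\tilde\phi=R^\top\phi(R\,\cdot)$, which is what makes the pressure term transform correctly. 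Both points are fine as sketched; neither is a gap.
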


Theorem \ref{M0} is a special case of Theorem \ref{M1} because any function that is of Dini mean oscillation (resp. H\"older continuous) is of partially Dini mean oscillation (resp. partially H\"older continuous) with respect to any $(d-1)$-directions.
However, we provide in Section \ref{170929@sec1} the major steps of the proof of Theorem \ref{M0} showing that the estimates do not involve the $L^\infty$-norms of data.
On the other hand, the estimates in the proof of Theorem \ref{M1} include the $L^\infty$-norms of $f_1$ and $g$.

\begin{remark}		\label{171008@rmk1}
One can easily extend the results in Theorems \ref{M1} and \ref{M0} to a solution $(u,p)$ of the system
$$
\left\{
\begin{aligned}
\cL u+\nabla p=f+D_\alpha f_\alpha &\quad \text{in }\, B_6,\\
\operatorname{div} u=g &\quad \text{in }\, B_6,
\end{aligned}
\right.
$$
where $f\in L^q(B_6)^d$ with $q>d$.
Indeed, by \cite[Lemma 3.1]{MR3809039}, there exist $\hat{f}_\alpha\in W^{1,q}(B_6)^d$, $\alpha\in \{1,\ldots,d\}$, such that $D_\alpha \hat{f}_\alpha=f$, which implies that $(u,p)$ is a weak solution of \eqref{170907_eq1} with $\hat{f}_\alpha+f_\alpha$ in place of $f_\alpha$.
Moreover, by the Morrey-Sobolev inequality, we have that $\hat{f}_\alpha\in C^{\gamma_0}(\overline{B_6})^d$ with $\gamma_0=1-\frac{d}{q}>0$, and thus $\hat{f}_\alpha$ are of Dini mean oscillation.
\end{remark}

Based on a duality argument and Theorem \ref{M1} $(a)$,
we obtain the following $L^{q_0}$-estimate for $W^{1,1}$-weak solutions.

\begin{theorem}		\label{M3}
Let $q_0\in (1,\infty)$.
Assume that $(u,p)\in W^{1,1}(B_6)^d\times L^1(B_6)$ is a weak solution of \eqref{170907_eq1}, where $f_\alpha\in L^{q_0}(B_6)^d$ and $g\in L^{q_0}(B_6)$.
If $A^{\alpha\beta}$ are of partially Dini mean oscillation with respect to $x'$ in $B_4$, then we have $(u,p)\in W^{1,q_0}(B_1)^d\times L^{q_0}(B_1)$ with the estimate
\begin{align*}
&\|u\|_{W^{1,q_0}(B_1)}+\|p\|_{L^{q_0}(B_1)} \\
&\le C\big( \|u\|_{W^{1,1}(B_6)}+\|p\|_{L^1(B_6)}+\|f_\alpha\|_{L^{q_0}(B_6)}+\|g\|_{L^{q_0}(B_6)}\big),
\end{align*}
where the constant $C$ depends only on $d$, $\lambda$, $\omega_{A^{\alpha\beta},x'}$, and $q_0$.
\end{theorem}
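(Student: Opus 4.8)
The plan is to argue by duality against the adjoint Stokes system, using the interior $W^{1,\infty}$-regularity of Theorem \ref{M1}$(a)$ to compensate for the fact that $(u,p)$ is only assumed to lie in $W^{1,1}(B_6)^d\times L^1(B_6)$. Write $q_0'=q_0/(q_0-1)$ and
\[
\mathcal N:=\|u\|_{W^{1,1}(B_6)}+\|p\|_{L^1(B_6)}+\|f_\alpha\|_{L^{q_0}(B_6)}+\|g\|_{L^{q_0}(B_6)}.
\]
Since $C_0^\infty(B_1)$ is dense in $L^{q_0'}(B_1)$, since the mean-zero elements of $C_0^\infty(B_1)$ are dense in $\{h\in L^{q_0'}(B_1):\int_{B_1}h=0\}$, and since $u\in W^{1,1}(B_6)$ and $p\in L^1(B_6)$ make the pairings below finite, it is enough to prove
\[
\Big|\int_{B_1}\Phi^{i\alpha}D_\alpha u^i\,dx+\int_{B_1}p\,\psi\,dx\Big|\le C\,\mathcal N\,\big(\|\Phi\|_{L^{q_0'}(B_1)}+\|\psi\|_{L^{q_0'}(B_1)}\big)
\]
for all $\Phi=(\Phi^{i\alpha})\in C_0^\infty(B_1)^{d\times d}$ and all $\psi\in C_0^\infty(B_1)$ with $\int_{B_1}\psi=0$, with $C=C(d,\lambda,\omega_{A^{\alpha\beta},x'},q_0)$. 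Indeed, taking $\psi\equiv0$ gives $Du\in L^{q_0}(B_1)$ with the asserted bound; taking $\Phi\equiv0$ gives $p-(p)_{B_1}\in L^{q_0}(B_1)$ with the asserted bound, hence $p\in L^{q_0}(B_1)$ since $|(p)_{B_1}|\le C\|p\|_{L^1(B_6)}$; and a standard iteration of the Sobolev inequality upgrades $u\in L^1(B_1)$ with $Du\in L^{q_0}(B_1)$ to $u\in L^{q_0}(B_1)$ with $\|u\|_{L^{q_0}(B_1)}\le C(\|Du\|_{L^{q_0}(B_1)}+\|u\|_{L^1(B_1)})$.

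To set up the dual problem, let $\tilde A^{\alpha\beta}:=(A^{\beta\alpha})^\top$; these satisfy \eqref{ell} with the same $\lambda$ and have $\omega_{\tilde A^{\alpha\beta},x'}=\omega_{A^{\beta\alpha},x'}$, so they are again of partially Dini mean oscillation with respect to $x'$ in $B_4$. Using the $L^{q_0'}$-solvability of the Stokes Dirichlet problem for coefficients of partially small (in particular partially Dini) mean oscillation (cf. \cite{arXiv:1604.02690v2,MR3758532,MR3809039}), I would take the weak solution $(v,\pi)\in W^{1,q_0'}_0(B_6)^d\times\tilde L^{q_0'}(B_6)$ of
\[
\left\{
\begin{aligned}
D_\alpha(\tilde A^{\alpha\beta}D_\beta v)+\nabla\pi=D_\alpha\Phi^{\cdot\alpha} &\quad\text{in }\,B_6,\\
\operatorname{div}v=\psi &\quad\text{in }\,B_6,\\
v=0 &\quad\text{on }\,\partial B_6,
\end{aligned}
\right.
\]
where $\Phi^{\cdot\alpha}=(\Phi^{i\alpha})_{i=1}^{d}$ and the compatibility condition $\int_{B_6}\psi=\int_{B_1}\psi=0$ holds; it satisfies $\|v\|_{W^{1,q_0'}(B_6)}+\|\pi\|_{L^{q_0'}(B_6)}\le C(\|\Phi\|_{L^{q_0'}(B_1)}+\|\psi\|_{L^{q_0'}(B_1)})$. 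Since $\Phi^{\cdot\alpha}$ and $\psi$ are smooth and compactly supported, hence bounded and of partially Dini mean oscillation, a covering and scaling argument based on Theorem \ref{M1}$(a)$ gives $(v,\pi)\in W^{1,\infty}(B_3)^d\times L^\infty(B_3)$. The crucial additional observation is that $\Phi^{\cdot\alpha}$ and $\psi$ vanish on $B_6\setminus B_1$, so $(v,\pi)$ solves there the homogeneous adjoint system; applying the interior estimate underlying Theorem \ref{M1}$(a)$ on a finite cover of the annulus $\cA:=B_3\setminus B_2$ by small balls whose sixfold dilations lie in $B_{7/2}\setminus B_{3/2}$, and using connectedness of $\cA$ to normalize the pressure constant, I would obtain
\[
\|v\|_{W^{1,\infty}(\cA)}+\|\pi-(\pi)_{\cA}\|_{L^\infty(\cA)}\le C\big(\|Dv\|_{L^{q_0'}(B_6)}+\|\pi\|_{L^{q_0'}(B_6)}\big)\le C\big(\|\Phi\|_{L^{q_0'}(B_1)}+\|\psi\|_{L^{q_0'}(B_1)}\big).
\]

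Next, fix $\eta\in C_0^\infty(B_3)$ with $\eta\equiv1$ on $B_2$, so that $\operatorname{supp}\nabla\eta\subset\cA$ and $\eta\equiv1$ on $\operatorname{supp}\Phi\cup\operatorname{supp}\psi$. Because $v\in W^{1,\infty}(B_3)$ and $\pi\in L^\infty(B_3)$, a mollification argument (using $u\in W^{1,1}(B_6)$ and $p\in L^1(B_6)$) shows that $\eta v$ is an admissible test function in the weak formulation of \eqref{170907_eq1} for $(u,p)$, and that $\eta u$ is an admissible test function in the weak formulation of the adjoint system for $(v,\pi)$. Inserting these, expanding the products and divergences, using $\operatorname{div}u=g$, $\operatorname{div}v=\psi$, $\eta\equiv1$ on $\operatorname{supp}\Phi\cup\operatorname{supp}\psi$, and the pointwise identity $A^{\alpha\beta}D_\beta u\cdot D_\alpha v=\tilde A^{\alpha\beta}D_\beta v\cdot D_\alpha u$, the two interior quadratic terms cancel and one is left with an identity of the form
\begin{multline*}
\int_{B_1}\Phi^{i\alpha}D_\alpha u^i+\int_{B_1}p\,\psi
=\int_{B_6}\eta\,f_\alpha\cdot D_\alpha v+\int_{B_6}\eta\,\pi\,g+\int_{B_6}\pi\,(\nabla\eta\cdot u)\\
-\int_{B_6}p\,(\nabla\eta\cdot v)+\int_{B_6}(D_\alpha\eta)\big(f_\alpha\cdot v+\tilde A^{\alpha\beta}D_\beta v\cdot u-A^{\alpha\beta}D_\beta u\cdot v\big).
\end{multline*}
In the terms carrying the factor $\eta$ but not $\nabla\eta$, H\"older's inequality pairs $f_\alpha$ or $g$ (in $L^{q_0}(B_6)$) with $Dv$ or $\pi$ (in $L^{q_0'}(B_6)$), so the global estimate bounds them by $C\mathcal N(\|\Phi\|_{L^{q_0'}(B_1)}+\|\psi\|_{L^{q_0'}(B_1)})$. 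The terms carrying $\nabla\eta$ are supported in $\cA$, where H\"older's inequality pairs $Du$, $p$, or $u$ (in $L^1(B_6)$) with $v$ or $Dv$ (in $L^\infty(\cA)$), controlled by the annular estimate; for $\int_{B_6}\pi\,(\nabla\eta\cdot u)$ one first writes $\pi=(\pi-(\pi)_{\cA})+(\pi)_{\cA}$, estimates the first summand via the annular estimate and $\|u\|_{L^1(B_6)}$, and for the second uses $\int_{B_6}\nabla\eta\cdot u=-\int_{B_6}\eta\,g$ together with $|(\pi)_{\cA}|\le C\|\pi\|_{L^{q_0'}(B_6)}$. This proves the displayed inequality, and with it the theorem.

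The hard part is the mismatch between the mere $W^{1,1}$-regularity of $(u,p)$ and the integrations by parts forced by the duality: the adjoint solution produced by the $L^{q_0'}$-theory lies only in $W^{1,q_0'}_0(B_6)$, whose gradient cannot be paired against $Du\in L^1$. The resolution is to cut off the adjoint solution and exploit that on $\operatorname{supp}\nabla\eta$ the adjoint equation is homogeneous, so that Theorem \ref{M1}$(a)$ promotes $v$ to $W^{1,\infty}$ there with a bound depending only on $\|\Phi\|_{L^{q_0'}}+\|\psi\|_{L^{q_0'}}$---and not on $L^\infty$-norms of $\Phi,\psi$, which would be useless after taking suprema over the test functions. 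The remaining points, namely the density/mollification arguments that legitimize $\eta v$ and $\eta u$ as test functions and the treatment of the additive constant of the pressure on $\cA$, are routine, the latter handled via the constraint $\operatorname{div}u=g$.
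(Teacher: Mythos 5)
Your duality argument is correct in outline and takes a genuinely different and cleaner route than the paper's proof of Theorem~\ref{M3}. The paper first solves the adjoint problem in $W^{1,q_0'}_0(B_6)$ and then, because the resulting solution only lies in $W^{1,q_0'}$, must split into the case $q_0'>d$ (where Morrey--Sobolev gives $v\in L^\infty$ and $Du\cdot Dv$ pairs via $W^{1,1}\hookrightarrow L^{d/(d-1)}$) and the case $q_0'\le d$, for which it iterates the estimate \eqref{171002@eq5} along a chain of Sobolev exponents $q_i=\frac{q_0 d}{d+q_0 i}$ until the exponent drops below $\frac{d}{d-1}$. Your single observation that $\Phi$ and $\psi$ vanish on $B_6\setminus B_1$, so the adjoint system is \emph{homogeneous} wherever $\nabla\eta\neq0$, hands you $(v,\pi)\in W^{1,\infty}(\cA)\times L^\infty(\cA)$ with a bound in terms of $\|Dv\|_{L^{q_0'}}+\|\pi\|_{L^{q_0'}}$ only. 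Consequently every annular term in your identity pairs $L^1$-quantities against $L^\infty(\cA)$-quantities, every interior term pairs $L^{q_0}$ against $L^{q_0'}$ by H\"older, and the whole estimate works uniformly for all $q_0\in(1,\infty)$ with no bootstrap. The paper sets up precisely this kind of annular geometry (its cutoff $\zeta$ has $\operatorname{supp}\nabla\zeta\subset B_{5/2}\setminus B_2$, where the adjoint data also vanish) but does not exploit the homogeneity, which is why it still needs the iteration. Incidentally, your subtraction of $(\pi)_\cA$ is unnecessary: $\pi$ is a fixed global function in $\tilde L^{q_0'}(B_6)$, and the scaled interior estimate on each small ball directly bounds $\|\pi\|_{L^\infty(\cA)}$ by $\|\pi\|_{L^{q_0'}(B_6)}$ with no normalization ambiguity.

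There is, however, a genuine gap in the setup: you cannot invoke $W^{1,q_0'}_0(B_6)$-solvability of the adjoint Dirichlet problem on $B_6$ with the original (transposed) coefficients $\tilde A^{\alpha\beta}=(A^{\beta\alpha})^{\top}$, because the hypothesis of Theorem~\ref{M3} only controls the partial mean oscillation of $A^{\alpha\beta}$ in $B_4$; near $\partial B_6$ the coefficients are arbitrary measurable and the quoted solvability theorems (\cite[Theorem 2.4]{MR3758532}) require a smallness condition throughout the domain. The paper fixes this by replacing $A^{\alpha\beta}$ with $\tilde A^{\alpha\beta}_{ij}=\eta A^{\alpha\beta}_{ij}+\lambda(1-\eta)\delta_{\alpha\beta}\delta_{ij}$ where $\eta\equiv1$ on $B_{5/2}$ and $\operatorname{supp}\eta\subset B_3$, and verifies that the modified coefficients are of partial Dini mean oscillation on all of $B_6$. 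You need the same modification, and once it is in place you must choose your cutoff so that $\operatorname{supp}\nabla\eta$ lies inside the region where the modified and original coefficients agree; otherwise the cancellation $\int\eta\,A^{\alpha\beta}D_\beta u\cdot D_\alpha v=\int\eta\,\tilde A^{\alpha\beta}_*D_\beta v\cdot D_\alpha u$ fails on the overlap. Concretely, take the coefficient modification to equal the original on $B_{5/2}$ and the cutoff with $\eta\equiv1$ on $B_2$, $\operatorname{supp}\eta\subset B_{5/2}$, so the annulus is $B_{5/2}\setminus B_2$; the rest of your argument then goes through unchanged and still avoids the bootstrap.
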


\begin{remark}		\label{171218@rmk1}
By Theorem \ref{M3} and a covering argument, we can see that the results in Theorems \ref{M1} and \ref{M0} still hold under the assumption that $(u,p)\in W^{1,1}(B_6)^d\times L^1(B_6)$.
\end{remark}

We also prove the following local weak type-$(1,1)$ estimate.

\begin{theorem}		\label{M2}
Define a bounded linear operator $T_0$ on $L^2(B_6)^{d\times d}\times L^2(B_6)$ by
$$
T_0(f_1,\ldots,f_d,g)=(D_1u, \ldots, D_du,p),
$$
where $(u, p)\in W^{1,2}_0(B_6)^d\times \tilde{L}^2(B_6)$ is a unique weak solution of
\begin{equation}		\label{171122@eq2}
\left\{
\begin{aligned}
\cL u +\nabla p=D_\alpha f_\alpha &\quad \text{in }\, B_6,\\
\operatorname{div} u=g-(g)_{B_6} &\quad \text{in }\, B_6.
\end{aligned}
\right.
\end{equation}
If $A^{\alpha\beta}$ are of partially Dini mean oscillation with respect to $x'$ in $B_4$ and
$$
\omega_{A^{\alpha\beta},x'}(r)\le C_0 \bigg(\ln \frac{r}{4}\bigg)^{-2}, \quad \forall r\in (0,1],
$$
then the operator $T_0$ can be extended on
$$
\big\{(f_1,\ldots,f_d,g)\in L^1(B_6)^{d\times d}\times L^1(B_6): \operatorname{supp}(f_1,\ldots,f_d,g)\subset B_1\big\}
$$
such that for any $t>0$, we have
$$
\big|\{x\in B_1: |T_0 (f_1,\ldots,f_d,g)|>t\}\big|\le \frac{C}{t}\int_{B_1}\big(|f_\alpha|+|g|\big)\,dx,
$$
where the constant $C$ depends only on $d$, $\lambda$, $\omega_{A^{\alpha\beta},x'}$, and $C_0$.
\end{theorem}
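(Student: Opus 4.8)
\emph{Proof proposal.} The plan is to run the Calder\'on--Zygmund scheme of \cite{MR3620893,MR3747493}: one combines the $L^2$-boundedness of $T_0$, which is part of the hypothesis, with a cancellation estimate for the solution operator outside the support of a datum of vanishing mean. Fix $t>0$; it suffices to prove the bound for data $F=(f_1,\dots,f_d,g)$ supported in $B_1$ and lying in $L^2(B_6)^{d\times d}\times L^2(B_6)$, the extension to general $L^1$ data following by density once the estimate is established with a constant independent of $F$. A Calder\'on--Zygmund decomposition of $F$ at height $t$, performed on a dyadic mesh adapted to $B_1$, gives $F=\Phi+\sum_k B_k$ with $\|\Phi\|_{L^\infty}\le Ct$, each $B_k$ supported in a cube $Q_k$ with $\int_{Q_k}B_k=0$ and $\|B_k\|_{L^1}\le Ct|Q_k|$, the $Q_k$ of bounded overlap, and $\sum_k|Q_k|\le Ct^{-1}\int_{B_1}(|f_\alpha|+|g|)$. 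The good part $\Phi$ is handled by Chebyshev's inequality and $L^2$-boundedness: $\|\Phi\|_{L^2}^2\le Ct\|\Phi\|_{L^1}\le Ct\int_{B_1}(|f_\alpha|+|g|)$, whence $|\{x\in B_1:|T_0\Phi|>t/2\}|\le Ct^{-2}\|\Phi\|_{L^2}^2\le Ct^{-1}\int_{B_1}(|f_\alpha|+|g|)$. With $\mathcal O:=\bigcup_k(\kappa Q_k\cap B_6)$ for a sufficiently large fixed dilation factor $\kappa$, one has $|\mathcal O|\le Ct^{-1}\int_{B_1}(|f_\alpha|+|g|)$, and a second application of Chebyshev's inequality on $B_1\setminus\mathcal O$, together with $|T_0(\sum_kB_k)|\le\sum_k|T_0B_k|$ a.e., reduces the whole theorem to the localized estimate
\begin{equation}\tag{$\star$}
\int_{B_6\setminus\kappa Q_k}\bigl|T_0B_k\bigr|\,dx\le C\,\|B_k\|_{L^1}\qquad\text{for every }k,
\end{equation}
the finitely many cubes $Q_k$ with $\kappa Q_k\supseteq B_6$ making $(\star)$ vacuous.

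To prove $(\star)$, fix $k$ and write $Q=Q_k$, $B=B_k$; let $x_Q$ and $\ell$ be the center and side length of $Q$, and let $(w,\pi)\in W^{1,2}_0(B_6)^d\times\tilde{L}^2(B_6)$ solve \eqref{171122@eq2} with data $B$, so $T_0B=(Dw,\pi)$. Since the last component of $B$ has zero mean, $\operatorname{div}w$ equals that component and so vanishes on $B_6\setminus Q$; hence $(w,\pi)$ solves the homogeneous system $\cL w+\nabla\pi=0$, $\operatorname{div}w=0$ on $B_6\setminus\kappa Q$. I would slice $B_6\setminus\kappa Q$ into dyadic shells $S_j=\{2^j\rho_0<|x-x_Q|<2^{j+1}\rho_0\}\cap B_6$ with $\rho_0\sim\ell$, for $j=0,1,\dots,J$ and $2^J\rho_0\sim1$. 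Near $Q$, that is on $B_{C\rho_0}(x_Q)\cap B_6$, one bounds $\int(|Dw|+|\pi-c|)\le C\|B\|_{L^1}$ for a suitable constant $c$ by means of a local $L^q$-estimate ($0<q<1$) for the solution operator on $L^1$ data, which lies within the circle of estimates used for Theorem \ref{M1} (and can also be obtained from the constant-coefficient Stokes system, a Calder\'on--Zygmund operator, by perturbation using the smallness of $\omega_{A^{\alpha\beta},x'}$ at small scales).

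On each outer shell $S_j$ the pair $(w,\pi)$ is a solution of the homogeneous system in a neighborhood of $S_j$. The relevant objects there are not $Dw$ and $\pi$ directly but the quantities $\hat{W}:=A^{1\beta}D_\beta w+\pi e_1$ and $D_{x'}w$, for which the interior excess-decay estimates underlying Theorem \ref{M1} $(a)$ apply after rescaling; since the matrix $(A^{11}_{ij})$ is positive definite by strong ellipticity, $D_1w$ is recovered algebraically from $\hat{W}$ and $D_{x'}w$, and then $\pi$ from $\hat{W}$ and $Dw$, so that control of the oscillations and shell-averages of $\hat{W}$ and $D_{x'}w$ yields control of $\int_{S_j}(|Dw|+|\pi-c_j|)$. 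Feeding the excess-decay estimates into this framework produces an iterative inequality for the shell integrals $\gamma_j:=\int_{S_j}(|Dw|+|\pi-c_j|)$, controlling $\gamma_j$ by integrals over neighboring shells with a gain measured by a Dini modulus $\widetilde\omega$ manufactured from $\omega_{A^{\alpha\beta},x'}$. Summing over $j$ --- absorbing the self-referential terms by the smallness of $\widetilde\omega$ at small arguments, propagating the near-$Q$ contribution $\|B\|_{L^1}$ through the shells, and disposing of the constants $c_j$ via $(\pi)_{B_6}=0$ --- yields $\sum_j\gamma_j\le C\|B\|_{L^1}$, which is $(\star)$. Combined with the estimates for $|\mathcal O|$ and the good part, this gives $|\{x\in B_1:|T_0F|>t\}|\le Ct^{-1}\int_{B_1}(|f_\alpha|+|g|)$.

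I expect the decisive difficulty to be the last summation. Deriving the iterative inequality with a right-hand side that is genuinely summable in $j$, and keeping precise track of how the near-$Q$ mass $\|B\|_{L^1}$ is redistributed across the shells, is delicate: the modulus governing the oscillation of $Dw$ on distant shells is not $\omega_{A^{\alpha\beta},x'}$ itself but a thickened version of it, obtained by accumulation over scales, and a bare Dini condition would leave a logarithmic loss. The hypothesis $\omega_{A^{\alpha\beta},x'}(r)\le C_0(\ln(r/4))^{-2}$ is exactly the borderline decay that keeps this accumulated modulus summable, and it is there that it enters the proof.
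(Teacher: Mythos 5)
Your overall scheme---a Calder\'on--Zygmund decomposition of the datum combined with $L^2$-boundedness and a cancellation estimate $(\star)$ for zero-mean data of small support---is the same mechanism the paper employs, except that the paper packages the decomposition into the abstract Lemma~\ref{170823@lem1} (whose proof simply invokes Lemma~4.1 of \cite{MR3747493}). You also correctly identify that the hypothesis $\omega_{A^{\alpha\beta},x'}(r)\le C_0(\ln(r/4))^{-2}$ is there so that the accumulated Dini modulus over the $\sim\ln(1/r)$ dyadic annuli separating the support of the bad piece from its complement stays summable. Where the proposal and the paper part ways is in the proof of $(\star)$. The paper first factors $T_0=T\circ F$ through an algebraic remixing map $F$ on the data (see the proof of Lemma~\ref{170904@lem1}), and then proves the localization property of $T$ by \emph{duality}: pairing against an adjoint solution $(v,\pi)$ whose data lives in a distant annulus $(B_{2R}(x_0)\setminus B_R(x_0))\cap B_1$, so that $(v,\pi)$ solves the homogeneous adjoint system near $B_r(x_0)$, and the zero mean of the remixed datum $(\tilde f_1,\dots,\tilde f_d,\tilde g)$ converts the pairing into oscillations of $V=A^{1\beta}_*D_\beta v+\pi e_1$, $D_{x'}v$, and $D_1v^1$ on the near ball. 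Those oscillations are exactly what the excess-decay machinery of Theorem~\ref{M1} controls, and the remixing is tuned so that the duality pairing produces precisely these quantities.

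Your proposal instead attacks $(\star)$ by an outward shell iteration on the primal solution $(w,\pi)$: you want to bound $\gamma_j=\int_{S_j}(|Dw|+|\pi-c_j|)$ by propagating the near-$Q$ bound $\lesssim\|B\|_{L^1}$ through the shells, ``powered'' by the excess decay applied to $\hat W$ and $D_{x'}w$. I do not see how this closes. The excess-decay estimates (Lemmas~\ref{170908@lem1}, \ref{171020@lem1z}) control oscillations at small scales in terms of $L^1$ means at larger scales; they iterate \emph{inward} and give no mechanism for pushing an $L^1$ bound outward across shells. In kernel terms, what you need is a H\"ormander-type cancellation for the Green tensor of the variable-coefficient Stokes system, $\int_{|x-x_Q|>\kappa\ell}|\nabla_x\nabla_y G(x,y)-\nabla_x\nabla_y G(x,x_Q)|\,dx\lesssim 1$, and your shell iteration is supposed to encode it; but it is precisely because such pointwise kernel estimates are awkward to extract for merely Dini-type coefficients that the paper dualizes, transferring the burden onto the well-controlled excess decay of the adjoint solution away from its singular support. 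Without this transfer, or an explicit construction of the Green tensor with a quantitative modulus of continuity in the spirit of \cite{MR3693868}, the inductive step ``$\gamma_{j+1}$ controlled by $\gamma_j$ plus a Dini error'' has no source. A further symptom: since you do not dualize, you never introduce the remixing map $F$, but $F$ is what makes the zero-mean cancellation of the data line up with quantities whose decay is known---zero mean of $(f_1,\dots,f_d,g)$ on its own is not the right cancellation for the Stokes system.
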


\section{Preliminary lemmas}		\label{Sec3}
In this section, we prove some preliminary results which will be used in the proofs of the main theorems in the next section.
Throughout this section, we set
$$
\cL_0 u=D_\alpha(\bar{A}^{\alpha\beta}D_\beta u),
$$
where $\bar{A}^{\alpha\beta}=\bar{A}^{\alpha\beta}(x_1)$ satisfy \eqref{ell}.
Hereafter in the paper,  we shall use the following notation.
\begin{notation}
For nonnegative (variable) quantities $A$ and $B$,
we denote $A\lesssim B$ if there exists a generic positive constant C such that $A \le CB$.
We add subscript letters like $A\lesssim_{a,b} B$ to indicate the dependence of the implicit constant $C$ on the parameters $a$ and $b$.
\end{notation}

The following lemma is regarding Lipschitz estimates of $u$ and linear combinations of $Du$ and $p$.

\begin{lemma}		\label{170901@lem1}
Let $0<r<R$ and $\ell$ be a constant.
Assume that $(u,p)\in W^{1,2}(B_R)^d\times L^2(B_R)$ satisfies
\begin{equation}		\label{170908@eq2}
\left\{
\begin{aligned}
\cL_0 u+\nabla p=0 & \quad \text{in }\, B_R,\\
\operatorname{div} u= \ell & \quad \text{in }\, B_R.
\end{aligned}
\right.
\end{equation}
Then we have
\begin{align}
\label{170908@eq1}
\|Du\|_{L^\infty(B_r)} &\lesssim_{d,\lambda} (R-r)^{-d/2}\|Du\|_{L^2(B_R)},\\
\label{170908@eq1a}
[U]_{C^{0,1}(B_{r})}+[D_{x'}u]_{C^{0,1}(B_{r})} &\lesssim_{d,\lambda} (R-r)^{-d/2-1}\|Du\|_{L^2(B_R)},
\end{align}
where $U:=\bar{A}^{1\beta}D_\beta u+p e_1$.
Here, we denote the $C^{0,1}$ semi-norm by
\begin{equation}		\label{171218@eq1}
[f]_{C^{0,1}(B_r)}=\sup_{\substack{x,y\in B_r \\ x\neq y}}\frac{|f(x)-f(y)|}{|x-y|}.
\end{equation}
\end{lemma}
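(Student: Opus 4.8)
The plan is to exploit the one‑direction structure of the lemma: since $\bar A^{\alpha\beta}=\bar A^{\alpha\beta}(x_1)$ is independent of $x'$, tangential derivatives of $(u,p)$ of any order solve the same homogeneous system (with $\ell$ replaced by $0$), while the ``bad'' quantities $D_1u$ and $p$ can be recovered \emph{algebraically} from $U=\bar A^{1\beta}D_\beta u+pe_1$, $D_{x'}u$, and the constant $\ell$. First I would make two harmless reductions. Adding a constant to $p$ changes neither \eqref{170908@eq2}, nor $Du$, nor the semi‑norms $[U]_{C^{0,1}}$, $[D_{x'}u]_{C^{0,1}}$, so I may assume $(p)_{B_R}=0$; testing \eqref{170908@eq2} against $W^{1,2}_0(B_R)^d$ and applying the Ne\v{c}as inequality then gives $\|p\|_{L^2(B_R)}\lesssim_{d,\lambda}\|Du\|_{L^2(B_R)}$, hence $\|U\|_{L^2(B_R)}\lesssim_{d,\lambda}\|Du\|_{L^2(B_R)}$; also $|\ell|=|(\operatorname{div}u)_{B_\rho}|\lesssim_d \rho^{-d/2}\|Du\|_{L^2(B_R)}$ for any $0<\rho\le R$. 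Finally, separating the index $\alpha=1$ in the first equation of \eqref{170908@eq2} and using $\nabla p=\sum_i(D_ip)e_i$, I rewrite that equation as
\[
D_1U+\sum_{\alpha=2}^d D_\alpha\big(\bar A^{\alpha\beta}D_\beta u+pe_\alpha\big)=0,
\]
so that $D_1U$ is a sum of tangential derivatives of $D_1u$, $D_ju$ ($j\ge2$), and $p$.

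Next comes the algebraic reconstruction, which is the conceptual core. By \eqref{ell} with $\xi_1=v$, $\xi_\alpha=0$ for $\alpha\ge2$, one has $\bar A^{11}v\cdot v\ge\lambda|v|^2$, so $\bar A^{11}$ is invertible with $\|(\bar A^{11})^{-1}\|\le\lambda^{-1}$ and $e_1\cdot(\bar A^{11})^{-1}e_1\ge\lambda^3>0$. Combining $U=\bar A^{11}D_1u+\sum_{j\ge2}\bar A^{1j}D_ju+pe_1$ with $\operatorname{div}u=\ell$ (i.e.\ $e_1\cdot D_1u=\ell-\sum_{j\ge2}D_ju^j$) gives a $(d+1)\times(d+1)$ linear system for $(D_1u,p)$ whose matrix is invertible with norm controlled by $d,\lambda$; solving it yields the pointwise bound
\[
|D_1u|+|p|\lesssim_{d,\lambda}|U|+|D_{x'}u|+|\ell|.
\]
Since all coefficients entering this reconstruction depend only on $x_1$, applying $D_{x'}^\gamma$ for any $x'$–multi‑index $\gamma$ gives $|D_1D_{x'}^\gamma u|+|D_{x'}^\gamma p|\lesssim_{d,\lambda,|\gamma|}|D_{x'}^\gamma U|+|D_{x'}^{\gamma}D_{x'}u|$ when $\gamma\ne0$ (with an extra $|\ell|$ allowed if $\gamma=0$). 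Thus every $x_1$–derivative of $U$ or of $D_{x'}u$ can be traded for tangential derivatives of $U$ and of $u$.

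For the tangential regularity I would use, for $k\in\{2,\dots,d\}$ and small $h$, that the difference quotients $(\delta_h^k u,\delta_h^k p)$ solve \eqref{170908@eq2} with $\ell$ replaced by $0$; by the interior $W^{1,2}$–$L^2$ estimate for Stokes systems with bounded measurable coefficients (see \cite{arXiv:1604.02690v2,MR3758532,MR3809039}), iterated in $k$ and in the order of differentiation, one gets for every $m\ge1$ and concentric $B_r\subset B_{R'}\subset B_R$
\[
\|D_{x'}^m Du\|_{L^2(B_{R'})}+\|D_{x'}^m p\|_{L^2(B_{R'})}\lesssim_{d,\lambda,m}(R-R')^{-m}\|Du\|_{L^2(B_R)},
\]
where for the pressure one uses $D_{x'}^m p=D_{x'}^m(p-(p)_{B_{R'}})$. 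Feeding in the reconstruction formulas to express $x_1$–derivatives of $D_{x'}u$ and, via the displayed identity for $D_1U$, of $U$, in terms of tangential derivatives, and starting from the $L^2$–bounds on $Du$ and $U$ already obtained, I conclude that $D_{x'}u$ and $U$ lie in $W^{m,2}(B_{R'})$ for every $m$, with bounds of the same shape. Fixing $m>d/2+1$, the Sobolev embedding $W^{m,2}\hookrightarrow C^1$ on a finite chain of concentric balls between $B_r$ and $B_R$, with the scaling tracked, yields $\|U\|_{L^\infty(B_r)}+\|D_{x'}u\|_{L^\infty(B_r)}\lesssim_{d,\lambda}(R-r)^{-d/2}\|Du\|_{L^2(B_R)}$ and $\|DU\|_{L^\infty(B_r)}+\|D(D_{x'}u)\|_{L^\infty(B_r)}\lesssim_{d,\lambda}(R-r)^{-d/2-1}\|Du\|_{L^2(B_R)}$. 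Since $B_r$ is convex, $[f]_{C^{0,1}(B_r)}=\|Df\|_{L^\infty(B_r)}$ for $f\in W^{1,\infty}(B_r)$, which gives \eqref{170908@eq1a}; and \eqref{170908@eq1} follows by combining the $L^\infty$–bound on $D_{x'}u$ with the pointwise estimate $|D_1u|\lesssim_{d,\lambda}|U|+|D_{x'}u|+|\ell|$ and the bounds for $\|U\|_{L^\infty(B_r)}$ and $|\ell|$.

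I expect the main obstacle to be the analytic input in the tangential step: controlling the pressure under tangential difference quotients (which forces working with $p-(p)_B$ and invoking the Ne\v{c}as/Bogovski\u{\i} inequality so that $\|\nabla p\|_{W^{-1,2}}$ is bounded by $\|Du\|_{L^2}$), together with the scaling bookkeeping over the chain of nested balls needed to land exactly on the exponents $-d/2$ and $-d/2-1$. The algebraic reconstruction of $(D_1u,p)$ from $(U,D_{x'}u,\ell)$ is elementary and is the part that makes everything go through despite the mere measurability of $\bar A^{\alpha\beta}$ in $x_1$.
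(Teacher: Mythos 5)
Your overall scheme---tangential difference quotients exploiting the $x'$‑independence of $\bar A^{\alpha\beta}$, plus algebraic reconstruction of $(D_1u,p)$ from $(U,D_{x'}u,\ell)$---is close in spirit to what the paper does, and the algebraic step is fine. However, there is a genuine gap in the regularity bootstrap. You assert that $U$ and $D_{x'}u$ lie in $W^{m,2}(B_{R'})$ for \emph{every} $m$ and then invoke the standard embedding $W^{m,2}\hookrightarrow C^1$. This is false here: the coefficients $\bar A^{\alpha\beta}(x_1)$ are merely bounded measurable in $x_1$, so the identity
$$
D_1U=-\sum_{\alpha=2}^d\big(\bar A^{\alpha\beta}D_{\alpha\beta}u+D_\alpha p\,e_\alpha\big)
$$
cannot be differentiated a second time in $x_1$ (the term $D_1\bar A^{\alpha\beta}$ does not exist), and similarly the reconstruction formula for $D_1u$ has $x_1$‑measurable coefficients. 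What the argument actually yields is arbitrarily high \emph{tangential} regularity of $U,D_{x'}u,p$ together with \emph{one} $x_1$‑derivative of $U$ and $D_{x'}u$; i.e.\ bounds on $D_{x'}^m f$ and $D_1D_{x'}^m f$ for $f\in\{U,D_{x'}u\}$. That is not membership in $W^{m,2}$ for $m\ge 2$, so the isotropic embedding does not apply. To salvage the conclusion you would need an anisotropic embedding (e.g.\ for a.e.\ $x_1$ apply Sobolev in $x'$ to both $f(x_1,\cdot)$ and $D_1 f(x_1,\cdot)$, then conclude boundedness of $f$, $D_{x'}f$, and $D_1f$, hence the Lipschitz bound), and track the scaling carefully to land on the exponents $-d/2$ and $-d/2-1$. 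As written, the step from tangential $L^2$ bounds to $C^{0,1}$ is not justified.

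For comparison, the paper avoids this issue entirely: it cites a ready‑made $L^\infty$ estimate for $(Du,p)$ in terms of $\|Du\|_{L^2}+\|p\|_{L^2}$ for the partially‑measurable‑coefficient Stokes system (a Moser‑type result from \cite{MR3758532}), together with a tangential $W^{1,2}$ estimate from \cite{arXiv:1604.02690v2}. Since $(D_{x'}u,D_{x'}p)$ solves the same homogeneous system with $\ell=0$, one application of that $L^\infty$ estimate to the tangentially differentiated pair gives $\|D(D_{x'}u)\|_{L^\infty}+\|D_{x'}p\|_{L^\infty}$, from which $\|D_{x'}U\|_{L^\infty}$ follows; the identity for $D_1U$ then closes the argument without any $W^{m,2}$ bootstrapping or Sobolev embedding.
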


\begin{proof}
By \cite[Lemma 4.1 $(i)$]{MR3758532}, we have \eqref{170908@eq1} and
\begin{equation}		\label{171210@A1}
\|p\|_{L^\infty(B_r)}\lesssim_{d,\lambda} (R-r)^{-d/2}\big(\|Du\|_{L^2(B_R)}+\|p\|_{L^2(B_R)}\big).
\end{equation}
We also have  from \cite[Lemmas 4.1 and 4.2]{arXiv:1604.02690v2} that
$$
(D_{i}u, D_i p)\in W^{1,2}(B_{(R+r)/2})^d\times L^2(B_{(R+r)/2}), \quad i\in \{2,\ldots,d\},
$$
and
$$
\|D(D_{x'}u)\|_{L^2(B_{(R+r)/2})}+\|D_{x'}p\|_{L^2(B_{(R+r)/2})}\lesssim_{d,\lambda} (R-r)^{-1}\|Du\|_{L^2(B_{R})}.
$$
Since $(D_{x'}u, D_{x'} p)$ satisfies \eqref{170908@eq2} with $\ell=0$ in $B_{(R+r)/2}$, by applying \eqref{170908@eq1}, \eqref{171210@A1}, and the above inequality, we obtain
\begin{align}
\nonumber
&\|D(D_{x'}u)\|_{L^\infty(B_r)}+\|D_{x'}p\|_{L^\infty(B_r)} \\
\nonumber
&\lesssim (R-r)^{-d/2}\big(\|D(D_{x'}u)\|_{L^2(B_{(R+r)/2})}+\|D_{x'}p\|_{L^2(B_{(R+r)/2})}\big)\\
\label{171210@A3}
&\lesssim (R-r)^{-d/2-1}\|Du\|_{L^2(B_{R})}.
\end{align}
This yields that
$$
\|D_{x'}U\|_{L^\infty(B_r)}\lesssim (R-r)^{-d/2-1}\|Du\|_{L^2(B_R)}.
$$
Moreover, since we have that (using $\cL_0 u+\nabla p=0$)
$$
D_1 U=-\sum_{\alpha=2}^d  (\bar{A}^{\alpha\beta}D_{\alpha \beta} u+D_\alpha p e_\alpha),
$$
the estimate \eqref{171210@A3} implies
$$
\|D_1 U\|_{L^\infty (B_r)}\lesssim (R-r)^{-d/2-1}\|Du\|_{L^2(B_R)}.
$$
Combining the above inequalities, we have
$$
\|DU\|_{L^\infty(B_r)}+\|D(D_{x'}u)\|_{L^\infty(B_r)}\lesssim (R-r)^{-d/2-1}\|Du\|_{L^2(B_R)},
$$
which gives \eqref{170908@eq1a}.
The lemma is proved.
\end{proof}

In the next lemma, we obtain $L^q$-mean oscillation estimates of linear combinations of $Du$ and $p$ for $q\in (0,1)$.

\begin{lemma}		\label{170908@lem1}
Let $0<r\le R/2$ and $\ell$ be a constant.
Assume that $(u,p)\in W^{1,2}(B_R)^d\times L^2(B_R)$ satisfies \eqref{170908@eq2}.
Then for any $q\in (0,1)$, we have
\begin{equation}		\label{171110@eq1}
\begin{aligned}
&\left(\dashint_{B_r}|U-(U)_{B_r}|^q +|D_{x'}u -(D_{x'}u)_{B_r}|^q \, dx\right)^{1/q}\\
&\lesssim_{d,\lambda,q}
\frac{r}{R}
\inf_{\substack{\theta\in \bR^d \\ \Theta\in \bR^{d\times (d-1)}}}
\left(\dashint_{B_R}|U-\theta |^q +|D_{x'}u -\Theta |^q \, dx\right)^{1/q},
\end{aligned}
\end{equation}
where $U:=\bar{A}^{1\beta}D_\beta u+p e_1$.
If $\cL_0$ has constant coefficients, then we have
\begin{equation}		\label{171110@eq1a}
\begin{aligned}
&\left(\dashint_{B_r}|Du-(Du)_{B_r}|^q +|p -(p)_{B_r}|^q \, dx\right)^{1/q}\\
&\lesssim_{d,\lambda,q}
\frac{r}{R}
\inf_{\Theta\in \bR^{d\times d}}
\left(\dashint_{B_R}|Du -\Theta |^q \, dx\right)^{1/q}.
\end{aligned}
\end{equation}
\end{lemma}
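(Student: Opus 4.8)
The plan is to bootstrap from the pointwise (Lipschitz) bounds of Lemma \ref{170901@lem1} to the claimed $L^q$-mean oscillation decay, exploiting the affine invariance of the equation \eqref{170908@eq2}. First I would observe that $U = \bar A^{1\beta}D_\beta u + pe_1$ and $D_{x'}u$ enjoy a crucial rigidity: if $(u,p)$ solves \eqref{170908@eq2} with $\cL_0$ having coefficients depending only on $x_1$, then for any constant vector $\theta\in\bR^d$ and any constant matrix $\Theta\in\bR^{d\times(d-1)}$ one can subtract off an affine correction $(\tilde u,\tilde p)$ so that $\tilde U := \bar A^{1\beta}D_\beta\tilde u + \tilde p e_1 = U-\theta$ and $D_{x'}\tilde u = D_{x'}u-\Theta$, with $(\tilde u,\tilde p)$ still solving a system of the form \eqref{170908@eq2} (with a possibly different constant $\ell$). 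Concretely, choosing $\tilde u(x) = u(x) - \Theta x' - (\text{linear in }x_1\text{ term})$ and $\tilde p = p - (p)$-type constant adjusts $U$ by the constant $\theta$; since $\bar A^{\alpha\beta}$ depends only on $x_1$, the quantities $D(D_{x'}u)$ and $D_{x'}p$ — hence $DU$ via the identity $D_1U = -\sum_{\alpha=2}^d(\bar A^{\alpha\beta}D_{\alpha\beta}u + D_\alpha p\,e_\alpha)$ from the proof of Lemma \ref{170901@lem1} — are unchanged by this correction. So it suffices to prove \eqref{171110@eq1} with the infimum replaced by $(\dashint_{B_R}|U|^q + |D_{x'}u|^q\,dx)^{1/q}$, i.e. with $\theta=0$, $\Theta=0$.

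Next, by Lemma \ref{170901@lem1} applied on $B_{R/2}\subset B_R$, the functions $U$ and $D_{x'}u$ are Lipschitz on $B_{R/2}$ with
$$
[U]_{C^{0,1}(B_{R/2})} + [D_{x'}u]_{C^{0,1}(B_{R/2})} \lesssim_{d,\lambda} R^{-d/2-1}\|Du\|_{L^2(B_R)}.
$$
For $r\le R/2$, the elementary estimate $|h(x)-(h)_{B_r}| \le 2r\,[h]_{C^{0,1}(B_r)}$ for Lipschitz $h$ then gives
$$
\Big(\dashint_{B_r}|U-(U)_{B_r}|^q + |D_{x'}u-(D_{x'}u)_{B_r}|^q\,dx\Big)^{1/q}
\lesssim r\,\big([U]_{C^{0,1}(B_{R/2})} + [D_{x'}u]_{C^{0,1}(B_{R/2})}\big)
\lesssim \frac{r}{R}\cdot R^{-d/2}\|Du\|_{L^2(B_R)}.
$$
It then remains to absorb the factor $R^{-d/2}\|Du\|_{L^2(B_R)}$ into the right-hand side $(\dashint_{B_R}|U|^q + |D_{x'}u|^q\,dx)^{1/q}$. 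This is where the main technical point lies: one must control $\|Du\|_{L^2(B_R)}$ by the $L^q$-norms of $U$ and $D_{x'}u$ for $q<1$. The route I would take is a reverse-Hölder / self-improvement argument: from the ellipticity \eqref{ell} one recovers $D_1u$ (the only derivative not directly present in $D_{x'}u$) together with $p$ from the pair $(U, D_{x'}u)$ by solving the linear algebra $\bar A^{11}D_1u + (p\,e_1) = U - \sum_{\beta\ge2}\bar A^{1\beta}D_\beta u$ — but this couples $D_1u$ and $p$, so one instead uses the energy inequality for \eqref{170908@eq2} (testing with a cutoff times $u$ minus its affine part) to get $\|Du\|_{L^2(B_{R/2})} \lesssim R^{-1}\|u - (\text{affine})\|_{L^2(B_R)} + \|p-(p)_{B_R}\|_{L^2(B_R)}$, and then invokes the higher-integrability/interpolation bound $\|h\|_{L^2(B_{R/2})}\lesssim (\dashint_{B_R}|h|^q)^{1/q}$ valid for solutions. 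The cleanest implementation is to first prove the estimate with $L^1$ (or $L^2$) in place of $L^q$ on the right, and then upgrade to general $q\in(0,1)$ by the standard Giaquinta–Modica / Iwaniec-type trick converting an $L^2$-on-left-by-$L^1$-on-right inequality into an $L^q$-on-both-sides inequality, using the already-established $C^{0,1}$ bound to control the averaged quantity. I expect this last absorption step — passing from an $L^2$ bound on $\|Du\|$ to the $L^q$ mean of $U$ and $D_{x'}u$ — to be the real obstacle, since $(U, D_{x'}u)$ does not see $D_1 u$ and $p$ separately and one must use the structure of the Stokes system to recover them.

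Finally, for \eqref{171110@eq1a}: when $\cL_0$ has genuinely constant coefficients $\bar A^{\alpha\beta}$, differentiating \eqref{170908@eq2} in every direction (not just $x'$) is legitimate since $D_1$ now also commutes with $\cL_0$ and $\operatorname{div}$, so $(D_iu, D_ip)$ solves \eqref{170908@eq2} with $\ell=0$ for all $i\in\{1,\dots,d\}$; hence the full gradient $Du$ and the pressure $p$ are smooth, and the argument above applies verbatim with $D_{x'}u$ replaced by $Du$ and $U$ absorbed (one has $p - (p)_{B_r}$ controlled via $U$ and $Du$). Subtracting the constant matrix $\Theta$ corresponds to replacing $u$ by $u - \Theta x$, which preserves \eqref{170908@eq2} with the same $\ell$ shifted by $\operatorname{tr}\Theta$, giving the infimum form. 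This completes the plan.
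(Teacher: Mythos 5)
Your high-level structure is correct and matches the paper's: reduce via an affine correction to the case $\theta=0$, $\Theta=0$, $\ell=0$, then use the Lipschitz bounds of Lemma \ref{170901@lem1} together with an iteration/interpolation argument to pass from $L^2$ to $L^q$. The affine reduction you describe is essentially the one the paper carries out (the paper's correction $(u_0(x_1),p_0(x_1))$ is chosen to depend on $x_1$ precisely so that $\bar A^{1\beta}(x_1)D_\beta u_e+p_e e_1 = U-\theta$ holds pointwise; your sketch glosses over this $x_1$-dependence but the idea is the same), and the standard iteration to replace $\|Du\|_{L^2}$ on the right of \eqref{170908@eq1} by $\|Du\|_{L^q}$ is exactly what the paper does via $\|Du\|_{L^2(B_\rho)}\le \|Du\|_{L^\infty(B_\rho)}^{(2-q)/2}\|Du\|_{L^q(B_\rho)}^{q/2}$.

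The genuine gap is in the absorption step, which you correctly flag as the real obstacle but do not close. You abandon the linear-algebra route on the grounds that $\bar A^{11}D_1u + pe_1 = U - \sum_{\beta\ge2}\bar A^{1\beta}D_\beta u$ ``couples $D_1u$ and $p$.'' But that coupling is only in the \emph{first} component: $pe_1$ contributes nothing to the components $i\in\{2,\dots,d\}$. Restricting to those components gives
$$
\sum_{j=2}^d \bar{A}^{11}_{ij} D_1 u^j=U^i - \sum_{j=1}^d \sum_{\beta=2}^d \bar{A}^{1\beta}_{ij}D_\beta u^j-\bar{A}^{11}_{i1}D_1 u^1,\quad i\in\{2,\dots,d\},
$$
which involves no $p$. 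The remaining unknown $D_1u^1$ is pinned down by the divergence equation $D_1 u^1 = -\sum_{j\ge2}D_j u^j$ (valid once $\ell=0$), and the $(d-1)\times(d-1)$ matrix $[\bar A^{11}_{ij}]_{i,j\ge2}$ is invertible by ellipticity. This yields the \emph{pointwise} algebraic bound $|Du|\lesssim_{d,\lambda} |U|+|D_{x'}u|$, which is the crux of the paper's proof: it lets you replace $(\dashint_{B_R}|Du|^q)^{1/q}$ by $(\dashint_{B_R}|U|^q+|D_{x'}u|^q)^{1/q}$ immediately, with no energy estimate and no pressure term to control. Your proposed alternative -- an energy inequality producing $\|p-(p)_{B_R}\|_{L^2}$ on the right, then higher integrability -- reintroduces the pressure and creates a circularity: the right-hand side of \eqref{171110@eq1} controls $p$ only through the single component $U^1$, which already contains $D_1u$, so there is no independent handle on $\|p\|$ to feed back into the Caccioppoli estimate. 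As written, that route does not close. Once you have the pointwise bound $|Du|\lesssim |U|+|D_{x'}u|$, everything else in your sketch goes through, and the constant-coefficient case \eqref{171110@eq1a} is then obtained exactly as you say, by relabeling coordinates so that the $C^{0,1}$ bound applies to all of $Du$ and $p$.
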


\begin{proof}
To show \eqref{171110@eq1}, we first claim that if $(u,p)\in W^{1,2}(B_R)^d\times L^2(B_R)$ satisfies \eqref{170908@eq2} with $\ell=0$, then we have for $0<r\le R/2$ that
\begin{equation}		\label{170913@eq1}
\begin{aligned}
&\left(\dashint_{B_r}|U-(U)_{B_r}|^q +|D_{x'}u -(D_{x'}u)_{B_r}|^q \, dx\right)^{1/q}\\
&\lesssim_{d,\lambda,q}
\frac{r}{R}
 \left(\dashint_{B_R} |U|^q+|D_{x'}u|^q\, dx \right)^{1/q}.
\end{aligned}
\end{equation}
Suppose that $(u,p)\in W^{1,2}(B_R)^d\times L^2(B_R)$ satisfies \eqref{170908@eq2} with $\ell=0$.
By \eqref{170908@eq1} and a standard iteration argument (see, for instance, \cite[pp. 80--82]{MR1239172}), we obtain for $0<\rho< R$ that
$$
\|Du\|_{L^\infty(B_\rho)} \lesssim_{d,\lambda,q} (R-\rho)^{-d/q}\|Du\|_{L^q(B_R)}.
$$
Set $\rho=\frac{r+R}{2}$.
From \eqref{170908@eq1a} and the above inequality, it follows that
\begin{align*}
[U]_{C^{0,1}(B_{r})}+[D_{x'}u]_{C^{0,1}(B_{r})}&\lesssim (\rho-r)^{-d/2-1}\|Du\|_{L^2(B_\rho)}\\
&\lesssim (\rho-r)^{-d/2-1}\|Du\|_{L^\infty(B_\rho)}^{(2-q)/2}\|Du\|_{L^q(B_\rho)}^{q/2}\\
&\lesssim (R-r)^{-d/q-1}\|Du\|_{L^q(B_R)}.
\end{align*}
Hence we have
\begin{equation}		\label{170908@eq5}
\begin{aligned}
&\left(\dashint_{B_r}|U-(U)_{B_r}|^q +|D_{x'}u -(D_{x'}u)_{B_r}|^q \, dx\right)^{1/q}\\
& \lesssim \frac{r}{R} \left(\dashint_{B_R} |Du|^q \, dx \right)^{1/q}.
\end{aligned}
\end{equation}
Notice from the definition of $U$ that
$$
\sum_{j=2}^d \bar{A}^{11}_{ij} D_1 u^j=U^i - \sum_{j=1}^d \sum_{\beta=2}^d \bar{A}^{1\beta}_{ij}D_\beta u^j-\bar{A}^{11}_{i1}D_1 u^1, \quad i\in \{2,\ldots,d\}.
$$
By the ellipticity condition on $\bar{A}^{\alpha\beta}$, $[\bar{A}^{11}_{ij}]_{i, j=2}^d$ is nondegenerate.
Thus using the fact that  $D_1u^1= -\sum_{j=2}^d D_j u^j$,
we have
$$
|Du|\le |D_1 u^1 | + \sum_{j=2}^d |D_1 u^j |+|D_{x'}u| \lesssim |U|+|D_{x'}u|.
$$
This together with \eqref{170908@eq5} gives \eqref{170913@eq1}.

Now we are ready to prove that \eqref{171110@eq1} holds.
Suppose that  $(u,p)\in W^{1,2}(B_R)^d\times L^2(B_R)$ satisfies \eqref{170908@eq2}.
For given $\theta_i\in \bR^d$, $i\in \{1,\ldots,d\}$, find functions $u_0=(u_0^1(x_1),\ldots,u^d_0(x_1))^{\top}$ and $p_0=p_0(x_1)$ such that
$$
u_0^1=-\sum_{i=2}^d \theta_i +\ell, \quad \bar{A}^{11}u_0+p_0 e_1=\theta_1-\sum_{\beta=2}^d \bar{A}^{1\beta}\theta_\beta.
$$
A direct calculation shows that the pair $(u_e, p_e)$ given by
$$
u_e=u-\int^{x_1}_{-R} u_0 \, dy_1-\sum_{i=2}^d x_i \theta_i \quad \text{and} \quad p_e=p-p_0
$$
satisfies \eqref{170908@eq2} with $\ell=0$.
Therefore, by applying \eqref{170913@eq1} and using the fact that
$$
\bar{A}^{1\beta}D_\beta u_e+p_e e_1=U-\theta_1, \quad D_{i}u_e=D_{i}u-\theta_i, \quad i\in \{2,\ldots,d\},
$$
we have
$$
\begin{aligned}
&\left(\dashint_{B_r}|U-(U)_{B_r}|^q +|D_{x'}u -(D_{x'}u)_{B_r}|^q \, dx\right)^{1/q}\\
&\lesssim \frac{r}{R}
\left(\dashint_{B_R}|U-\theta_1 |^q +\sum_{i=2}^d|D_{i}u -\theta_i |^q \, dx\right)^{1/q}.
\end{aligned}
$$
Since the above inequality holds for arbitrary $\theta_i \in \bR^d$, we get the estimate \eqref{171110@eq1}.

Next we turn to the proof of \eqref{171110@eq1a}.
Assume that $(u,p)\in W^{1,2}(B_R)^d\times L^2(B_R)$ satisfies \eqref{170908@eq2}, where $\cL_0$ has constant coefficients.
Then by using \eqref{170908@eq1a} and relabeling the coordinates axes, we have for $0<r<R$ that
$$
[Du]_{C^{0,1}(B_r)}+[p]_{C^{0,1}(B_r)}\lesssim (R-r)^{-d/2-1}\|Du\|_{L^2(B_R)}.
$$
Thus by following the same argument used in deriving \eqref{170908@eq5}, we see that
\begin{equation}		\label{171110@eq2}
\begin{aligned}
&\left(\dashint_{B_r}|Du-(Du)_{B_r}|^q+|p-(p)_{B_r}|^q\,dx\right)^{1/q}\lesssim
\frac{r}{R}
 \left(\dashint_{B_R}|Du|^q\,dx\right)^{1/q}
\end{aligned}
\end{equation}
for all $0<r\le R/2$.
Let $\theta_1,\ldots, \theta_d\in \bR^d$, and set $u_e=u-\sum_{i=1}^d \theta_i x_i$.
Then $(u_e,p)$ satisfies
$$
\left\{
\begin{aligned}
\cL_0 u_e+\nabla p=0 & \quad \text{in }\, B_R,\\
\operatorname{div} u_e= \ell-\sum_{i=1}^d \theta_i^i & \quad \text{in }\, B_R.
\end{aligned}
\right.
$$
Therefore, by \eqref{171110@eq2} with $(u_e,p)$ in place of $(u,p)$, we have
$$
\left(\dashint_{B_r}|Du-(Du)_{B_r}|^q+|p-(p)_{B_r}|^q\,dx\right)^{1/q}\lesssim
\frac{r}{R}
\left(\dashint_{B_R}\sum_{i=1}^d|D_iu-\theta_i|^q\,dx\right)^{1/q},
$$
where we used the fact that $D_i u_e=D_i u-\theta_i$.
Since the above inequality holds for arbitrary $\theta_i\in \bR^d$, we get \eqref{171110@eq1a}.
The lemma is proved.
\end{proof}

\begin{lemma}		\label{170823@lem1}
Let $T$ be a bounded linear operator from $L^2(B_{R_0})^{k}$ to $L^2(B_{R_0})^k$, where $R_0\ge 4$ and $k\in \{1,2,\ldots\}$.
Let $\mu<1$, $c>1$, and $C>0$ be constants.
Suppose that for any $x_0\in B_1$, $0 < r < \mu$, and
$$
g\in \tilde{L}^2(B_{R_0})^k \,\text{ with }\, \operatorname{supp} \, g \subset B_r(x_0)\cap B_1,
$$
we have
\begin{equation}		\label{180813@eq1}
\int_{B_1\setminus B_{cr}(x_0)} |T g| \, dx \le C \int_{ B_r(x_0)\cap B_1} |g| \, dx.
\end{equation}
Then the operator $T$ can be extended on
$$
\big\{f\in L^1(B_{R_0})^k: \operatorname{supp}f\subset B_1\big\}
$$
 such that for any $t>0$,
we have
$$
\big|\{x\in B_1: |T f(x)|>t\}\big| \lesssim_{d,k,\mu,c,C} \frac{1}{t}\int_{B_1} | f | \, dx.
$$
\end{lemma}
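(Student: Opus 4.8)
The plan is to run a Calder\'on--Zygmund argument, in which the hypothesis \eqref{180813@eq1} plays the role of a (localized) H\"ormander cancellation condition and the $L^2\to L^2$ boundedness of $T$ handles the ``good'' part. \emph{Reductions.} It suffices to prove the asserted inequality for $f\in L^1(B_{R_0})^k\cap L^2(B_{R_0})^k$ with $\operatorname{supp}f\subset B_1$: such $f$ are dense in $\{f\in L^1(B_{R_0})^k:\operatorname{supp}f\subset B_1\}$, for them $Tf$ is already defined (since $f\in L^2(B_{R_0})^k$), and applying the bound to differences $f_n-f_m$ shows that $\{Tf_n\}$ is Cauchy in measure whenever $f_n\to f$ in $L^1$, which defines the extension and passes the estimate to the limit. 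Next, if $t\le\|f\|_{L^1(B_1)}/|B_1|$ the inequality is trivial, since its left-hand side is at most $|B_1|$; so I may assume $t>\|f\|_{L^1(B_1)}/|B_1|$. Finally, enlarging $c$ only weakens \eqref{180813@eq1} (the domain of integration on its left-hand side shrinks), so I may also assume $c\ge 2/\mu$.

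\emph{Calder\'on--Zygmund decomposition.} Extending $f$ by zero and running the dyadic stopping-time argument at height $t$ produces pairwise disjoint dyadic cubes $\{Q_j\}$ with $t<\dashint_{Q_j}|f|\le 2^d t$ together with a splitting $f=h+\sum_j b_j$ satisfying $\|h\|_{L^\infty}\lesssim_d t$, $\|h\|_{L^1(B_{R_0})}\le\|f\|_{L^1(B_1)}$, $\operatorname{supp}b_j\subset Q_j$, $\int b_j=0$, $\|b_j\|_{L^1}\lesssim\int_{Q_j}|f|$, and $\sum_j|Q_j|\le\|f\|_{L^1(B_1)}/t$. For each $j$ with $Q_j\cap B_1\ne\emptyset$ I choose $x_j\in Q_j\cap B_1$ and set $r_j:=\operatorname{diam}Q_j$, so that $Q_j\subset B_{r_j}(x_j)$ and $x_j\in B_1$. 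If $r_j<\mu$ and $Q_j\subset B_1$, then $b_j$ is mean zero and supported in $B_{r_j}(x_j)\cap B_1$, hence admissible in \eqref{180813@eq1}. The remaining pieces --- those with $r_j\ge\mu$ or with $Q_j$ meeting $\partial B_1$ --- have to be reorganized so that the genuinely bad remainder stays inside $B_1$: one transfers $b_j\mathbf{1}_{B_1^{c}}$ (a multiple of $\mathbf{1}_{Q_j\setminus B_1}$ of size $\lesssim_d t$) into the good part and replaces $b_j\mathbf{1}_{B_1}$ by a mean-zero function $\tilde b_j$ supported in $Q_j\cap B_1$ after subtracting a suitable constant; the corrective terms so produced are bounded by $\lesssim_d t$ on a set of total measure $\lesssim\sum_j|Q_j|\lesssim\|f\|_{L^1(B_1)}/t$, so they only increase $\|\tilde h\|_{L^\infty}$ and $\|\tilde h\|_{L^1}$ by acceptable amounts (the cubes meeting $B_1$ in only a small fraction of their volume being absorbed into an exceptional set). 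This is the standard adaptation of the Calder\'on--Zygmund decomposition to the bounded domain $B_1$, in the spirit of \cite{MR3747493}. Moreover, since $c\ge 2/\mu$ and $x_j\in B_1$, any cube with $r_j\ge\mu$ satisfies $B_{cr_j}(x_j)\supset B_1$ and hence contributes nothing outside the exceptional set.

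\emph{Conclusion.} After this reorganization, $f=\tilde h+\sum_j\tilde b_j$ with $\|\tilde h\|_{L^\infty}\lesssim_d t$, $\|\tilde h\|_{L^1}\lesssim\|f\|_{L^1(B_1)}$, and each $\tilde b_j$ mean zero and supported in $B_{r_j}(x_j)\cap B_1$ with $x_j\in B_1$, $r_j<\mu$. By Chebyshev's inequality and the $L^2\to L^2$ bound,
\[
\big|\{x\in B_1:|T\tilde h(x)|>t/2\}\big|\le\frac{4\|T\|^2}{t^2}\|\tilde h\|_{L^2}^2\le\frac{4\|T\|^2}{t^2}\|\tilde h\|_{L^\infty}\|\tilde h\|_{L^1}\lesssim\frac1t\int_{B_1}|f|\,dx.
\]
Setting $E:=\bigcup_j B_{cr_j}(x_j)$, one has $|E|\le\sum_j|B_{cr_j}(x_j)|\lesssim_{d,c}\sum_j|Q_j|\lesssim\|f\|_{L^1(B_1)}/t$, while applying \eqref{180813@eq1} to each $\tilde b_j$ gives
\[
\int_{B_1\setminus E}\Big|T\Big(\sum_j\tilde b_j\Big)\Big|\,dx\le\sum_j\int_{B_1\setminus B_{cr_j}(x_j)}|T\tilde b_j|\,dx\le C\sum_j\int_{Q_j}|f|\,dx\lesssim C\int_{B_1}|f|\,dx,
\]
so that, again by Chebyshev's inequality, $\big|\{x\in B_1\setminus E:|T(\sum_j\tilde b_j)(x)|>t/2\}\big|\lesssim\frac{C}{t}\int_{B_1}|f|\,dx$. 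Since $Tf=T\tilde h+\sum_j T\tilde b_j$ in $L^2(B_{R_0})^k$ --- the series converging because the $\tilde b_j$ have essentially disjoint supports --- combining the bound for $T\tilde h$, the bound for $|E|$, and the bound for $\sum_j\tilde b_j$ on $B_1\setminus E$ yields
\[
\big|\{x\in B_1:|Tf(x)|>t\}\big|\lesssim_{d,k,\mu,c,C}\frac1t\int_{B_1}|f|\,dx,
\]
which is the claim.

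I expect the main obstacle to be the near-boundary bookkeeping in the decomposition step: because \eqref{180813@eq1} is available only for $g$ supported in $B_r(x_0)\cap B_1$ with $x_0\in B_1$ and $r<\mu$, the raw Calder\'on--Zygmund pieces cannot be fed in directly, and one must carefully arrange that the modified bad pieces remain mean zero, stay inside $B_1$, and are localized in balls of radius $<\mu$, while every discarded or corrective term is absorbed either into $\tilde h$ or into an exceptional set of measure $\lesssim\|f\|_{L^1(B_1)}/t$.
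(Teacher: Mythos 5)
Your overall strategy --- Calder\'on--Zygmund decomposition on $B_1$, handle the good part by Chebyshev and the $L^2\to L^2$ bound, handle the mean-zero localized bad pieces by the hypothesis \eqref{180813@eq1}, and dump $\bigcup_j B_{cr_j}(x_j)$ into an exceptional set --- is exactly the framework the paper invokes (it defers to the analogous Lemma 4.1 in \cite{MR3747493} after a density extension). The reductions ($L^1\cap L^2$ density, $t>\|f\|_{L^1}/|B_1|$, enlarging $c$), the observation that for $r_j\ge\mu$ one has $B_{cr_j}(x_j)\supset B_1$, and the estimates for $T\tilde h$ and for $\sum_j T\tilde b_j$ on $B_1\setminus E$ are all sound.

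The gap is precisely in the ``near-boundary bookkeeping'' step that you flag yourself, and the specific assertion that is false is that the corrective constants are $\lesssim_d t$. With $\tilde b_j=(b_j-c_j)\mathbf 1_{Q_j\cap B_1}$ and $c_j=\dashint_{Q_j\cap B_1}b_j$ (so that $\int\tilde b_j=0$), the modified good part equals
\[
\tilde h=f-\sum_j\tilde b_j,\qquad \tilde h\big|_{Q_j\cap B_1}=(f)_{Q_j}+c_j=\dashint_{Q_j\cap B_1}f=\frac{|Q_j|}{|Q_j\cap B_1|}(f)_{Q_j},
\]
and this is of size up to $2^d t\,|Q_j|/|Q_j\cap B_1|$, which is unbounded when $Q_j$ barely clips $B_1$. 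Hence $\|\tilde h\|_{L^\infty}\lesssim_d t$ fails, the inequality $\|\tilde h\|_{L^2}^2\le\|\tilde h\|_{L^\infty}\|\tilde h\|_{L^1}$ gives nothing useful, and the Chebyshev bound for $T\tilde h$ breaks. The parenthetical ``absorbed into an exceptional set'' does not repair this: the identity $f=\tilde h+\sum_j\tilde b_j$ must hold everywhere (not just off an exceptional set) for $Tf=T\tilde h+\sum_j T\tilde b_j$, and you cannot simply discard the offending cubes' contribution to $\tilde h$. What is actually needed is to alter the construction for those $Q_j$ with $|Q_j\cap B_1|\ll|Q_j|$ --- for instance, restore mean zero by subtracting a constant spread over a ball $B_j^*\subset B_1\cap B_{Cr_j}(x_j)$ of volume comparable to $|Q_j|$ rather than over the small set $Q_j\cap B_1$ (then the constant is $\lesssim_d t$ and the support is still in a ball of radius $\lesssim r_j$ about $x_j$), or equivalently run a Calder\'on--Zygmund decomposition adapted to the bounded domain (dyadic structure on the space of homogeneous type $(B_1,dx)$) rather than the global dyadic one. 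Either route closes the gap; as written, the step would fail.
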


\begin{proof}
Let $x_0\in B_1$ and $0<r<\mu$, and set
$$
M=\big\{g\in \tilde{L}^1(B_{R_0})^k :\operatorname{supp}g\subset B_r(x_0)\cap B_1\big\}.
$$
Then by the hypothesis of the lemma, the operator $T$ can be extended as a bounded linear operator from  $M$ to $L^1(B_1\setminus B_{cr}(x_0))^k$.
Indeed, for a given $g\in M$, one can find a sequence $\{g_n\}\subset \tilde{L}^2(B_{R_0})^k$ such that
$$
\operatorname{supp}g_n\subset B_r(x_0)\cap B_1, \quad  \|g_n-g\|_{L^1(B_{R_0})}\to 0 \, \text{ as }\, n\to \infty.
$$
Thus by \eqref{180813@eq1}, $\{Tg_n\}$ is a Cauchy sequence in $L^1(B_1\setminus B_{cr}(x_0))^k$ and its limit, denoted by $Tg$, satisfies \eqref{180813@eq1}.

By using the above extension of $T$ and following the proof of \cite[Lemma 4.1]{MR3747493} with $\Omega=B_1$, one can easily prove the lemma.
We omit the details here.
\end{proof}

We finish this section by establishing a weak type-$(1,1)$ estimate.

\begin{lemma}		\label{170904@lem1}
Let $(u,p)\in W^{1,2}_0(B_4)^d\times \tilde{L}^2(B_4)$ be the weak solution of
\begin{equation}		\label{170904@eq1}
\left\{
\begin{aligned}
\cL_0 u+\nabla p=D_\alpha {f}_\alpha & \quad \text{in }\, B_4,\\
\operatorname{div} u= {g}-(g)_{B_4} & \quad \text{in }\, B_4,
\end{aligned}
\right.
\end{equation}
where $f_\alpha\in L^2(B_4)^d$ and $g\in {L}^2(B_4)$ are supported in $B_1$.
Then for any $t>0$, we have
$$
\big|\{x\in B_{1}:|Du(x)|+|p(x)|>t\}\big| \lesssim_{d,\lambda} \frac{1}{t}\int_{B_1} \big(|f_\alpha|+|g|\big)\,dx.
$$
\end{lemma}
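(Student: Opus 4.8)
The plan is to deduce the estimate from the abstract criterion Lemma~\ref{170823@lem1}, by verifying the associated decay estimate, after first reducing $|Du|+|p|$ to the two ``good'' linear combinations $U=\bar A^{1\beta}D_\beta u+pe_1$ and $D_{x'}u$. Indeed, on any region where $\operatorname{div}u$ is constant the ellipticity of $\bar A^{\alpha\beta}$ makes $[\bar A^{11}_{ij}]_{i,j=2}^{d}$ nondegenerate, so solving for $D_1u$ exactly as in the proof of Lemma~\ref{170908@lem1} gives $|Du|+|p|\lesssim_{d,\lambda}|U|+|D_{x'}u|$ there. Since the operator $T\colon(f_1,\dots,f_d,g)\mapsto(D_1u,\dots,D_du,p)$, with $(u,p)$ the solution of \eqref{170904@eq1}, is bounded on $L^2(B_4)^{d^2+1}$ by the standard energy estimate, Lemma~\ref{170823@lem1} (with $R_0=4$, $k=d^2+1$, and suitable fixed $\mu<1$, $c>1$) reduces everything to proving, for every $x_0\in B_1$, $0<r<\mu$, and every mean-zero tuple $(f_1,\dots,f_d,g)$ supported in $B_r(x_0)\cap B_1$, the decay estimate
$$
\int_{B_1\setminus B_{cr}(x_0)}\big(|U|+|D_{x'}u|\big)\,dx\lesssim_{d,\lambda}\int_{B_r(x_0)\cap B_1}\big(|f_\alpha|+|g|\big)\,dx,
$$
where on $B_1\setminus B_{cr}(x_0)$ the divergence of $u$ and all data vanish, so $|Du|+|p|\lesssim|U|+|D_{x'}u|$ there.

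To bound the left-hand side I would use duality: write it as $\sup\int_{B_4}(U\cdot H+D_{x'}u:h')$ over $L^\infty$ fields with $|H|+|h'|\le1$ supported in $B_1\setminus B_{cr}(x_0)$, and, by choosing test functions matching $p$ against $\operatorname{div}$ and $Du$ against the $\bar A$-contracted gradient, identify this pairing with $\int_{B_r(x_0)}(f_\alpha\cdot D_\alpha v+\pi g)$, where $(v,\pi)\in W^{1,2}_0(B_4)^d\times\tilde L^2(B_4)$ solves the \emph{adjoint} Stokes system with $L^\infty$-data of size $\lesssim_\lambda1$ supported in $B_1\setminus B_{cr}(x_0)$. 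The adjoint coefficients $\bar B^{\alpha\beta}=(\bar A^{\beta\alpha})^{\top}$ still satisfy \eqref{ell} and depend only on $x_1$, so Lemmas~\ref{170901@lem1} and \ref{170908@lem1} apply to $(v,\pi)$; moreover inside $B_{cr/2}(x_0)$ the pair $(v,\pi)$ solves the homogeneous adjoint system with constant divergence, so the continuous combinations $W:=\bar B^{1\beta}D_\beta v+\pi e_1$ and $D_{x'}v$ enjoy interior Lipschitz and $L^q$-mean oscillation bounds on concentric subballs.

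I would then exploit these bounds together with the mean-zero hypothesis on $(f_\alpha,g)$. Using that $\operatorname{div}v$ is constant in $B_{cr/2}(x_0)$ and the same linear algebra as above, rewrite $f_\alpha\cdot D_\alpha v$ on $B_r(x_0)$ as an affine expression in $W$ and $D_{x'}v$ with coefficients bounded pointwise by $|f_\alpha|$, and handle $\pi g$ through $\pi=W^1-\bar B^{1\beta}_{1j}D_\beta v^j$. Subtracting constants (legitimate by mean zero of the data), the pairing is dominated by $\big(\int_{B_r(x_0)}|f_\alpha|+|g|\big)$ times the oscillation of $W$ and $D_{x'}v$ over $B_r(x_0)$, plus terms controlled by $\|W\|_{L^\infty(B_r(x_0))}$. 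Each of these is estimated by dyadically decomposing the range of scales $[cr,1]$ separating $x_0$ from the support of the adjoint data, applying the interior estimates of Lemmas~\ref{170901@lem1}--\ref{170908@lem1} on each dyadic annulus, and summing the resulting geometric series to obtain a bound independent of $r$. Combined with the reduction above and Lemma~\ref{170823@lem1}, this yields the asserted weak type-$(1,1)$ estimate.

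The main obstacle is the last step. Because $\bar A^{\alpha\beta}$ is merely measurable in $x_1$, the quantities $D_1v$ and $\pi$ are not continuous across $x_1$-level sets, so one can only manipulate the combinations $W$ and $D_{x'}v$; moreover the ``subtract a constant'' manipulation couples to the $x_1$-dependence of the coefficients, and one must organize the dyadic summation so that this coupling — and the logarithmically many scales between $x_0$ and the data — do not produce a $\log(1/r)$ loss. This is precisely where the idea from \cite{MR2927619}, refining the arguments of \cite{MR3620893, MR3747493}, is needed.
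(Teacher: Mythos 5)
Your plan has the right skeleton — reduce to the abstract criterion of Lemma~\ref{170823@lem1}, dualize against the adjoint Stokes system, exploit Lipschitz bounds for the good combinations $W=\bar A^{1\beta}_*D_\beta v+\pi e_1$ and $D_{x'}v$ on dyadic annuli, and sum a geometric series — and it correctly identifies the sore spot. But you leave the crucial step unresolved, and the obstacle you flag is in fact fatal to the argument as written, not just a matter of ``organizing the dyadic sum.'' When you rewrite $\int_{B_r(x_0)}\bigl(f_\alpha\cdot D_\alpha v+\pi g\bigr)$ by solving the linear algebra for $D_1v$ and $\pi$, the coefficients of $W$ and $D_{x'}v$ are genuinely $x_1$-dependent (they involve $(\bar A^{11}(x_1))^{-1}$). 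Subtracting $(W)_{B_r(x_0)}$ then produces an error of the form $(W)_{B_r(x_0)}\cdot\int_{B_r(x_0)}c(x_1)^{\!\top}f_1\,dx$, which does \emph{not} vanish even though $\int f_1=0$; it is of size $\|W\|_{L^\infty(B_r(x_0))}\int|f_1|\lesssim R^{-d/2}\|\phi_\alpha,\psi\|_{L^2}\,M$, i.e.\ it lacks the extra factor $r/R$. Each dyadic annulus then contributes $O(1)$ and the sum loses a $\log(1/r)$. There is no reorganization of the sum that recovers this; the cancellation must be built into the data.

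The missing idea is the paper's transformation $F$. Rather than keeping the original data and trying to absorb the $x_1$-dependent linear algebra into the adjoint solution, the paper reparametrizes the data via the bounded, invertible (on both $L^1$ and $L^2$) map $F(f_1,\dots,f_d,g)=(\tilde f_1,\dots,\tilde f_d,\tilde g)$, where $\mathcal A=(\bar A^{11})^{-1}$, $\tilde g=\mathcal A_{11}^{-1}(\mathcal A f_1\cdot e_1-g)$, $\tilde f_1=\mathcal A(f_1-\tilde ge_1)$, $\tilde f_\alpha=f_\alpha-\bar A^{\alpha1}\tilde f_1$. With this substitution $\operatorname{div}u=\tilde f_1^1$, and the duality pairing becomes exactly
$$
\int D_\alpha u\cdot\phi_\alpha+p\psi
=\sum_{\alpha=2}^d\int\tilde f_\alpha\cdot D_\alpha v+\int\tilde f_1\cdot V+\int\tilde g\,D_1v^1,
$$
with \emph{no} $x_1$-dependent matrices multiplying the data. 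Now the mean-zero hypothesis is imposed on the $\tilde f$'s, so one may subtract $(D_\alpha v)_{B_r(x_0)}$, $(V)_{B_r(x_0)}$, $(D_1v^1)_{B_r(x_0)}$ freely and apply the $C^{0,1}$ bounds from Lemma~\ref{170901@lem1}; each annulus contributes $r/R$ and the series converges. Finally one uses that $F$ is bounded on $L^1$ to transfer the weak type-$(1,1)$ bound for $T$ (defined on the $\tilde f$-side) to the original map $T\circ F$. Without this reparametrization — precisely the idea imported from \cite{MR2927619} — the mean-zero structure is destroyed by the $x_1$-measurable coefficients and the proof does not close.
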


\begin{proof}
Denote $\cA=(\bar{A}^{11})^{-1}$.
Consider a linear map $F$ on $(L^1\cap L^2)(B_4)^{d\times d}\times (L^1\cap L^2)(B_4)$ given by $F(f_1,\ldots,f_d,g)=(\tilde{f}_1,\ldots,\tilde{f}_d, \tilde{g})$,
where
$$
\tilde{g}=\frac{1}{\cA_{11}} (\cA f_1 \cdot e_1- g), \quad \tilde{f}_1=\cA(f_1-\tilde{g}e_1), \quad \tilde{f}_\alpha=f_\alpha-\bar{A}^{\alpha 1} \tilde{f}_1, \quad \alpha\in \{2,\ldots,d\}.
$$
Then the map $F$ is bounded and invertible on $L^1(B_4)^{d\times d}\times L^1(B_4)$ and also on $L^2(B_4)^{d\times d}\times L^2(B_4)$.
Indeed, for given $(\tilde{f}_1,\ldots,\tilde{f}_d,\tilde{g})\in L^1(B_4)^{d\times d}\times L^1(B_4)$ (or $L^2(B_4)^{d\times d}\times L^2(B_4)$),
there exist uniquely determined functions
$$
g=\tilde{f}^1_1, \quad f_1=\bar{A}^{11}\tilde{f}_1+\tilde{g} e_1, \quad  f_\alpha=\bar{A}^{\alpha 1} \tilde{f}_1+\tilde{f}_\alpha, \quad \alpha\in \{2,\ldots,d\},
$$
such that $F(f_1,\ldots,f_d,g)=(\tilde{f}_1,\ldots,\tilde{f}_d,\tilde{g})$.
This implies that $F$ is bijective, and thus, $F$ is invertible.

Define a bounded linear operator $T$ on $L^2(B_4)^{d\times d}\times {L}^2(B_4)$ by
$$
T(\tilde{f}_1, \ldots, \tilde{f}_d, \tilde{g})=(D_1 u, \ldots, D_d u, p),
$$
where $(u,p)\in W^{1,2}_0(B_4)^d\times \tilde{L}^2(B_4)$ is the weak solution of \eqref{170904@eq1} with
$$
(f_1,\ldots,f_d,g)=F^{-1}(\tilde{f}_1,\ldots,\tilde{f}_d,\tilde{g}).
$$
To prove the lemma, it suffices to show that $T$ satisfies the hypothesis of Lemma \ref{170823@lem1} with $\mu=1/2$ and $c=2$.

Fix $x_0\in B_1$ and $0<r<1/2$, and let $\tilde{f}_\alpha\in \tilde{L}^2(B_4)^d$ and $\tilde{g}\in \tilde{L}^2(B_4)$ be supported in $B_r(x_0)$.
Let $(u,p)\in W^{1,2}_0(B_4)^d\times \tilde{L}^2(B_4)$ be the weak solution of \eqref{170904@eq1} with $(f_1,\ldots,f_d,g)=F^{-1}(\tilde{f}_1,\ldots,\tilde{f}_d,\tilde{g})$.
Then we have that
$$
\operatorname{div} u=\tilde{f}^1_1 \quad \text{in }\, B_4
$$
and
\begin{equation}		\label{170904@eq2}
\begin{aligned}
&\int_{B_4} \bar{A}^{\alpha\beta}D_\beta u\cdot D_\alpha \varphi \, dx+\int_{B_4}p \operatorname{div} \varphi\,dx\\
&=\sum_{\alpha=2}^d\int_{B_4} \big(\tilde{f}_\alpha \cdot D_\alpha \varphi+ \bar{A}^{\alpha 1} \tilde{f}_1 \cdot D_\alpha \varphi \big) \, dx
+\int_{B_4} \tilde{g} e_1 \cdot D_1 \varphi \,dx
\end{aligned}
\end{equation}
for any $\varphi \in W^{1,2}_0(B_4)^d$.
Let $R\in [2r,2)$ so that $B_1\setminus B_R(x_0)\neq \emptyset$, and let $\cL^*_0$ be the adjoint operator of $\cL_0$, i.e.,
$$
\cL_0^* v=D_\alpha(\bar{A}^{\alpha\beta}_*D_\beta v), \quad \bar{A}^{\alpha\beta}_*=(\bar{A}^{\beta\alpha})^{\top}.
$$
Then by \cite[Lemma 3.2]{MR3693868}, for given
$$
\phi_\alpha\in C^\infty_0((B_{2R}(x_0)\setminus B_R(x_0)) \cap B_1)^d\quad \text{and} \quad \psi\in C^\infty_0((B_{2R}(x_0)\setminus B_R(x_0)) \cap B_1),
$$
there exists a unique $(v,\pi)\in W^{1,2}_0(B_4)^d\times \tilde{L}^2(B_4)$ satisfying
\begin{equation}		\label{170904@eq3}
\left\{
\begin{aligned}
\cL_0^* v + \nabla \pi = D_\alpha \phi_\alpha & \quad \text{in }\, B_4,\\
\operatorname{div} v = \psi - (\psi)_{B_4}& \quad \text{in }\, B_4,
\end{aligned}
\right.
\end{equation}
and
\begin{equation}		\label{171124@eq5}
\|Dv\|_{L^2(B_4)}\lesssim_{d,\lambda} \||\phi_\alpha|+|\psi|\|_{L^2((B_{2R}(x_0)\setminus B_R(x_0))\cap B_1)}.
\end{equation}
By setting $\varphi =v$ in \eqref{170904@eq2} and applying $u$ as a test function to \eqref{170904@eq3}, we have
$$
\int_{B_4} (D_\alpha u \cdot  \phi_\alpha + p \psi )\,dx
 = \sum_{\alpha=2}^d \int_{B_4} \tilde{f}_\alpha \cdot D_\alpha v \,dx
+\int_{B_4} \tilde{f}_1 \cdot V\, dx
+\int_{B_4} \tilde{g} D_1 v^1 \,dx,
$$
where $V=\bar{A}^{1\beta}_*D_\beta v+\pi e_1$.
Since $\tilde{f}_\alpha\in \tilde{L}^2(B_4)^d$ and $\tilde{g}\in \tilde{L}^2(B_4)$ are supported in $B_r(x_0)$, we get
\begin{equation}		\label{170904@eq8}
\begin{aligned}
&\int_{B_4} (D_\alpha u \cdot  \phi_\alpha + p \psi )\,dx  = \sum_{\alpha=2}^d \int_{B_r(x_0)} \tilde{f}_\alpha \cdot \big(D_\alpha v - (D_\alpha v)_{B_r(x_0)}\big)\,dx\\
&\quad +\int_{B_r(x_0)} \tilde{f}_1 \cdot \big(V-(V)_{B_r(x_0)} \big)\, dx+\int_{B_r(x_0)} \tilde{g} \big(D_1 v^1-(D_1 v^1)_{B_r(x_0)}\big) \,dx.
\end{aligned}
\end{equation}
Observe that
$$
\left\{
\begin{aligned}
\cL_0^* v + \nabla \pi = 0 & \quad \text{in }\, B_R(x_0),\\
\operatorname{div} v = -(\psi)_{B_4} & \quad \text{in }\, B_R(x_0).
\end{aligned}
\right.
$$
Since
$$
D_1v^1=-\sum_{\alpha=2}^d D_\alpha v^\alpha-(\psi)_{B_4},
$$
we have
$$
[D_1 v^1]_{C^{0,1}(B_r(x_0))}\lesssim_d [D_{x'}v]_{C^{0,1}(B_r(x_0))}.
$$
Hence, it follows from \eqref{170908@eq1a} and \eqref{171124@eq5} that
\begin{align*}
&[V]_{C^{0,1}(B_{r}(x_0))}+[D_{x'}v]_{C^{0,1}(B_{r}(x_0))}+[D_1v^1]_{C^{0,1}(B_r(x_0))}\\
&\lesssim R^{-d/2-1}\|Dv\|_{L^2(B_R(x_0))}\\
&\lesssim R^{-d/2-1}\||\phi_\alpha|+|\psi|\|_{L^2((B_{2R}(x_0)\setminus B_R(x_0))\cap B_1)}.
\end{align*}
This together with \eqref{170904@eq8} yields that
\begin{equation}		\label{171108@eq1}
\begin{aligned}
&\bigg|\int_{(B_{2R}(x_0)\setminus B_R(x_0))\cap B_1} (D_\alpha u \cdot  \phi_\alpha   + p \psi ) \,dx\bigg|\\
&\lesssim
r R^{-d/2-1}
 M \||\phi_\alpha|+|\psi|\|_{L^2((B_{2R}(x_0)\setminus B_R(x_0))\cap B_1)},
\end{aligned}
\end{equation}
where
$$
M:=\int_{B_r(x_0)} (|\tilde{f}_\alpha|+|\tilde{g}|)\,dx.
$$
By the duality and H\"older's inequality, we obtain
$$
\int_{(B_{2R}(x_0)\setminus B_R(x_0))\cap B_1} (|Du|+|p|)\,dx \lesssim  \frac{r}{R} M.
$$
Let $N$ be the smallest positive integer such that $B_1\subset B_{2^N r}(x_0)$.
By taking $R=2^ir$, $i\in \{1,2,\ldots, N-1\}$, we have
$$
\int_{B_1\setminus B_{2r}(x_0)} (|Du|+|p|) \, dx \lesssim \sum_{k=1}^{N-1} 2^{-k} M \lesssim M,
$$
which implies that the map $T$ satisfies the hypothesis of Lemma \ref{170823@lem1}.
The lemma is proved.
\end{proof}

\section{Proof of Theorems \ref{M1}, \ref{M0}, \ref{M3}, and \ref{M2}}		\label{Sec4}

\subsection{Proof of Theorem \ref{M1}}		\label{171002@sec2}
We adapt the arguments in the proof of \cite[Theorem 1.5]{MR3620893}.
We shall derive a priori estimates for $(u,p)$ under the assumption that $A^{\alpha\beta}$, $f_\alpha$, and $g$ are sufficiently smooth, so that $(u,p)\in C^1(\overline{B_4})^d\times C(\overline{B_4})$.
The general case follows from a standard approximation argument (see \cite[pp. 134--135]{MR2927619}) and the $W^{1,q_0}_0$-solvability of the Stokes system (see \cite[Corollary 5.3]{MR3693868}).

Throughout the proof, we set $q=1/2$ and
$$
\Phi(x_0, r):=\inf_{\substack{\theta\in \bR^d \\ \Theta\in \bR^{d\times (d-1)}}}\bigg(\dashint_{B_{r}(x_0)}|\hat{U}-\theta|^q+|D_{x'}u-\Theta|^q\,dx\bigg)^{1/q},
$$
where $\hat{U}=A^{1\beta}D_\beta u+p e_1-f_1$.
For $\gamma\in (0,1)$, $\kappa\in (0,1/2]$, $m\in \{3,4\}$, and $f\in L^1(B_6)$, we denote
\begin{equation}		\label{171118@eq1}
\begin{aligned}
\omega_{f,x',B_m}(r)&:=\sup_{x\in B_m}\dashint_{B_r(x)}\bigg|f(y)-\dashint_{B_r'(x')}f(y_1,z')\,dz'\bigg|\,dy,\\
\tilde{\omega}_{f,x',B_m}(r)&:=\sum_{i=1}^\infty \kappa^{\gamma i} \big(\omega_{f,x',B_m}(\kappa^{-i} r )[\kappa^{-i}r<1]+\omega_{f,x',B_m}(1)[\kappa^{-i} r \ge 1]\big),
\end{aligned}
\end{equation}
where we used Iverson bracket notation, i.e., $[P]=1$ if $P$ is true and $[P]=0$ otherwise.

\begin{lemma}		\label{171020@lem1z}
Let $\gamma\in (0,1)$.
Under the same hypothesis of Theorem \ref{M1} $(a)$,
there exists a constant $\kappa\in (0,1/2]$ depending only on $d$, $\lambda$, and $\gamma$, such that  the following hold.
\begin{enumerate}[$(i)$]
\item
For any $x_0\in B_3$ and $0<r\le 1/4$, we have
\begin{equation}		\label{171023@eq9}
\begin{aligned}
\sum_{j=0}^\infty \Phi(x_0, \kappa^j r) &\lesssim_{d,\lambda, \gamma} \Phi(x_0,r)+\|Du\|_{L^\infty(B_r(x_0))}\int_0^r \frac{\tilde{\omega}_{A^{\alpha\beta},x',B_4}(t)}{t}\,dt\\
&\quad +\int_0^r \frac{\tilde{\omega}_{f_\alpha,x',B_4}(t)+\tilde{\omega}_{g,x',B_4}(t)}{t}\,dt,
\end{aligned}
\end{equation}
where each integration is finite; see Lemma \ref{171118@lem1} $(a)$.
\item
For any $x_0\in B_3$ and $0<\rho\le r \le  1/4$, we have
\begin{equation}	\label{171023@1a}
\begin{aligned}
\Phi(x_0,\rho) &\lesssim_{d,\lambda,\gamma} \left(\frac{\rho}{r}\right)^\gamma \Phi(x_0,r)+ \|Du\|_{L^\infty(B_r(x_0))}\tilde{\omega}_{A^{\alpha\beta},x',B_3}(\rho)\\
&\quad +\tilde{\omega}_{f_\alpha,x',B_3}(\rho)+\tilde{\omega}_{g,x',B_3}(\rho).
\end{aligned}
\end{equation}
\end{enumerate}
\end{lemma}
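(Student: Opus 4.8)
The plan is to run a Campanato-type iteration in the spirit of \cite{MR3620893}, but with the coefficients frozen only in the $x_1$-variable and with $\hat U$ playing the role of the ``conormal derivative'' adapted to that freezing. First I would assume, via the approximation argument of \cite[pp.~134--135]{MR2927619} together with the $W^{1,q_0}_0$-solvability of the Stokes system \cite[Corollary~5.3]{MR3693868}, that $A^{\alpha\beta}$, $f_\alpha$, $g$ are smooth, so that $(u,p)\in C^1(\overline{B_4})^d\times C(\overline{B_4})$ and every quantity below is finite a priori. The lemma then reduces to the single-scale estimate that there are $\kappa=\kappa(d,\lambda,\gamma)\in(0,1/2]$ and $C=C(d,\lambda,\gamma)$ with
\begin{equation}
\Phi(x_0,\kappa\rho)\le\tfrac12\kappa^\gamma\Phi(x_0,\rho)+C\Big(\|Du\|_{L^\infty(B_\rho(x_0))}\,\omega_{A^{\alpha\beta},x',B_4}(\rho)+\omega_{f_\alpha,x',B_4}(\rho)+\omega_{g,x',B_4}(\rho)\Big)
\tag{$\ast$}
\end{equation}
for all $x_0\in B_3$ and $0<\rho\le1/4$, the right-hand bracket possibly carrying a fixed finite number of extra dyadic scales lying in $(\kappa\rho,\rho)$. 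Granting $(\ast)$: iterating it along $\rho=\kappa^j r$ and summing the convergent series $\sum_j(\tfrac12\kappa^\gamma)^j$ gives $(i)$ once one passes from $\sum_j\omega_{\bullet,x',B_4}(\kappa^j r)$ to $\int_0^r t^{-1}\tilde\omega_{\bullet,x',B_4}(t)\,dt$; for $(ii)$, choosing $m$ with $\kappa^{m+1}r<\rho\le\kappa^m r$, iterating $(\ast)$ $m$ times, and using the almost-monotonicity $\Phi(x_0,\rho)\lesssim_{d,\lambda,\gamma}\Phi(x_0,\kappa^m r)$, one gets the $(\rho/r)^\gamma$-decay with error $\tilde\omega_{\bullet,x',B_3}(\rho)$. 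In both steps the required finiteness of the integrals and the bookkeeping identities for $\tilde\omega$ (monotonicity, $\tilde\omega(\kappa s)\le\kappa^\gamma(\omega(s)+\tilde\omega(s))$, and the resulting comparisons of dyadic sums with integrals) are exactly what Lemma~\ref{171118@lem1} provides.

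To prove $(\ast)$, fix $x_0\in B_3$, $0<\rho\le1/4$, and set $\bar A^{\alpha\beta}(x_1):=\dashint_{B_\rho'(x_0')}A^{\alpha\beta}(x_1,z')\,dz'$. Rewrite the system on $B_\rho(x_0)$ as $\cL_0 u+\nabla p=D_\alpha\tilde f_\alpha$, $\operatorname{div}u=g$, where $\tilde f_\alpha:=f_\alpha+(\bar A^{\alpha\beta}-A^{\alpha\beta})D_\beta u$; with this choice $\hat U=\bar A^{1\beta}D_\beta u+pe_1-\tilde f_1$, i.e.\ $\hat U$ is precisely the $x_1$-conormal quantity for the frozen operator, and the $\bar A$-versus-$A$ discrepancy has vanished. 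Decompose $(u,p)=(v,\pi)+(w,\sigma)$ on $B_\rho(x_0)$: let $(w,\sigma)\in W^{1,2}_0(B_{4\rho}(x_0))^d\times\tilde L^2(B_{4\rho}(x_0))$ solve the Dirichlet problem for $\cL_0$ whose data are the purely $x'$-oscillatory parts of $\tilde f_\alpha$ and $g$, namely $\tilde f_\alpha-\langle\tilde f_\alpha\rangle(x_1)$ and $g-\langle g\rangle(x_1)$ restricted to $B_\rho(x_0)$ and corrected by constants so as to lie in $\tilde L^2$, where $\langle h\rangle(x_1):=\dashint_{B_\rho'(x_0')}h(x_1,z')\,dz'$; then $(v,\pi):=(u-w,p-\sigma)$ satisfies $\cL_0 v+\nabla\pi=D_1\langle\tilde f_1\rangle(x_1)$ and $\operatorname{div}v=\langle g\rangle(x_1)+\text{const}$ on $B_\rho(x_0)$. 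Split once more $(v,\pi)=(v_1,\pi_1)+(v_2,\pi_2)$, where $(v_2,\pi_2)$ is the \emph{explicit one-dimensional} solution of this last problem (it depends on $x_1$ alone, obtained by solving a linear ODE using the positive definiteness of $\bar A^{11}$), so that $(v_1,\pi_1)$ satisfies \eqref{170908@eq2} on $B_\rho(x_0)$ with $\ell=0$.

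For $(v_2,\pi_2)$: since $D_\beta v_2=0$ for $\beta\ge2$, the ODE forces $\bar A^{1\beta}D_\beta v_2+\pi_2 e_1-\langle\tilde f_1\rangle(x_1)\equiv\text{const}$ and $D_{x'}v_2\equiv0$, so this piece contributes nothing to $\Phi(x_0,\kappa\rho)$. For $(v_1,\pi_1)$: Lemma~\ref{170908@lem1} (on $B_\rho(x_0)$, with $r=\kappa\rho\le\rho/2$) bounds the mean oscillation over $B_{\kappa\rho}(x_0)$ of $V_1:=\bar A^{1\beta}D_\beta v_1+\pi_1 e_1$ and of $D_{x'}v_1$ by $\lesssim_{d,\lambda}\kappa\,\Phi_{v_1}(x_0,\rho)$, where $\Phi_{v_1}$ is $\Phi$ with $(\hat U,D_{x'}u)$ replaced by $(V_1,D_{x'}v_1)$; since $\hat U=V_1-(\tilde f_1-\langle\tilde f_1\rangle(x_1))+(\bar A^{1\beta}D_\beta w+\sigma e_1)+\text{const}$ and $D_{x'}u=D_{x'}v_1+D_{x'}w$, one checks $\Phi_{v_1}(x_0,\rho)\lesssim\Phi(x_0,\rho)+$ (the $w$- and data-terms below). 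For $(w,\sigma)$: rescaling $B_{4\rho}(x_0)\to B_4$ sends the data support into $B_1$, matching the hypotheses of Lemma~\ref{170904@lem1}, whose weak type-$(1,1)$ bound is scale invariant for divergence-form data; the elementary passage from weak-$(1,1)$ to $L^q$ with $q=1/2$ then gives $\big(\dashint_{B_{\kappa\rho}(x_0)}|Dw|^q+|\sigma|^q\,dx\big)^{1/q}\lesssim_{d,\lambda}\kappa^{-d}\dashint_{B_\rho(x_0)}\big(|\tilde f_\alpha-\langle\tilde f_\alpha\rangle(x_1)|+|g-\langle g\rangle(x_1)|\big)\,dx$, and by $|A^{\alpha\beta}|\le\lambda^{-1}$ together with the definition of $\langle\cdot\rangle$ the right-hand side is $\lesssim\kappa^{-d}\big(\|Du\|_{L^\infty(B_\rho(x_0))}\omega_{A^{\alpha\beta},x',B_4}(\rho)+\omega_{f_\alpha,x',B_4}(\rho)+\omega_{g,x',B_4}(\rho)\big)$; the residual $\tilde f_1-\langle\tilde f_1\rangle(x_1)$ entering $\hat U$ is handled the same way, the $\kappa\rho$-versus-$\rho$ mismatch in $\langle\cdot\rangle$ costing only finitely many extra scales in $(\kappa\rho,\rho)$. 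Combining the three estimates and absorbing constants yields $\Phi(x_0,\kappa\rho)\lesssim_{d,\lambda}\kappa\,\Phi(x_0,\rho)+\kappa^{-d}(\text{data terms})$, and fixing $\kappa=\kappa(d,\lambda,\gamma)$ so that the first constant times $\kappa$ does not exceed $\tfrac12\kappa^\gamma$ gives $(\ast)$.

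The main obstacle is the decomposition itself. Because $f_1$, $g$, and the $x_1$-behaviour of the coefficients carry no quantitative regularity, one cannot subtract mere constants as in the elliptic case of \cite{MR3620893}: one must subtract the $x_1$-dependent cross-sectional averages $\langle\cdot\rangle(x_1)$, which is what forces the auxiliary split $(v,\pi)=(v_1,\pi_1)+(v_2,\pi_2)$ and what makes the precise form $\hat U=A^{1\beta}D_\beta u+pe_1-f_1$ indispensable — it is exactly this choice that renders the one-dimensional piece $(v_2,\pi_2)$ and the leftover $x_1$-only data invisible to $\Phi$, leaving only $x'$-oscillations (controlled by $\omega_{\bullet,x'}$) and the $W^{1,2}_0$-correction $w$ (controlled by the weak type-$(1,1)$ estimate of Lemma~\ref{170904@lem1}); this is the point where the idea of \cite{MR2927619} enters. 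Everything else — tracking the infimum constants in $\Phi$ through the $L^{1/2}$ quasi-triangle inequality, the radius mismatches in $\langle\cdot\rangle$, and the mild enlargement of balls when passing between a ball and a coaxial cylinder — is routine and only ever costs a fixed finite number of additional scales, all absorbed into $\tilde\omega$ through Lemma~\ref{171118@lem1}.
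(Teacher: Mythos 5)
Your argument is correct and follows essentially the same route as the paper: freeze the coefficients in $x_1$ by cross-sectional averaging, peel off a one-dimensional solution so that the residual data are $x'$-oscillatory, apply the weak type-$(1,1)$ estimate (Lemma \ref{170904@lem1}) to the $W^{1,2}_0$-correction and Lemma \ref{170908@lem1} to the homogeneous part, then iterate the resulting one-step inequality with $\kappa$ chosen so that $C_0\kappa\le\kappa^\gamma$. The only differences are presentational (you split $u=v_1+v_2+w$ rather than first subtracting $\int u_0\,dx_1$ and then decomposing $u_e=v+w$, which is the same thing) and a harmless typo in the exponent coming from the quasi-norm step, which should be $\kappa^{-d/q}=\kappa^{-2d}$ rather than $\kappa^{-d}$, but as you note this factor is absorbed into the constant once $\kappa=\kappa(d,\lambda,\gamma)$ is fixed.
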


\begin{proof}
Fix $x_0=(x_{01},x_0')\in B_3$ and $0<r \le 1/4$.
For a given function $f$, we denote
$$
\bar{f}(x_1):=\dashint_{B_r'(x_0')}f(x_1,y')\,dy'.
$$
Define an elliptic operator $\cL_0$ by
$$
\cL_0 u=D_\alpha(\bar{A}^{\alpha\beta}(x_1)D_\beta u).
$$
Let $u_0=(u_0^1(x_1),\ldots,u_0^d(x_1))^{\top}$ and $p_0=p_0(x_1)$ be functions satisfying
$$
u_0^1=\bar{g}, \quad \bar{A}^{11} u_0+p_0 e_1= \bar{f}_1.
$$
Then the pair $(u_e, p_e)$ given by
$$
u_e=u-\int_{-6}^{x_1} u_0 \,d y_1 \quad \text{and} \quad p_e = p-p_0
$$
satisfies
$$
\left\{
\begin{aligned}
\cL_0 u_e+\nabla p_e=D_\alpha F_\alpha \quad \text{in }\, B_6, \\
\operatorname{div} u_e=G \quad \text{in }\, B_6,
\end{aligned}
\right.
$$
where $F_\alpha=(\bar{A}^{\alpha\beta}-A^{\alpha\beta})D_\beta u+f_\alpha-\bar{f}_\alpha$ and $G=g-\bar{g}$.
We decompose
\begin{equation}		\label{171023_D3}
(u_e,p_e)=(w,p_1)+(v,p_2),
\end{equation}
where $(w, p_1)\in W^{1,2}_0(B_{4r}(x_0))^d\times \tilde{L}^2(B_{4r}(x_0))$ is the weak solution of the problem
$$
\left\{
\begin{aligned}
\cL_0 w+\nabla p_1=D_\alpha \big(I_{B_{r}(x_0)} F_\alpha \big) \quad \text{in }\, B_{4r}(x_0),\\
\operatorname{div} w=I_{B_{r}(x_0)} G-\big(I_{B_{r}(x_0)} G\big)_{B_{4r}(x_0)} \quad \text{in }\, B_{4r}(x_0).
\end{aligned}
\right.
$$
Here, $I_{B_r(x_0)}$ is the characteristic function.
By Lemma \ref{170904@lem1} with scaling, we have for $t>0$ that
$$
\big|\{x\in B_{r}(x_0): |Dw(x)|+|p_1(x)|>t\}\big| \lesssim_{d,\lambda} \frac{1}{t} \int_{B_{r}(x_0)} |F_\alpha|+|G| \, dx.
$$
Then we obtain for $\tau>0$ that
\begin{align*}
&\int_{B_{r}(x_0)}(|Dw|+|p_1|)^q \, dx \\
&=\bigg(\int_0^\tau+\int_\tau ^\infty \bigg) q t^{q-1} \big|\{x\in B_{r}(x_0): |Dw(x)|+|p_1(x)|>t\}\big| \, dt\\
&\lesssim  |B_{r}| \tau^q +  \bigg(\int_{B_{r}(x_0)} |F_\alpha|+|G| \,dx\bigg) \tau^{q-1}.
\end{align*}
By optimizing over $\tau$ and taking the $q$-th root, we have
\begin{equation}		\label{170913@eq2}
\bigg(\dashint_{B_{r}(x_0)}(|Dw|+|p_1|)^q \, dx\bigg)^{1/q} \lesssim \dashint_{B_{r}(x_0)} |F_\alpha|+|G|\,dx.
\end{equation}
Observe that $(v,p_2)=(u_e,p_e)-(w,p_1)$ satisfies
$$
\left\{
\begin{aligned}
\cL_0 v+\nabla p_2=0 & \quad \text{in }\, B_r(x_0),\\
\operatorname{div} v= \big(I_{B_r(x_0)}G\big)_{B_{4r}(x_0)} & \quad \text{in }\, B_r(x_0).
\end{aligned}
\right.
$$
Then by \eqref{171110@eq1}, we obtain for $\kappa\in (0,1/2]$ that
\begin{equation}		\label{171030@eq8}
\begin{aligned}
&\bigg(\dashint_{B_{\kappa r}(x_0)}|V-(V)_{B_{\kappa r}(x_0)}|^q +|D_{x'}v -(D_{x'}v)_{B_{\kappa r}(x_0)}|^q \, dx\bigg)^{1/q}\\
&\lesssim_{d,\lambda}
\kappa
\inf_{\substack{\theta\in \bR^d \\ \Theta\in \bR^{d\times (d-1)}}} \bigg(\dashint_{B_r(x_0)}|V-\theta |^q +|D_{x'}v -\Theta|^q \, dx\bigg)^{1/q},
\end{aligned}
\end{equation}
where $V=\bar{A}^{1\beta}D_\beta v+p_2 e_1$.
Notice from the decomposition \eqref{171023_D3} that
$$
\begin{aligned}
& \bigg(\dashint_{B_{\kappa r}(x_0)}|U_e-(V)_{B_{\kappa r}(x_0)}|^q+|D_{x'}u_e-(D_{x'}v)_{B_{\kappa r}(x_0)}|^q\,dx\bigg)^{1/q}\\
&\lesssim_{d,\lambda} \bigg(\dashint_{B_{\kappa r}(x_0)}|V-(V)_{B_{\kappa r}(x_0)}|^q+|D_{x'}v-(D_{x'}v)_{B_{\kappa r}(x_0)}|^q\,dx\bigg)^{1/q}\\
&\quad + \bigg(\dashint_{B_{\kappa r}(x_0)} (|Dw|+|p_1|)^q\,dx\bigg)^{1/q},
\end{aligned}
$$
where $U_e=\bar{A}^{1\beta} D_\beta u_e+p_e e_1$.
Using this together with \eqref{170913@eq2} and \eqref{171030@eq8}, we have
$$
\begin{aligned}
& \bigg(\dashint_{B_{\kappa r}(x_0)}|U_e-(V)_{B_{\kappa r}(x_0)}|^q+|D_{x'}u_e-(D_{x'}v)_{B_{\kappa r}(x_0)}|^q\,dx\bigg)^{1/q}\\
&\lesssim_{d,\lambda}
\kappa  \inf_{\substack{\theta\in \bR^d \\ \Theta\in \bR^{d\times (d-1)}}}\bigg(\dashint_{B_{r}(x_0)}|U_e-\theta|^q+|D_{x'}u_e-\Theta|^q\,dx\bigg)^{1/q}\\
&\quad + \kappa^{-d/q}\dashint_{B_r(x_0)}|F_\alpha|+|G|\,dx.
\end{aligned}
$$
Therefore, from the fact that
$$
D_{x'} u_e=D_{x'} u, \quad |\hat{U}-U_e|\lesssim |A^{1\beta}-\bar{A}^{1\beta}|| D_\beta u | +|f_1-\bar{f}_1|,
$$
we get
$$
\begin{aligned}
\Phi(x_0,\kappa r) &\le C_0
\kappa \Phi(x_0, r) + C_0 \kappa^{-d/q}\|Du\|_{L^\infty(B_{r}(x_0))} \omega_{A^{\alpha\beta},x',B_3}(r)\\
&\quad  +C_0\kappa^{-d/q} \big(\omega_{f_\alpha,x',B_3}(r)+\omega_{g,x',B_3}(r)\big),
\end{aligned}
$$
where $C_0=C_0(d,\lambda)>0$.
Taking $\kappa=\kappa(d,\lambda,\gamma)\in (0,1/2]$ so that
$C_0 \kappa^{1-\gamma}\le 1$,
we have
\begin{equation}		\label{171023@eq3}
\begin{aligned}
\Phi(x_0,\kappa r) &\le \kappa^{\gamma} \Phi(x_0, r)+ C\|Du\|_{L^\infty(B_{r}(x_0))} \omega_{A^{\alpha\beta},x',B_3}(r)\\
&\quad +C\big(\omega_{f_\alpha,x',B_3}(r)+\omega_{g,x',B_3}(r)\big),
\end{aligned}
\end{equation}
where $C=C(d,\lambda,\gamma)$.
By iterating, we see that
\begin{equation}		\label{170926@eq1}
\begin{aligned}
\Phi(x_0, \kappa^j r) & \le \kappa^{\gamma j}\Phi(x_0,r) +C\|Du\|_{L^\infty(B_{ r}(x_0))} \tilde{\omega}_{A^{\alpha\beta},x',B_3}(\kappa^j r)\\
&\quad +C\big(\tilde{\omega}_{f_\alpha,x',B_3}(\kappa^j r)+\tilde{\omega}_{g,x',B_3}(\kappa^j r)\big),
\end{aligned}
\end{equation}
where we used the fact that
$$
\sum_{i=1}^j \kappa^{\gamma(i-1)} \omega_{\bullet,x',B_3}(\kappa^{j-i}r)
\le
\kappa^{-\gamma} \tilde{\omega}_{\bullet,x',B_3}(\kappa^j r).
$$
Taking the summations of both sides of \eqref{170926@eq1} with respect to $j=0,1,2,\ldots$, and using Lemma \ref{171118@lem1} $(a)$, we get \eqref{171023@eq9}.

For any $\rho\in (0, r]$, we take an integer $j$ such that
$$
\kappa^{j+1}<\frac{\rho}{r}\le \kappa^j.
$$
If $j=0$, then obviously we have
$$
\Phi(x_0,\rho)\lesssim_{d,\kappa}\Phi(x_0,r)\lesssim_{d,\kappa,\gamma} \left(\frac{\rho}{r}\right)^\gamma \Phi(x_0,r),
$$
which implies \eqref{171023@1a}.
On the other hand, if $j\ge 1$,
then by using \eqref{170926@eq1} with $\rho$ in place of $\kappa^j r$, we obtain
$$
\begin{aligned}
\Phi(x_0, \rho) &\lesssim \left(\frac{\rho}{r}\right)^\gamma \Phi(x_0,r)  + \|Du\|_{L^\infty(B_{r}(x_0))} \tilde{\omega}_{A^{\alpha\beta},x',B_3}(\rho)\\
&\quad +\tilde{\omega}_{f_\alpha,x',B_3}(\rho)+\tilde{\omega}_{g,x',B_3}(\rho).
\end{aligned}
$$
This gives \eqref{171023@1a}.
The lemma is proved.
\end{proof}

Now we are ready to prove the assertion (a) in the theorem.

\begin{proof}[Proof of Theorem \ref{M1} $(a)$]
In this proof, we fix $\gamma \in (0,1)$, and let $\kappa=\kappa(d,\lambda,\gamma)$ be the constant from Lemma \ref{171020@lem1z}.
We denote
$$
\cU=|\hat{U}|+|D_{x'}u|, \quad \tilde{\omega}_{\bullet,x'}=\tilde{\omega}_{\bullet,x',B_4}, \quad \cF(r)=\int_0^r \frac{\tilde{\omega}_{f_{\alpha},x'}(t)+\tilde{\omega}_{g,x'}(t)}{t}\,dt.
$$

We first derive $L^\infty$-estimates for $Du$ and $p$.
Let $x_0\in B_3$ and $0<r \le 1/4$.
We take $\theta_{x_0,r}\in \bR^d$ and $\Theta_{x_0, r}\in \bR^{d\times (d-1)}$ to be such that
$$
\Phi(x_0, r)=\bigg(\dashint_{B_r(x_0)} \big|\hat{U}-\theta_{x_0,r}\big|^q+|D_{x'}u-\Theta_{x_0,r}\big|^q\,dx\bigg)^{1/q}.
$$
Similarly, we find $\theta_{x_0,\kappa^i r}\in \bR^d$ and $\Theta_{x_0, \kappa^i r}\in \bR^{d\times (d-1)}$ for $i\in \{1,2,\ldots\}$.
Recall the assumption that $A^{\alpha\beta}$ and $f_\alpha$ are sufficiently smooth, so that $(u,p)\in C^1(\overline{B_4})^d\times C(\overline{B_4})$.
Thus, since the right-hand side of \eqref{170926@eq1} goes to zero as $j\to \infty$, we see that
\begin{equation}		\label{171031@eq1}
\lim_{i\to \infty} \theta_{x_0,\kappa^i r}=\hat{U}(x_0), \quad \lim_{i\to \infty}\Theta_{x_0, \kappa^i r}=D_{x'}u(x_0).
\end{equation}
By averaging the inequality
$$
|\theta_{x_0,\kappa r}-\theta_{x_0,r}|^q\le |\hat{U}-\theta_{x_0,\kappa r}|^q+|\hat{U}-\theta_{x_0,r}|^q
$$
on $B_{\kappa r}(x_0)$ and taking the $q$-th root, we have
$$
|\theta_{x_0,\kappa r}-\theta_{x_0,r}|\lesssim \Phi(x_0,\kappa r)+\Phi(x_0, r).
$$
We apply the above inequality iteratively and use \eqref{171031@eq1} to get
\begin{equation}		\label{171120@eq2}
|\hat{U}(x_0)-\theta_{x_0,r}|\lesssim \sum_{j=0}^\infty \Phi(x_0, \kappa^j r).
\end{equation}
Averaging the inequality
$$
|\theta_{x_0,r}|^q\le |\hat{U}-\theta_{x_0,r}|^q+|\hat{U}|^q
$$
on $B_{r}(x_0)$ and taking the $q$-th root, we obtain that
$$
|\theta_{x_0,r}|\lesssim \Phi(x_0,r)+r^{-d}\|\hat{U}\|_{L^1(B_r(x_0))}\lesssim r^{-d}\|\cU\|_{L^1(B_r(x_0))}.
$$
By Lemma \ref{171020@lem1z} $(i)$, \eqref{171120@eq2}, and the above inequality, we have
$$
|\hat{U}(x_0)|\lesssim r^{-d}\|\cU\|_{L^1(B_r(x_0))} +\|Du\|_{L^\infty(B_r(x_0))} \int_0^r \frac{\tilde{\omega}_{A^{\alpha\beta},x'}(t)}{t}\,dt+\cF(r).
$$
Since the same inequality holds for $|D_{x'}u(x_0)|$, we have
\begin{equation}		\label{171031@eq1c}
\cU(x_0)\lesssim r^{-d}\|\cU\|_{L^1(B_r(x_0))}+\|Du\|_{L^\infty(B_r(x_0))} \int_0^r \frac{\tilde{\omega}_{A^{\alpha\beta},x'}(t)}{t}\,dt+\cF(r).
\end{equation}
Notice from the definition of $\hat{U}$ that
$$
\sum_{j=2}^{d} {A}^{11}_{ij} D_1 u^j=\hat{U}^i - \sum_{j=1}^d \sum_{\beta=2}^d {A}^{1\beta}_{ij}D_\beta u^j-{A}^{11}_{i1}D_1 u^1+f_1^i, \quad i\in \{2,\ldots,d\}.
$$
By the ellipticity condition on ${A}^{\alpha\beta}$, $[{A}^{11}_{ij}]_{i, j=2}^d$ is nondegenerate.
This together with
\begin{equation}		\label{171207@eq1}
D_1u^1=g-\sum_{i=2}^d D_i u^i
\end{equation}
gives
$$
|Du|\le |D_1 u^1 | + \sum_{j=2}^d |D_1 u^j |+|D_{x'}u|\lesssim \cU+|f_1|+|g|.
$$
Therefore, we obtain by \eqref{171031@eq1c} that
\begin{equation}		\label{171024@eq5}
\begin{aligned}
\cU(x_0) &\le C_1 r^{-d}\|\cU \|_{L^1(B_r(x_0))}+C_1\|\cU\|_{L^\infty(B_r(x_0))} \int_0^r \frac{\tilde{\omega}_{A^{\alpha\beta},x'}(t)}{t}\,dt\\
&\quad +C_1\||f_1|+|g|\|_{L^\infty(B_r(x_0))}\int_0^r \frac{\tilde{\omega}_{A^{\alpha\beta},x'}(t)}{t}\,dt + C_1\cF(r),
\end{aligned}
\end{equation}
where $C_1=C_1(d,\lambda,\gamma)$.
Taking $r_0 \in (0, 1/4]$ so that
\begin{equation}		\label{180305@eq1}
C_1\int_0^{r_0} \frac{\tilde{\omega}_{A^{\alpha\beta},x'}(t)}{t}\,dt\le \frac{1}{3^d},
\end{equation}
we have for $x_0\in B_3$ and $0<r\le r_0$ that
\begin{equation}		\label{171127@eq1}
\begin{aligned}
\cU(x_0) &\le C_1 r^{-d}\|\cU \|_{L^1(B_r(x_0))}+3^{-d}\|\cU\|_{L^\infty(B_r(x_0))} \\
&\quad +3^{-d}\||f_1|+|g|\|_{L^\infty(B_r(x_0))} + C_1\cF(r).
\end{aligned}
\end{equation}
Here, the constant $r_0$ depends only on $d$, $\lambda$, $\gamma$, and $\omega_{A^{\alpha\beta},x'}$.

For $k\in \{1,2,\ldots\}$, we denote $r_k=3-2^{1-k}$.
Since $r_{k+1}-r_k=2^{-k}$, we have $B_{r}(x_0)\subset B_{r_{k+1}}$ for any $x_0\in B_{r_k}$ and $r= 2^{-k}$.
We take $k_0$ sufficiently large such that $2^{-k_0}\le r_0$.
Then by \eqref{171127@eq1} with $r=2^{-k}$, we have for $k\ge k_0$ that
$$
\begin{aligned}
 \|\cU\|_{L^\infty(B_{r_k})} &\le C_1 2^{dk}\|\cU\|_{L^1(B_{6})}+3^{-d}\|\cU\|_{L^\infty(B_{r_{k+1}})}\\
&\quad +3^{-d}\||f_1|+|g|\|_{L^\infty(B_{6})}+C_1\cF(1).
\end{aligned}
$$
By multiplying both sides of the above inequality by $3^{-dk}$ and summing the terms with respect to $k=k_0,k_0+1,\ldots$, we see that
$$
\begin{aligned}
\sum_{k=k_0}^\infty 3^{-dk} \|\cU\|_{L^\infty(B_{r_k})} &\le C \|\cU\|_{L^1(B_{6})}+\sum_{k=k_0+1}^\infty 3^{-dk}\|\cU\|_{L^\infty(B_{r_{k}})}\\
&\quad +C \||f_1|+|g|\|_{L^\infty(B_{6})}+C\cF(1),
\end{aligned}
$$
where each summation is finite and $C=C(d,\lambda, \gamma)$.
By subtracting
$$
\sum_{k=k_0+1}^\infty 3^{-dk}\|\cU\|_{L^\infty(B_{r_k})}
$$
from both sides of the above inequality, we have
$$
\|\cU\|_{L^\infty(B_2)} \lesssim_{d,\lambda,\gamma,\omega_{A^{\alpha\beta},x'}} \|\cU\|_{L^1(B_6)}+\||f_1|+|g|\|_{L^\infty(B_{6})}+\cF(1).
$$
Therefore, using \eqref{171207@eq1} and the fact that
$$
|p|\lesssim \cU +|Du|+|f_1|,
$$
we get the following $L^\infty$-estimate for $Du$ and $p$:
\begin{equation}		\label{170925_eq1}
\begin{aligned}
&\|Du\|_{L^\infty(B_2)}+\|p\|_{L^\infty(B_2)} \\
& \le C\big( \|Du\|_{L^1(B_6)}+\|p\|_{L^1(B_6)}+\|f_1\|_{L^\infty(B_6)}+\|g\|_{L^\infty(B_{6})}+\cF(1)\big),
\end{aligned}
\end{equation}
where $C=C(d,\lambda, \gamma, \omega_{A^{\alpha\beta},x'})$.

\begin{remark}		\label{180305@rmk1}
With regard to the dependency of the constant $C$ in \eqref{170925_eq1}, the parameter $\omega_{A^{\alpha\beta},x'}$ can be replaced by a function $\omega_0:(0,1]\to [0, \infty)$ such that
$$
\int_0^r\frac{\tilde{\omega}_{A^{\alpha\beta},x'}(t)}{t}\,dt\le \int_0^r\frac{\omega_0(t)}{t}\,dt<\infty \quad \text{for all }\, r\in (0,1].
$$
Indeed, this can be verified by taking $r_0\in (0,1/4]$ so that
$$
C_1\int_0^{r_0} \frac{\omega_0(t)}{t}\,dt\le \frac{1}{3^d}
$$
in \eqref{180305@eq1}.
\end{remark}

Next, we shall derive estimates of the modulus of continuity of $\hat{U}$ and $D_{x'}u$.
By Lemma \ref{171020@lem1z} $(ii)$, we have for $x_0\in B_1$ and $0<\rho\le 1/4$ that
$$
\Phi(x_0, \rho)\lesssim \rho^{\gamma}\|\cU\|_{L^1(B_6)}+\|Du\|_{L^\infty(B_2)}\int_0^\rho \frac{\tilde{\omega}_{A^{\alpha\beta},x'}(t)}{t}\,dt+\cF(\rho),
$$
where we used the fact that
$$
\Phi(x_0, 1/4)\lesssim \|\cU\|_{L^1(B_6)}, \quad \tilde{\omega}_{\bullet,x',B_3}(\rho) \lesssim \int_0^\rho \frac{\tilde{\omega}_{\bullet,x'}(t)}{t}\,dt.
$$
From this together with Lemma \ref{171020@lem1z} $(i)$, we get
\begin{equation}		\label{171120@A2}
\sum_{j=0}^\infty \Phi(x_0,\kappa^j \rho)\lesssim \rho^{\gamma}\|\cU\|_{L^1(B_6)}+\|Du\|_{L^\infty(B_2)}\int_0^\rho\frac{\tilde{\omega}_{A^{\alpha\beta},x'}(t)}{t}\,dt+\cF(\rho).
\end{equation}
Let $x,y\in {B_1}$ with $\rho:=|x-y|\le 1/4$.
Then for any $z\in B_\rho(x)\cap B_\rho(y)$, we have
\begin{align*}
|\hat{U}(x)-\hat{U}(y)|^q &\le |\hat{U}(x)-\theta_{x,\rho}|^q
+ |\theta_{x,\rho}-\theta_{y,\rho}|^q
+ |\hat{U}(y)-\theta_{y,\rho}|^q\\
&\le 2\sup_{x_0\in {B_1}} |\hat{U}(x_0)-\theta_{x_0,\rho} |^q
+|\hat{U}(z)-\theta_{x,\rho}|^q+|\hat{U}(z)-\theta_{y,\rho}|^q.
\end{align*}
By taking average over $z\in B_\rho(x)\cap B_\rho(y)$ and taking the $q$-th root, we see that
\begin{align*}
|\hat{U}(x)-\hat{U}(y)|&\lesssim \sup_{x_0\in {B_1}} |\hat{U}(x_0)-\theta_{x_0,\rho} |
+\Phi(x,\rho)+\Phi(y,\rho)\\
&\lesssim \sup_{x_0\in B_1}\Bigg(\sum_{j=0}^\infty \Phi(x_0, \kappa^j \rho)+\Phi(x_0,\rho)\Bigg)\lesssim \sup_{x_0\in B_1}\sum_{j=0}^\infty \Phi(x_0, \kappa^j \rho),
\end{align*}
where we used \eqref{171120@eq2} in the second inequality.
Hence we get from \eqref{171120@A2} that
\begin{align*}
|\hat{U}(x)-\hat{U}(y)| &\lesssim |x-y|^{\gamma} \|\cU\|_{L^1(B_6)}\\
&\quad + \|Du\|_{L^\infty(B_2)}\int_0^{|x-y|} \frac{\tilde{\omega}_{A^{\alpha\beta},x'}(t)}{t}\,dt +\cF(|x-y|).
\end{align*}
Similarly, we have the same estimate for $D_{x'}u$.
Thus using \eqref{170925_eq1}, we conclude that
\begin{equation}		\label{171128@A1}
\begin{aligned}
&|\hat{U}(x)-\hat{U}(y)|+|D_{x'}u(x)-D_{x'}u(y)| \\
&\le C |x-y|^{\gamma} \big(\|Du\|_{L^1(B_6)}+\|p\|_{L^1(B_6)}+\|f_1\|_{L^1(B_6)}\big)\\
&\quad + C\big(\|Du\|_{L^1(B_6)}+\|p\|_{L^1(B_6)}\big)\int_0^{|x-y|} \frac{\tilde{\omega}_{A^{\alpha\beta},x'}(t)}{t}\,dt \\
&\quad +C\big(\|f_1\|_{L^\infty(B_6)}+\|g\|_{L^\infty(B_6)}\big)\int_0^{|x-y|} \frac{\tilde{\omega}_{A^{\alpha\beta},x'}(t)}{t}\,dt\\
&\quad +C\cF(1) \int_0^{|x-y|} \frac{\tilde{\omega}_{A^{\alpha\beta},x'}(t)}{t}\,dt+C\cF(|x-y|)
\end{aligned}
\end{equation}
for any $x,y\in B_1$ with $|x-y|\le 1/4$,
where $C>0$ is a constant depending only on $d$, $\lambda$, $\gamma$, and $\omega_{A^{\alpha\beta},x'}$.
We note that
if $x,y\in B_1$ with $|x-y|>1/4$, then by \eqref{170925_eq1}, we have
\begin{align}
\nonumber
&|\hat{U}(x)-\hat{U}(y)|+|D_{x'}u(x)-D_{x'}u(y)| \\
\nonumber
&\le C \|Du\|_{L^\infty(B_1)}+\|p\|_{L^\infty(B_1)}+\|f_1\|_{L^\infty(B_1)}\\
\label{171219@eq2}
&\le C |x-y|^\gamma \big(\|Du\|_{L^1(B_6)}+\|p\|_{L^1(B_6)}+\|f_1\|_{L^\infty(B_6)}+\|g\|_{L^\infty(B_6)}+\cF(1)\big).
\end{align}
The assertion $(a)$ in Theorem \ref{M1} is proved.
\end{proof}

We now turn to the proof of the assertion (b) in the theorem.

\begin{proof}[Proof Theorem \ref{M1} $(b)$]
In this proof, we set $\gamma=\frac{1+\gamma_0}{2}\in (0,1)$.
Let $\kappa=\kappa(d,\lambda,\gamma)$ be the constant from Lemma \ref{171020@lem1z}, and recall the notation
$$
\tilde{\omega}_{\bullet,x',B_4}(r)=\sum_{i=1}^\infty \kappa^{\gamma i} \big(\omega_{\bullet,x',B_4}(\kappa^{-i} r )[\kappa^{-i}r<1]+\omega_{\bullet,x',B_4}(1)[\kappa^{-i} r \ge 1]\big).
$$
Notice from Lemma \ref{171118@lem1} $(b)$ that for any function $f$ satisfying $[f]_{C^{\gamma_0}_{x'}(B_6)}<\infty$, we have
$$
\tilde{\omega}_{f,x',B_4}(r)+\int_0^r \frac{\tilde{\omega}_{f,x',B_4}(t)}{t}\,dt \lesssim [f]_{C^{\gamma_0}_{x'}(B_6)}r^{\gamma_0}.
$$
Therefore, by \eqref{170925_eq1} together with Remark \ref{180305@rmk1}, we have the following $L^\infty$-estimate for $Du$ and $p$:
$$
\begin{aligned}
&\|Du\|_{L^\infty(B_2)}+\|p\|_{L^\infty(B_2)}  \lesssim_{d,\lambda,\gamma_0,[A^{\alpha\beta}]_{C^{{\gamma_0}}_{x'}(B_6)}} \|Du\|_{L^1(B_6)}+\|p\|_{L^1(B_6)}\\
&\quad +\|f_1\|_{L^\infty(B_6)}+\|g\|_{L^\infty(B_{6})}+[f_\alpha]_{C^{{\gamma_0}}_{x'}(B_6)}+[g]_{C^{\gamma_0}_{x'}(B_6)}.
\end{aligned}
$$
Moreover, by \eqref{171128@A1} and \eqref{171219@eq2}, we have the following $C^{\gamma_0}$-estimate for $\hat{U}$ and $D_{x'}u$:
\begin{align*}
&[\hat{U}]_{C^{\gamma_0}(B_1)}+[D_{x'}u]_{C^{\gamma_0}(B_1)} \lesssim_{d,\lambda,{\gamma_0},[A^{\alpha\beta}]_{C^{{\gamma_0}}_{x'}(B_6)}} \|Du\|_{L^1(B_6)}+\|p\|_{L^1(B_6)}\\
&\quad +\|f_1\|_{L^\infty(B_6)}+\|g\|_{L^\infty(B_6)}+[f_\alpha]_{C^{\gamma_0}_{x'}(B_6)}+[g]_{C^{\gamma_0}_{x'}(B_6)}.
\end{align*}
This completes the proof of the assertion $(b)$ in Theorem \ref{M1} and that of Theorem \ref{M1}.
\end{proof}

\subsection{Proof of Theorem \ref{M0}}		\label{170929@sec1}
We shall derive a priori estimates for $(u,p)$ by assuming $(u,p)\in C^1(\overline{B_4})^d\times C(\overline{B_4})$.
We again set $q=1/2$ and
$$
\Phi(x_0,r):=\inf_{\substack{ \theta\in \bR \\ \Theta\in \bR^{d\times d}}}\bigg(\dashint_{B_r(x_0)}|Du-\Theta|^q+|p-\theta |^q\,dx\bigg)^{1/q}.
$$
For $\gamma\in (0,1)$, $\kappa\in (0,1/2]$, $m\in \{3,4\}$, and $f\in L^1(B_6)$, we denote
\begin{equation}		\label{171217@eq1}
\begin{aligned}
\omega_{f,B_m}(r)&:=\sup_{x\in B_m}\dashint_{B_r(x)}\big|f(y)-(f)_{B_r(x)}\big|\,dy,\\
\tilde{\omega}_{f,B_m}(r)&:=\sum_{i=1}^\infty \kappa^{\gamma i} \big(\omega_{f,B_m}(\kappa^{-i} r )[\kappa^{-i}r<1]+\omega_{f,B_m}(1)[\kappa^{-i} r \ge 1]\big).
\end{aligned}
\end{equation}

\begin{lemma}		\label{171020@lem10z}
Let $\gamma\in (0,1)$.
Under the same hypothesis of Theorem \ref{M0} $(a)$,
there exists a constant $\kappa\in (0,1/2]$ depending only on $d$, $\lambda$, and $\gamma$, such that the following hold.
\begin{enumerate}[$(i)$]
\item
For any $x_0\in B_3$ and $0<r \le 1/4$, we have
$$
\begin{aligned}
&\sum_{j=0}^\infty \Phi(x_0,\kappa^j r) \lesssim_{d,\lambda,\gamma} \Phi(x_0,r)\\
&\quad +\|Du\|_{L^\infty(B_r(x_0))}\int_0^r \frac{\tilde{\omega}_{A^{\alpha\beta},B_4}(t)}{t}\,dt +\int_0^r \frac{\tilde{\omega}_{f_\alpha,B_4}(t)+\tilde{\omega}_{g,B_4}(t)}{t}\,dt.
\end{aligned}
$$
\item
For any $x_0\in B_3$, $0<\rho\le r \le 1/4$, we have
$$
\begin{aligned}
&\Phi(x_0,\rho)\lesssim_{d,\lambda,\gamma}\left(\frac{\rho}{r}\right)^\gamma \Phi(x_0,r)\\
&\quad + \|Du\|_{L^\infty(B_r(x_0))}\tilde{\omega}_{A^{\alpha\beta}, B_3}(\rho)+\tilde{\omega}_{f_\alpha,B_3}(\rho)+\tilde{\omega}_{g,B_3}(\rho).
\end{aligned}
$$
\end{enumerate}
\end{lemma}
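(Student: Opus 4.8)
The plan is to mimic the proof of Lemma \ref{171020@lem1z} almost verbatim, with the single structural change that, since here the coefficients and data have Dini mean oscillation in \emph{all} directions, we may freeze them to genuine constants over the ball $B_r(x_0)$ rather than only over the $x'$-slices. Fix $x_0\in B_3$ and $0<r\le 1/4$. Set $\bar A^{\alpha\beta}=(A^{\alpha\beta})_{B_r(x_0)}$, $\bar f_\alpha=(f_\alpha)_{B_r(x_0)}$, $\bar g=(g)_{B_r(x_0)}$, all constant, and let $\cL_0u=D_\alpha(\bar A^{\alpha\beta}D_\beta u)$. Since $\cL_0(\bar g\,x_1e_1)=0$ and $D_\alpha\bar f_\alpha=0$, the pair $(u_e,p_e):=(u-\bar g\,x_1e_1,\;p)$ solves $\cL_0u_e+\nabla p_e=D_\alpha F_\alpha$ with $F_\alpha=(\bar A^{\alpha\beta}-A^{\alpha\beta})D_\beta u+f_\alpha-\bar f_\alpha$ and $\operatorname{div}u_e=g-\bar g$, a function with zero average on $B_r(x_0)$. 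Handling the divergence with the shear $\bar g\,x_1e_1$ (instead of the $x_1$-dependent profile $\int u_0\,dy_1$ used before) is the only point requiring a moment's care; it leaves $\cL_0$ untouched and costs only a harmless \emph{constant} divergence for the smooth part below.

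Next I would decompose $(u_e,p_e)=(w,p_1)+(v,p_2)$ on $B_{4r}(x_0)\subset B_4$ exactly as in \eqref{171023_D3}, with $(w,p_1)\in W^{1,2}_0(B_{4r}(x_0))^d\times\tilde{L}^2(B_{4r}(x_0))$ solving the Stokes system with data $D_\alpha(I_{B_r(x_0)}F_\alpha)$ and $I_{B_r(x_0)}(g-\bar g)-(I_{B_r(x_0)}(g-\bar g))_{B_{4r}(x_0)}$. Then $(w,p_1)$ is estimated via Lemma \ref{170904@lem1} (with scaling; its hypotheses cover constant coefficients) together with the level-set optimization of \eqref{170913@eq2}, giving $(\dashint_{B_r(x_0)}(|Dw|+|p_1|)^q)^{1/q}\lesssim\dashint_{B_r(x_0)}(|F_\alpha|+|g-\bar g|)\,dx\lesssim\|Du\|_{L^\infty(B_r(x_0))}\omega_{A^{\alpha\beta},B_3}(r)+\omega_{f_\alpha,B_3}(r)+\omega_{g,B_3}(r)$, where the last inequality uses $x_0\in B_3$. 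The remainder $(v,p_2)=(u_e,p_e)-(w,p_1)$ satisfies $\cL_0v+\nabla p_2=0$ in $B_r(x_0)$ with constant divergence, so Lemma \ref{170908@lem1} applies; crucially, because $\cL_0$ now has \emph{constant} coefficients I use estimate \eqref{171110@eq1a}, which controls the mean oscillation of the \emph{entire} gradient $Dv$ and of $p_2$, rather than only of $\bar A^{1\beta}D_\beta v+p_2e_1$ and $D_{x'}v$ as in \eqref{171110@eq1}. This is exactly where the full Dini hypothesis pays off and why $\Phi$ may here be taken to involve all of $Du$ and $p$.

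Combining the two pieces by the $L^q$ triangle inequality and absorbing the shear and the averages $(Dv)_{B_{\kappa r}(x_0)},(p_2)_{B_{\kappa r}(x_0)}$ into the infima defining $\Phi$, I obtain, as in the derivation of \eqref{171023@eq3}, the one-step bound
\[
\Phi(x_0,\kappa r)\le C_0\kappa\,\Phi(x_0,r)+C_0\kappa^{-d/q}\big(\|Du\|_{L^\infty(B_r(x_0))}\omega_{A^{\alpha\beta},B_3}(r)+\omega_{f_\alpha,B_3}(r)+\omega_{g,B_3}(r)\big),
\]
and then fix $\kappa=\kappa(d,\lambda,\gamma)\in(0,1/2]$ with $C_0\kappa^{1-\gamma}\le1$ to replace $C_0\kappa$ by $\kappa^\gamma$. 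Iterating and using $\sum_{i=1}^j\kappa^{\gamma(i-1)}\omega_{\bullet,B_3}(\kappa^{j-i}r)\le\kappa^{-\gamma}\tilde{\omega}_{\bullet,B_3}(\kappa^jr)$ gives the analogue of \eqref{170926@eq1}; summing over $j\ge0$, invoking Lemma \ref{171118@lem1} $(a)$, and using $\omega_{\bullet,B_3}\le\omega_{\bullet,B_4}$ yields $(i)$, while for $(ii)$ one selects the integer $j$ with $\kappa^{j+1}<\rho/r\le\kappa^j$ and reads the bound off the iterated estimate (the case $j=0$ being immediate from volume comparison of $B_\rho(x_0)$ and $B_r(x_0)$). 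I expect no serious obstacle here: every step has a direct counterpart in the proof of Lemma \ref{171020@lem1z}, the only substantive simplifications being the use of genuine constant coefficients and of the stronger interior estimate \eqref{171110@eq1a}.
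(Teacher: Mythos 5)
Your proof is correct and takes essentially the same route as the paper's: freeze coefficients and data to their constants over $B_r(x_0)$, split off a corrector estimated via Lemma \ref{170904@lem1} and the level-set argument of \eqref{170913@eq2}, apply the constant-coefficient oscillation estimate \eqref{171110@eq1a} to the homogeneous part, and iterate after choosing $\kappa$ so that $C_0\kappa^{1-\gamma}\le 1$. The only deviation is cosmetic: you subtract the shear $\bar g\,x_1e_1$ to zero out the average of the divergence before decomposing, whereas the paper keeps $(u,p)$ and lets the constant $\bar g$ ride along in $\operatorname{div} v$, which is permitted since Lemma \ref{170908@lem1} allows any constant $\ell$; both bookkeeping choices yield the same one-step recursion and conclusions.
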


\begin{proof}
The proof is similar to that of Lemma \ref{171020@lem1z}.
Let $x_0\in B_3$ and $0<r \le 1/4$.
For a function $f$, we denote
$\bar{f}=(f)_{B_r(x_0)}$.
Define an elliptic operator $\cL_0$ by
$$
\cL_0 u=D_\alpha(\bar{A}^{\alpha\beta}D_\beta u),
$$
and observe that $(u,p)$ satisfies
$$
\left\{
\begin{aligned}
\cL_0 u+\nabla p=D_\alpha F_\alpha &\quad \text{in }\, B_6,\\
\operatorname{div} u=G+\bar{g} &\quad \text{in }\, B_6,
\end{aligned}
\right.
$$
where $F_\alpha=(\bar{A}^{\alpha\beta}-A^{\alpha\beta})D_\beta u+f_\alpha -\bar{f}_\alpha$ and $G=g-\bar{g}$.
We decompose
$$
(u,p)=(w,p_1)+(v,p_2),
$$
where $(w,p_1)\in W^{1,2}_0(B_{4r}(x_0))^d\times \tilde{L}^2(B_{4r}(x_0))$ is the weak solution of the problem
$$
\left\{
\begin{aligned}
\cL_0 w+\nabla p_1=D_\alpha(I_{B_r(x_0)}F_\alpha) &\quad \text{in }\, B_{4r}(x_0),\\
\operatorname{div} w=I_{B_r(x_0)}G-\big(I_{B_r(x_0)}G\big)_{B_{4r}(x_0)} &\quad \text{in }\, B_{4r}(x_0).
\end{aligned}
\right.
$$
Then similar to \eqref{170913@eq2}, we get
\begin{equation}		\label{171020@eq5d}
\bigg(\dashint_{B_r(x_0)}(|Dw|+|p_1|)^q\,dx\bigg)^{1/q}\lesssim \dashint_{B_r(x_0)} |F_\alpha|+|G|\,dx.
\end{equation}
Observe that $(v,p_2)=(u,p)-(w,p_1)$ satisfies
$$
\left\{
\begin{aligned}
\cL_0 v+\nabla p_2=0 &\quad \text{in }\, B_r(x_0),\\
\operatorname{div} v=\big(I_{B_r(x_0)}G\big)_{B_{4r}(x_0)} + \bar{g} &\quad \text{in }\, B_r(x_0).
\end{aligned}
\right.
$$
Then by \eqref{171110@eq1a}, we obtain for $\kappa\in (0,1/2]$ that
\begin{equation}		\label{171030@eq9}
\begin{aligned}
&\bigg(\dashint_{B_{\kappa r}(x_0)}\big|Dv-(Dv)_{B_{\kappa r}(x_0)}\big|^q+\big|p_2-(p_2)_{B_{\kappa r}(x_0)}\big|^q\,dx\bigg)^{1/q}\\
&\lesssim_{d,\lambda}
\kappa  \inf_{\Theta\in \bR^{d\times d}} \bigg(\dashint_{B_r(x_0)} \big|Dv-\Theta \big|^q\,dx\bigg)^{1/q}.
\end{aligned}
\end{equation}
Observe that
$$
\begin{aligned}
\Phi(x_0,\kappa r)
&\le \bigg(\dashint_{B_{\kappa r}(x_0)}\big|Du-(Dv)_{B_{\kappa r}(x_0)}\big|^q+|p-(p_2)_{B_{\kappa r}(x_0)}\big|^q\,dx\bigg)^{1/q}\\
&\lesssim \bigg(\dashint_{B_{\kappa r}(x_0)}\big|Dv-(Dv)_{B_{\kappa r}(x_0)}\big|^q+|p_2-(p_2)_{B_{\kappa r}(x_0)}\big|^q\,dx\bigg)^{1/q}\\
&\quad +\bigg(\dashint_{B_{\kappa r}(x_0)}|Dw|^q+|p_1|^q\,dx\bigg)^{1/q}.
\end{aligned}
$$
Using this together with \eqref{171020@eq5d} and \eqref{171030@eq9}, we get
$$
\begin{aligned}
\Phi(x_0,\kappa r)&\le C_0
\kappa
\Phi(x_0,r) +C_0\kappa^{-d/q}\|Du\|_{L^\infty(B_r(x_0))}\omega_{A^{\alpha\beta}, B_3}(r)\\
&\quad +C_0\kappa^{-d/q}\big(\omega_{f_\alpha,B_3}(r)+\omega_{g,B_3}(r)\big),
\end{aligned}
$$
where $C_0=C_0(d,\lambda)>0$.
We take $\kappa=\kappa(d,\lambda,\gamma)\in (0,1/2]$ so that
$C_0\kappa^{1-\gamma}\le 1$.
Then we have
$$
\begin{aligned}
\Phi(x_0,\kappa r) &\le \kappa^\gamma \Phi(x_0,r)+C\|Du\|_{L^\infty(B_r(x_0))}\omega_{A^{\alpha\beta}, B_3}(r)\\
&\quad +C\big(\omega_{f_\alpha,B_3}(r)+\omega_{g,B_3}(r)\big),
\end{aligned}
$$
where $C=C(d,\lambda,\gamma)$,
which corresponds to \eqref{171023@eq3}.
The rest of the proof is identical to that of Lemma \ref{171020@lem1z} and omitted.
\end{proof}

Now we are ready to prove the theorem.

\begin{proof}[Proof of Theorem \ref{M0}]
The proof is an adaptation of that of Theorem \ref{M1}.
To show the assertion $(a)$ in the theorem, we fix $\gamma\in (0,1)$, and let $\kappa=\kappa(d,\lambda,\gamma)$ be the constant from Lemma \ref{171020@lem10z}.
Denote
$$
\tilde{\omega}_{\bullet}=\tilde{\omega}_{\bullet, B_4}, \quad \cF(r)=\int_0^r \frac{\tilde{\omega}_{f_\alpha}(t)+\tilde{\omega}_{g}(t)}{t}\,dt.
$$
Similar to \eqref{171120@eq2}, we have for $x_0\in B_3$ and $0<r\le 1/4$ that
$$
|Du(x_0)-\Theta_{x_0, r}|+|p(x_0)-\theta_{x_0,r}|\lesssim \sum_{j=0}^\infty \Phi(x_0, \kappa^j r),
$$
where $\Theta_{x_0,r}\in \bR^{d\times d}$ and $\theta_{x_0,r}\in \bR$ satisfy
$$
\Phi(x_0,r)=\bigg(\dashint_{B_r(x_0)}|Du-\Theta_{x_0,r}|^q+|p-\theta_{x_0,r} |^q\,dx\bigg)^{1/q}.
$$
From this together with Lemma \ref{171020@lem10z} $(i)$, it follows that
$$
\begin{aligned}
&\big|Du(x_0)-\Theta_{x_0,r}\big|+\big|p(x_0)-\theta_{x_0,r}\big|\\
&\lesssim_{d,\lambda,\gamma} \Phi(x_0,r)+\|Du\|_{L^\infty(B_r(x_0))}\int_0^r \frac{\tilde{\omega}_{A^{\alpha\beta}}(t)}{t}\,dt+\cF(r).
\end{aligned}
$$
This implies that
$$
\begin{aligned}
|Du(x_0)|+|p(x_0)|&\lesssim r^{-d}\big(\|Du\|_{L^1(B_r(x_0))}+\|p\|_{L^1(B_r(x_0))}\big) \\
&\quad +\|Du\|_{L^\infty(B_r(x_0))}\int_0^r \frac{\tilde{\omega}_{A^{\alpha\beta}}(t)}{t}\,dt+\cF(r),
\end{aligned}
$$
which corresponds to \eqref{171024@eq5}.
Using this and following the same arguments in the proof of \eqref{170925_eq1}, we have the following $L^\infty$-estimate for $Du$ and $p$:
\begin{equation}		\label{171024@eq5b}
\|Du\|_{L^\infty(B_2)}+\|p\|_{L^\infty(B_2)} \lesssim_{d,\lambda,\gamma,\omega_{A^{\alpha\beta}}} \|Du\|_{L^1(B_6)}+\|p\|_{L^1(B_6)}+\cF(1).
\end{equation}
Similar to \eqref{171120@A2}, by Lemma \ref{171020@lem10z}, we obtain for $x_0\in B_1$ and $0<\rho \le 1/4$ that
$$
\begin{aligned}
\sum_{j=0}^\infty\Phi(x_0, \kappa^j \rho) &\lesssim_{d,\lambda.\gamma} \rho^{\gamma} \big(\|Du\|_{L^1(B_6)}+\|p\|_{L^1(B_6)}\big)\\
&\quad
+\|Du\|_{L^\infty(B_2)}\int_0^\rho\frac{\tilde{\omega}_{A^{\alpha\beta}}(t)}{t}\,dt
+\cF(\rho).
\end{aligned}
$$
Using this and \eqref{171024@eq5b}, and following the proof of \eqref{171128@A1}, we have  for any $x,y\in {B_1}$ with $|x-y|\le 1/4$ that
\begin{equation}		\label{171121@eq3}
\begin{aligned}
&|Du(x)-Du(y)|+|p(x)-p(y)|\\
&\lesssim_{d,\lambda,\gamma,\omega_{A^{\alpha\beta}}}  \big(\|Du\|_{L^1(B_6)}+\|p\|_{L^1(B_6)}\big)\left(|x-y|^\gamma+\int_0^{|x-y|} \frac{\tilde{\omega}_{A^{\alpha\beta}}(t)}{t}\,dt\right)\\
&\quad +\cF(1) \int_0^{|x-y|} \frac{\tilde{\omega}_{A^{\alpha\beta}}(t)}{t}\,dt+\cF(|x-y|).
\end{aligned}
\end{equation}
This completes the proof of the assertion $(a)$.

To prove the assertion $(b)$ in the theorem, we set $\gamma=\frac{1+\gamma_0}{2}$.
Let $\kappa=\kappa(d,\lambda,\gamma)$ be the constant from Lemma \ref{171020@lem10z}, and recall the notation
$$
\tilde{\omega}_{\bullet,B_4}(r)=\sum_{i=1}^\infty \kappa^{\gamma i} \big(\omega_{\bullet,B_4}(\kappa^{-i} r )[\kappa^{-i}r<1]+\omega_{\bullet,B_4}(1)[\kappa^{-i} r \ge 1]\big).
$$
Since it holds that (see Lemma \ref{171118@lem1})
$$
\tilde{\omega}_{f,B_4}(r)+\int_0^r \frac{\tilde{\omega}_{f,B_4}(t)}{t}\,dt \lesssim [f]_{C^{\gamma_0}(B_6)}r^{\gamma_0},
$$
by \eqref{171024@eq5b} and \eqref{171121@eq3}, we get
$$
\begin{aligned}
\|Du\|_{C^{\gamma_0}(B_1)}+\|p\|_{C^{\gamma_0}(B_1)} & \lesssim_{d,\lambda,\gamma_0,[A^{\alpha\beta}]_{C^{\gamma_0}(B_6)}} \|Du\|_{L^1(B_6)}+\|p\|_{L^1(B_6)}\\
&\quad +[f_\alpha]_{C^{\gamma_0}(B_6)}+[g]_{C^{\gamma_0}(B_6)}.
\end{aligned}
$$
The theorem is proved.
\end{proof}

\subsection{Proof of Theorem \ref{M3}}	\label{171001@sec1}
We use the idea in Brezis \cite{MR2465684} (see also \cite[Appendix]{MR2548032}).
The author proved the $W^{1,q_0}$-regularity for $W^{1,1}$-weak solutions to divergence form elliptic equations with Dini continuous coefficients by using duality and bootstrap arguments combined with regularity theories.
In this proof, we utilize our regularity result in Theorem \ref{M1} together with the $W^{1,q}$-solvability result in \cite[Theorem 2.4]{MR3758532}.

Let $\eta$ be a smooth function on $\bR^d$ satisfying
$$
0\le \eta\le 1, \quad \eta \equiv 1 \, \text{ on }\, B_{5/2}, \quad \operatorname{supp} \eta\subset B_{3}, \quad |\nabla \eta|\lesssim_d 1.
$$
Set $\tilde{\cL}$ be an operator of the form
$$
\tilde{\cL} u=D_\alpha (\tilde{A}^{\alpha\beta}D_\beta u),
$$
where $\tilde{A}^{\alpha\beta}_{ij}=\eta A^{\alpha\beta}_{ij}+\lambda(1-\eta)\delta_{\alpha\beta}\delta_{ij}$.
Here, $\lambda$ is the constant from \eqref{ell} and $\delta_{ij}$ is the Kronecker delta symbol.
Obviously, the coefficients $\tilde{A}^{\alpha\beta}$ satisfy the strong ellipticity condition \eqref{ell}.
Moreover, since $A^{\alpha\beta}$ are of partially Dini mean oscillation satisfying Definition \ref{D1} $(ii)$, $\tilde{A}^{\alpha\beta}$ are also of partially Dini mean oscillation with
\begin{align*}	
\omega_{\tilde{A}^{\alpha\beta},x'}(r):&=\sup_{x\in B_4} \dashint_{B_r(x)}\bigg|\tilde{A}^{\alpha\beta}(y)-\dashint_{B_r'(x')}\tilde{A}^{\alpha\beta}(y_1,z')\,dz'\bigg|\,dy\\
&\lesssim_{d,\lambda}  \omega_{A^{\alpha\beta},x'}(r)+r.
\end{align*}
Observe that (using Lemma \ref{171118@lem1} $(c)$)
\begin{enumerate}[$(a)$]
\item
$\Omega=B_6$ satisfies \cite[Assumption 2.1]{MR3758532} with for some $R_0\in (0,1)$.
\item
For any $\rho>0$, there exists $R_1\in (0,R_0]$, depending only on $d$, $\lambda$, $\omega_{A^{\alpha\beta},x'}$, and $\rho$, such that $\tilde{A}^{\alpha\beta}$ and $\Omega=B_6$ satisfy \cite[Assumption 2.2 ($\rho$)]{MR3758532}.
\end{enumerate}
Therefore, the $W^{1,q}$-solvability in \cite[Theorem 2.4]{MR3758532} is available for $\tilde{\cL}$ and $\tilde{\cL}^*$ on $\Omega=B_6$, where $\tilde{\cL}^*$ is the adjoint operator of $\tilde{\cL}$, i.e.,
$$
\tilde{\cL}^*v=D_\alpha(\tilde{A}^{\alpha\beta}_* D_\beta v), \quad \tilde{A}^{\alpha\beta}_*=(\tilde{A}^{\beta\alpha})^{\top}.
$$

Now let $(u,p)\in W^{1,1}(B_6)^d\times L^1(B_6)$ be a weak solution of
$$
\left\{
\begin{aligned}
\cL u+\nabla p=D_\alpha f_\alpha \quad \text{in }\, B_6,\\
\operatorname{div}u=g \quad \text{in }\, B_6,
\end{aligned}
\right.
$$
where $f_\alpha\in L^{q_0}(B_6)^d$ and $g\in L^{q_0}(B_6)$ for some $ q_0\in (1,\infty)$.
Then for any $\phi\in C^\infty_0(B_6)^d$, we have that
\begin{equation}		\label{171004@eq7}
\begin{aligned}
&\int_{B_6} \tilde{A}^{\alpha\beta}D_\beta u \cdot D_\alpha \phi\,dx= \lambda\int_{B_6}(1-\eta)D_\alpha u\cdot D_\alpha \phi\,dx\\
&\quad-\int_{B_6} A^{\alpha\beta}D_\beta u\cdot D_\alpha \eta \phi\,dx+\int_{B_6} f_\alpha \cdot D_\alpha(\eta \phi)\,dx-\int_{B_6}p \operatorname{div}(\eta \phi)\,dx.
\end{aligned}
\end{equation}
We consider the following two cases:
$$
1<q_0<\frac{d}{d-1}, \quad  \frac{d}{d-1}\le q_0<\infty.
$$
\begin{enumerate}[i.]
\item
$1<q_0<\frac{d}{d-1}$:
Set $q'_0=\frac{q_0}{q_0-1}>d$.
For $\phi_\alpha\in C^\infty_0(B_2)^d$ and $\psi\in C^\infty_0(B_2)$,
by the $W^{1,q}$-solvability in \cite[Theorem 2.4]{MR3758532},
there exists a unique $(v,\pi)\in W^{1,q_0'}_0(B_6)^d\times \tilde{L}^{q_0'}(B_6)$ satisfying
\begin{equation}		\label{171002@eq1}
\left\{
\begin{aligned}
\tilde{\cL}^* v+\nabla \pi=D_\alpha \phi_\alpha &\quad \text{in } \, B_6,\\
\operatorname{div} v=\psi-(\psi)_{B_6} &\quad \text{in }\, B_6
\end{aligned}
\right.
\end{equation}
and
\begin{equation}		\label{171122@eq1}
\|Dv\|_{L^{q_0'}(B_6)}+\|\pi\|_{L^{q_0'}(B_6)}\le C\big(\|\phi_\alpha\|_{L^{q_0'}(B_2)}+\|\psi\|_{L^{q_0'}(B_2)}\big),
\end{equation}
where $C=C(d,\lambda,\omega_{A^{\alpha\beta},x'}, q_0)$.
Since $\tilde{A}^{\alpha\beta}_*$ are of partially Dini mean oscillation, by Theorem \ref{M1} $(a)$ with scaling and covering argument, we see that $Dv$ and $\pi$ are bounded in $B_3$.
Hence, as a test function to \eqref{171002@eq1}, we can use
$\zeta u\in W^{1,1}_0(B_{5/2})^d$, where $\zeta$ is a smooth function satisfying
$$
0\le \zeta\le 1, \quad \zeta\equiv 1 \, \text{ on }\, B_2, \quad \operatorname{supp}\zeta \subset B_{5/2}, \quad |\nabla \zeta|\lesssim_d1.
$$
By testing \eqref{171002@eq1} with $\zeta u$ and setting $\phi=\zeta v$ in \eqref{171004@eq7},
we have
$$
\begin{aligned}
&\int_{B_6}D_\alpha (\zeta u) \cdot \phi_\alpha+(\zeta p) \psi \,dx=\int_{B_{5/2}} \tilde{A}^{\alpha\beta}\big(D_\beta \zeta u\cdot D_\alpha v-D_\beta u\cdot D_\alpha \zeta v\big)\,dx\\
&\quad +\int_{B_{5/2}} \pi \operatorname{div}(\zeta u)\,dx
+\int_{B_{5/2}} f_\alpha\cdot D_\alpha(\zeta v)\,dx
+\int_{B_{5/2}}p \big(\zeta (\psi)_{B_6}-\nabla \zeta\cdot v\big) \,dx.
\end{aligned}
$$
Since $\phi_\alpha$ and $\psi$ are supported in $B_2$, the left-hand side of the above inequality is equal to
$$
\int_{B_2}D_\alpha u \cdot \phi_\alpha+p \psi \,dx.
$$
Hence, by using H\"older's inequality, the Sobolev inequality, and \eqref{171122@eq1}, we obtain that
$$
\bigg|\int_{B_2}D_\alpha u \cdot \phi_\alpha+ p\psi\,dx\bigg|\le CM\big(\|\phi_\alpha\|_{L^{q_0'}(B_2)}+\|\psi\|_{L^{q_0'}(B_2)}\big),
$$
where we set
$$
M:=\|u\|_{W^{1,1}(B_{5/2})}+\|p\|_{L^1(B_{5/2})}+\|f_\alpha\|_{L^{q_0}(B_{5/2})}+\|g\|_{L^{q_0}(B_{5/2})}.
$$
Therefore, by duality and the Sobolev inequality, we get
\begin{equation}		\label{171002@eq6}
\|u\|_{W^{1,q_0}(B_2)}+\|p\|_{L^{q_0}(B_2)}\lesssim M.
\end{equation}

\item
$\frac{d}{d-1}\le q_0<\infty$:
Following the same argument used in deriving \eqref{171002@eq6}, we see that for $1\le r<R\le 2$ and $\frac{d}{d-1}\le q<\infty$, we have
\begin{equation}		\label{171002@eq5}
\begin{aligned}
&\|u\|_{W^{1,q}(B_r)}+\|p\|_{L^q(B_r)}\\
&\le C\big( \|u\|_{W^{1,q^*}(B_R)}+\|p\|_{L^{q^*}(B_R)}+\|f_\alpha\|_{L^{q}(B_R)}+\|g\|_{L^q(B_R)}\big),
\end{aligned}
\end{equation}
where $C=C(d,\lambda,\omega_{A^{\alpha\beta}},q,r,R)$ provided that $(u,p)\in W^{1,q^*}(B_R)^d\times L^{q^*}(B_R)$.
Here, $q^*$ is any number in $(1,q)$ if $q=\frac{d}{d-1}$ and $q^*=\frac{dq}{d+q}$ if $q>\frac{d}{d-1}$.

Let $k$ be the smallest positive integer such that
$$
k>\frac{dq_0-q_0-d}{q_0}.
$$
We set
$$
q_i=\frac{q_0d}{d+q_0 i}, \quad r_i=1+\frac{i}{k}, \quad i\in \{0,1,\ldots,k\}.
$$
By applying \eqref{171002@eq5} iteratively, we have
$$
\begin{aligned}
&\|u\|_{W^{1,q_0}(B_1)}+\|p\|_{L^{q_0}(B_1)} \\
&\lesssim \|u\|_{W^{1,q_k}(B_2)}+\|p\|_{L^{q_k}(B_2)}+\|f_\alpha\|_{L^{q_0}(B_2)}+\|g\|_{L^{q_0}(B_2)}.
\end{aligned}
$$
Note that $q_k<\frac{d}{d-1}$.
Therefore, using \eqref{171002@eq6} with $q_0=q_k$, we get the desired estimate.
\end{enumerate}
The theorem is proved.
\qed

\subsection{Proof of Theorem \ref{M2}}	\label{171003@sec1}
We modify the proof of Lemma \ref{170904@lem1}.
Let $F$ be a bounded linear operator on $L^1(B_6)^{d\times d}\times L^1(B_6)$ and also on $L^2(B_6)^{d\times d}\times L^2(B_6)$, and $T$ be a bounded linear operator on $L^2(B_6)^{d\times d}\times L^2(B_6)$, respectively, given as in the proof of Lemma \ref{170904@lem1}.
Then we have
$$
T_0=T\circ F,
$$
where $T_0$ is the operator mentioned in the statement of the theorem.

It suffices to show that $T$ satisfies the hypothesis of Lemma \ref{170823@lem1} with $R_0=6$, $\mu=1/3$, and $c=6$.
Fix $x_0\in B_1$ and $0<r<1/3$.
Let $\tilde{f}_\alpha\in \tilde{L}^2(B_6)^d$ and $\tilde{g}\in \tilde{L}^2(B_6)$ be supported in $B_r(x_0)$.
Assume that $(u,p)\in W^{1,2}_0(B_6)^d\times \tilde{L}^2(B_6)$ is the weak solution of \eqref{171122@eq2} with $(f_1,\ldots,f_d,g)=F^{-1}(\tilde{f}_1,\ldots,\tilde{f}_d,\tilde{g})$.
Let $R\in [6r,2)$ so that $B_1\setminus B_R(x_0)\neq \emptyset$, and let $\cL^*$ be the adjoint operator of $\cL$, i.e.,
$$
\cL^*v=D_\alpha(A^{\alpha\beta}_* D_\beta v), \quad A^{\alpha\beta}_*=(A^{\beta\alpha})^{\top}.
$$
Then by \cite[Lemma 3.2]{MR3693868}, for given
$$
\phi_\alpha\in C^\infty_0((B_{2R}(x_0)\setminus B_{R}(x_0))\cap B_1)^d \quad \text{and} \quad \psi \in C^\infty_0((B_{2R}(x_0)\setminus B_{R}(x_0))\cap B_1),
$$
there exists a unique $(v,\pi)\in W^{1,2}_0(B_6)^d\times \tilde{L}^2(B_6)$ satisfying
$$
\left\{
\begin{aligned}
\cL^* v+\nabla \pi = D_\alpha \phi_\alpha \quad \text{in }\, B_6,\\
\operatorname{div} v=\psi-(\psi)_{B_6} \quad \text{in }\, B_6,
\end{aligned}
\right.
$$
and
\begin{equation}		\label{171127@A1}
\||Dv|+|\pi|\|_{L^2(B_6)}\lesssim_{d,\lambda} \||\phi_\alpha|+|\psi|\|_{L^2((B_{2R}(x_0)\setminus B_R(x_0))\cap B_1)}.
\end{equation}
Set $V:=A^{1\beta}_* D_\beta v+\pi e_1$, and observe that $(v,\pi)$ satisfies
$$
\left\{
\begin{aligned}
\cL^* v+\nabla \pi = 0 \quad \text{in }\, B_{R}(x_0),\\
\operatorname{div} v=-(\psi)_{B_6} \quad \text{in }\, B_{R}(x_0).
\end{aligned}
\right.
$$
By a similar argument that led to \eqref{171128@A1} and \eqref{171219@eq2},
we obtain that
\begin{equation}		\label{171109@eq1}
\begin{aligned}
&|V(x)-V(y)|+|D_{x'}v(x)-D_{x'}v(y)| \\
&\lesssim R^{-d/2} \||Dv|+|\pi|\|_{L^2(B_{R}(x_0))} \left(\bigg(\frac{|x-y|}{R}\bigg)^{\gamma}+\int_0^{|x-y|}\frac{\tilde{\omega}_{A^{\alpha\beta},x'}(t)}{t}\,dt\right)\\
&\quad +|(\psi)_{B_6}|\left(\left(\frac{|x-y|}{R}\right)^\gamma+\int_0^{|x-y|}\frac{\tilde{\omega}_{A^{\alpha\beta},x'}(t)}{t}\,dt\right)
\end{aligned}
\end{equation}
for any $x,y\in B_{r}(x_0)\subset B_{R/6}(x_0)$.
Note that (see \cite[Lemma 3.4]{MR3620893})
$$
\int_0^{2r}\frac{\tilde{\omega}_{A^{\alpha\beta},x'}(t)}{t}\,dt \lesssim_{d,\lambda,C_0} \left(\ln \frac{1}{r}\right)^{-1}.
$$
Therefore, by \eqref{171109@eq1} and the fact that
$$
D_1v^1=-\sum_{i=2}^d D_i v^i-(\psi)_{B_6}, \quad
$$
we have
\begin{equation}		\label{171122@A3}
\begin{aligned}
&\big\|V-(V)_{B_r(x_0)}\big\|_{L^\infty(B_r(x_0))}+\big\|D_{x'}v-(D_{x'}v)_{B_r(x_0)}\big\|_{L^\infty(B_r(x_0))}\\
&+\big\|D_1v^1-(D_1v^1)_{B_r(x_0)}\big\|_{L^\infty(B_r(x_0))} \lesssim |(\psi)_{B_6}|\left(\left(\frac{r}{R}\right)^\gamma+\left(\ln \frac{1}{r}\right)^{-1}\right)\\
&\quad + R^{-d/2}\||Dv|+|\pi|\|_{L^2(B_{R}(x_0))} \left(\left(\frac{r}{R}\right)^{\gamma}+\left(\ln \frac{1}{r}\right)^{-1}\right).
\end{aligned}
\end{equation}
Using \eqref{171127@A1} and \eqref{171122@A3}, and following the argument used in deriving \eqref{171108@eq1}, we get
$$
\begin{aligned}
&\left|\int_{(B_{2R}(x_0)\setminus B_{R}(x_0))\cap B_1} (D_\alpha u \cdot \phi_\alpha +p \psi ) \,dx\right|\\
&\lesssim \left(\left(\frac{r}{R}\right)^{\gamma}+\left(\ln \frac{1}{r}\right)^{-1}\right)R^{-d/2}M \||\phi_\alpha|+|\psi|\|_{L^2((B_{2R}(x_0)\setminus B_{R}(x_0))\cap B_1)},
\end{aligned}
$$
where
$$
M=\int_{B_r(x_0)} (|\tilde{f}_\alpha|+|\tilde{g}|\big)\,dx.
$$
Thus by duality and H\"older's inequality, we have
$$
\int_{B_{2R}(x_0)\setminus B_{R}(x_0))\cap B_1} (|Du|+|p|)\,dx
\lesssim \left(\left(\frac{r}{R}\right)^{\gamma}+\left(\ln \frac{1}{r}\right)^{-1}\right)M.
$$
Let $N$ be the smallest positive integer such that $B_1\subset B_{2^N \cdot 3 \cdot r}(x_0)$.
By taking $R=2^i\cdot 3\cdot r$, $i\in \{1,2,\ldots,N-1\}$, and using the fact that $N-1 \lesssim \ln(1/r)$,  we obtain by the above inequality that
$$
\begin{aligned}
\int_{B_1\setminus B_{6r}(x_0)} (|Du|+|p|)\,dx
\lesssim \sum_{k=1}^{N-1} \big(2^{- k\gamma}+(\ln(1/r))^{-1}\big)M\lesssim M,
\end{aligned}
$$
which implies that the map $T$ satisfies the hypothesis of Lemma \ref{170823@lem1}.
The theorem is proved.
\qed

\part{Boundary estimates}			\label{Part2}	
This part of the paper is devoted to the boundary estimates on a half ball.

\section{Main results} 		\label{Sec5}
We denote $B_r^+(x)=B_r(x)\cap \bR^d_+$, where
$$
\bR^d_+=\{x=(x_1,x')\in \bR^d:x_1>0, x'\in \bR^{d-1}\}.
$$
We use the abbreviation $B_r^+=B_r^+(0)$.

\begin{definition}		\label{D3}
Let $f\in L^1(B_6^+)$.
\begin{enumerate}[$(i)$]
\item
We say that $f$ is of {\em{Dini mean oscillation in $B_4^+$}} if the function $\omega_f:(0,1]\to [0,\infty)$ defined by
$$
\omega_f(r):=\sup_{x\in B_4^+} \dashint_{B_r^+(x)}\bigg| f(y)-\dashint_{B_r^+(x)}f(z)\,dz\bigg|\,dy
$$
satisfies
$$
\int_0^1 \frac{\omega_f(r)}{r}\,dr<\infty.
$$
\item
We say that $f$ is of {\em{partially Dini mean oscillation with respect to $x'$ in $B_4^+$}} if the function $\omega_{f,x'} : (0,1]\to[0,\infty)$ defined by
$$
\omega_{f,x'}(r):=\sup_{x\in B_4^+} \dashint_{B_r^+(x)}\bigg|f(y)-\dashint_{B_r'(x')} f(y_1,z')\,dz'\bigg|\,dy
$$
satisfies
$$
\int_0^1 \frac{\omega_{f,x'}(r)}{r}\,dr<\infty.
$$
\end{enumerate}
\end{definition}

Now we state the main results of the second part of the paper.

\begin{theorem}		\label{MT1}
Let $q_0\in (1,\infty)$.
Assume that $(u,p)\in W^{1,q_0}(B_6^+)^d\times L^{q_0}(B_6^+)$ is a weak solution of
\begin{equation}		\label{171109@eq2}
\left\{
\begin{aligned}
\cL u+\nabla p=D_\alpha f_\alpha \quad &\text{in }\, B_6^+,\\
\operatorname{div} u=g \quad &\text{in }\, B_6^+,\\
u=0 \quad &\text{on }\, B_6\cap \partial \bR^d_+,
\end{aligned}
\right.
\end{equation}
where $f_1\in L^\infty(B_6^+)^d$, $f_\alpha\in L^{q_0}(B_6^+)^d$, $\alpha\in \{2,\ldots,d\}$, and $g\in L^\infty(B_6^+)$.
Set
$$
\hat{U}:=A^{1\beta}D_\beta u+p e_1-f_1.
$$
\begin{enumerate}[$(a)$]
\item
If $A^{\alpha\beta}$, $f_\alpha$, and $g$ are of partially Dini mean oscillation with respect to $x'$ in $B_4^+$, then we have
\begin{equation}		\label{171109@eq2a}
(u,p)\in W^{1,\infty}(B_1^+)^d\times L^\infty(B_1^+)
\end{equation}
and
$$
\hat{U}, D_\alpha u\in C(\overline{B_1^+})^d, \quad \alpha\in \{2,\ldots,d\}.
$$
\item
If it holds that $[A^{\alpha\beta}]_{C^{\gamma_0}_{x'}(B_6^+)}
+[f_\alpha]_{C^{\gamma_0}_{x'}(B_6^+)}
+[g]_{C^{\gamma_0}_{x'}(B_6^+)}<\infty$ for some  $\gamma_0\in (0,1)$,
then we have \eqref{171109@eq2a} and
$$
\hat{U}, D_\alpha u\in C^{\gamma_0}(\overline{B_1^+})^d, \quad \alpha\in \{2,\ldots,d\}.
$$
\end{enumerate}
\end{theorem}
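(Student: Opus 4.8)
\subsection*{Proof of Theorem \ref{MT1}: outline}

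The plan is to run the scheme of the proof of Theorem \ref{M1} with the modifications required by the flat piece of boundary, following the strategy of \cite{MR3747493}. As in the interior case, I would first reduce to a priori estimates under the assumption that $A^{\alpha\beta}$, $f_\alpha$, $g$ are smooth, so that $(u,p)\in C^1(\overline{B_4^+})^d\times C(\overline{B_4^+})$; the general statement then follows from a routine approximation together with the $W^{1,q_0}_0$-solvability of the Dirichlet problem for the Stokes system on a half ball. Throughout one works with $q=1/2$ and the combinations $\hat U=A^{1\beta}D_\beta u+pe_1-f_1$ and $D_{x'}u$. The structural facts that survive at the boundary are: since $u=0$ on $B_6\cap\partial\bR^d_+$, the tangential derivatives $D_{x'}u$ also vanish there, so $(D_{x'}u,D_{x'}p)$ solves a homogeneous Stokes system with the coefficients frozen in $x'$ and zero Dirichlet data on the flat part; and, using the equation, $D_1U$ with $U=\bar A^{1\beta}D_\beta u+pe_1$ is a linear combination of tangential second derivatives of $u$ and tangential first derivatives of $p$, hence inherits estimates from $D_{x'}u$ and $D_{x'}p$.

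First I would establish the half-ball analogues of the preliminary lemmas of Section \ref{Sec3}. These are: a boundary Lipschitz estimate (the analogue of Lemma \ref{170901@lem1}) stating that if $(u,p)$ solves $\cL_0u+\nabla p=0$, $\operatorname{div}u=\ell$ in $B_R^+$ with $u=0$ on $B_R\cap\partial\bR^d_+$, then $\|Du\|_{L^\infty(B_r^+)}\lesssim (R-r)^{-d/2}\|Du\|_{L^2(B_R^+)}$ and $[U]_{C^{0,1}(B_r^+)}+[D_{x'}u]_{C^{0,1}(B_r^+)}\lesssim (R-r)^{-d/2-1}\|Du\|_{L^2(B_R^+)}$, proved by differentiating tangentially (which preserves the Dirichlet condition), invoking known boundary $W^{1,2}$ and $L^\infty$ estimates for the half-ball problem from \cite{arXiv:1604.02690v2,MR3758532}, and handling $D_1U$ through the equation; the resulting $L^q$-mean-oscillation decay for $q\in(0,1)$ (the analogue of Lemma \ref{170908@lem1}), where the affine subtraction is only in the tangential variables plus a constant; and a boundary weak type-$(1,1)$ estimate (the analogue of Lemma \ref{170904@lem1}), obtained by the same duality argument against the adjoint Stokes system on half balls, using the boundary $C^{0,1}$ estimate just mentioned for the adjoint homogeneous problem together with the abstract Lemma \ref{170823@lem1}.

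Next I would prove the half-ball counterpart of Lemma \ref{171020@lem1z}. Fix $x_0=(x_{01},x_0')\in B_3^+$ and $0<r\le 1/4$. If $x_{01}\ge r$ then $B_r(x_0)\subset\bR^d_+$ and the interior argument of Section \ref{171002@sec2} applies verbatim; if $x_{01}<r$ one centers balls at the boundary projection $\bar x_0=(0,x_0')$, freezes the coefficients by averaging in $x'$ over $B_r'(x_0')$ to produce $\cL_0$ with coefficients depending only on $x_1$, subtracts a suitable function of $x_1$ to pass to $(u_e,p_e)$ which solves the half-ball problem with data $F_\alpha=(\bar A^{\alpha\beta}-A^{\alpha\beta})D_\beta u+f_\alpha-\bar f_\alpha$, $G=g-\bar g$ and zero Dirichlet condition on the flat part, and then decomposes $(u_e,p_e)=(w,p_1)+(v,p_2)$ with $(w,p_1)$ the zero-Dirichlet solution on $B_{4r}^+(\bar x_0)$ carrying the localized data and $(v,p_2)$ the homogeneous part on $B_r^+(\bar x_0)$. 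The boundary weak type estimate controls $\big(\dashint_{B_r^+(\bar x_0)}(|Dw|+|p_1|)^q\,dx\big)^{1/q}$ by $\dashint_{B_r^+(\bar x_0)}|F_\alpha|+|G|\,dx$, and the boundary mean-oscillation decay controls the tangential-plus-constant oscillation of $(V,D_{x'}v)$; combining and taking $\kappa=\kappa(d,\lambda,\gamma)$ small yields the iteration inequality $\Phi(x_0,\kappa r)\le\kappa^\gamma\Phi(x_0,r)+C\|Du\|_{L^\infty(B_r^+(x_0))}\omega_{A^{\alpha\beta},x',B_3^+}(r)+C\big(\omega_{f_\alpha,x',B_3^+}(r)+\omega_{g,x',B_3^+}(r)\big)$, where $\Phi$ is the half-ball version of the quantity used in Section \ref{171002@sec2}. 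Iterating, summing over $j$, and invoking the Dini condition through the half-ball analogue of Lemma \ref{171118@lem1} gives a bound on $\sum_j\Phi(x_0,\kappa^j r)$ and a power-type bound $\Phi(x_0,\rho)\lesssim (\rho/r)^\gamma\Phi(x_0,r)+\cdots$.

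The endgame then mirrors Section \ref{171002@sec2}: from $\sum_j\Phi(x_0,\kappa^j r)$ one extracts pointwise bounds on $\hat U(x_0)$ and $D_{x'}u(x_0)$; using the nondegeneracy of $[A^{11}_{ij}]_{i,j=2}^d$ and $D_1u^1=g-\sum_{i=2}^d D_iu^i$ one gets $|Du|\lesssim|\hat U|+|D_{x'}u|+|f_1|+|g|$, absorbs the $\omega_{A^{\alpha\beta},x'}$ contribution by shrinking the radius, and runs the filling-hole summation over shrinking half balls $B_{r_k}^+$ to obtain $\|Du\|_{L^\infty(B_2^+)}+\|p\|_{L^\infty(B_2^+)}\lesssim\|Du\|_{L^1(B_6^+)}+\|p\|_{L^1(B_6^+)}+\|f_1\|_{L^\infty(B_6^+)}+\|g\|_{L^\infty(B_6^+)}+\cF(1)$; the power-type estimate for $\Phi$ together with the telescoping of the minimizers $\theta_{x_0,\rho}$, $\Theta_{x_0,\rho}$ then gives the modulus-of-continuity estimate for $\hat U$ and $D_{x'}u$ on $B_1^+$, which upgrades to a $C^{\gamma_0}$ bound under the partial H\"older hypothesis via the analogue of Lemma \ref{171118@lem1}$(b)$, proving $(a)$ and $(b)$. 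I expect the main obstacle to be the boundary $C^{0,1}$ estimates up to the flat part for $U$ and $D_{x'}v$ of the (adjoint) homogeneous problem with $x_1$-dependent coefficients, and the boundary weak type-$(1,1)$ estimate built on them; once these are in place, everything downstream is a bookkeeping adaptation of Part \ref{Part1}.
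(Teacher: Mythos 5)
Your overall strategy matches the paper's: reduce to a priori estimates, establish half-ball analogues of the Lipschitz estimate, the $L^q$-mean-oscillation decay, and the weak type-$(1,1)$ estimate, then run a Campanato-type iteration for the quantity built from $\hat U$ and $D_{x'}u$, split between interior and boundary regimes, and finish with the filling-hole summation and telescoping of minimizers. That part is faithful.

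There is, however, a genuine gap in the iteration step for interior points near the boundary. You split into the two cases $x_{01}\ge r$ and $x_{01}<r$ and then claim the same clean iteration inequality $\Phi(x_0,\kappa r)\le\kappa^\gamma\Phi(x_0,r)+C\|Du\|_{L^\infty}\omega_{A^{\alpha\beta},x'}(r)+\cdots$ in both cases. When $x_{01}<r$, centering the decomposition at $\bar x_0=(0,x_0')$ and applying the boundary $L^q$-decay lemma controls a \emph{boundary-centered} quantity $\Psi(\bar x_0,\cdot)$ (in which one subtracts only a constant from $\hat U$ and nothing from $D_{x'}u$, since $D_{x'}u$ vanishes on the flat boundary) — not $\Phi(x_0,\cdot)$ at the original center. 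Moreover, when $\kappa r\lesssim x_{01}\lesssim r$, the ball $B_{\kappa r}(x_0)$ sits in the interior while $B_r^+(x_0)$ is cut by the boundary, so neither a pure interior iteration nor a pure boundary-centered one gives $\Phi(x_0,\kappa r)$ directly from $\Phi(x_0,r)$. The paper handles this by a three-case analysis ($r\le x_{01}$, $x_{01}\le 4\rho$, $4\rho\le x_{01}\le r$) in which the intermediate case chains an interior estimate on $B_{x_{01}/4}(x_0)\subset\bR^d_+$ with a boundary estimate on $B^+_{\cdot}(\bar x_0)$. This chaining does not produce $\tilde\omega(\rho)$ on the right-hand side but rather $(\rho/R)^\gamma\tilde\omega(5R)$ with $R\sim x_{01}$; taking the supremum over $R$ forces the introduction of the modified modulus $\omega^\sharp(\rho)=\sup_{\rho\le R\le1}(\rho/R)^\gamma\tilde\omega(R)$ (which is still Dini, a fact that itself needs a separate lemma). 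The resulting estimate for $\sum_j\Phi(x_0,\kappa^j r)$ has $r^{-d}\|\cU\|_{L^1(B_{3r}^+(x_0))}$ rather than $\Phi(x_0,r)$ as leading term, and $\omega^\sharp$ rather than $\tilde\omega$ in the coefficient/data contributions. Your proposal does not contain this device, and without it the claimed single-step iteration for $\Phi$ at near-boundary interior centers is false, so the Campanato argument does not close. Everything downstream (the $L^\infty$ estimate, the modulus of continuity, and part $(b)$) is fine once this is repaired, but the intermediate regime and the $\omega^\sharp$ modification are the essential new ingredients of the boundary case and need to be spelled out.
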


\begin{theorem}		\label{MT2}
Let $q_0\in (1,\infty)$.
Assume that $(u,p)\in W^{1,q_0}(B_6^+)^d\times L^{q_0}(B_6^+)$ is a weak solution of \eqref{171109@eq2},
where $f_\alpha\in L^{q_0}(B_6^+)^d$ and $g\in L^{q_0}(B_6^+)$.
\begin{enumerate}[$(a)$]
\item
If $A^{\alpha\beta}$, $f_\alpha$, and $g$ are of Dini mean oscillation in $B_4^+$, then we have
$$
(u,p)\in C^1(\overline{B_1^+})^d\times C(\overline{B_1^+}).
$$
\item
If it holds that $[A^{\alpha\beta}]_{C^{\gamma_0}(B_6^+)}
+[f_\alpha]_{C^{\gamma_0}(B_6^+)}
+[g]_{C^{\gamma_0}(B_6^+)}<\infty$ for some $\gamma_0\in (0,1)$,
then we have
$$
(u,p) \in C^{1,\gamma_0}(\overline{B_1^+})^d\times C^{\gamma_0}(\overline{B_1^+}).
$$
\end{enumerate}
\end{theorem}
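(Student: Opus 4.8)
The plan is to derive Theorem \ref{MT2} from Theorem \ref{MT1} by the same route that leads from Theorem \ref{M1} to Theorem \ref{M0} in the interior case: first use Theorem \ref{MT1} to place $\hat U$ and the tangential derivatives $D_\alpha u$, $\alpha\ge2$, in $C(\overline{B_1^+})$ (resp.\ $C^{\gamma_0}(\overline{B_1^+})$), then recover the normal derivatives $D_1u$ and the pressure $p$ algebraically, exploiting the additional regularity of the coefficients and of $g$. To invoke Theorem \ref{MT1} one needs its hypotheses: a function of Dini mean oscillation in $B_4^+$ is also of partially Dini mean oscillation with respect to $x'$ in $B_4^+$, and $[f]_{C^{\gamma_0}_{x'}(B_6^+)}\le[f]_{C^{\gamma_0}(B_6^+)}$, so the oscillation hypotheses transfer. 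The only mismatch is that Theorem \ref{MT1} additionally requires $f_1,g\in L^\infty$, whereas here $f_1,g$ are assumed only to lie in $L^{q_0}$ together with the oscillation condition; but a function of Dini mean oscillation (resp.\ of finite $C^{\gamma_0}$ semi-norm) agrees a.e.\ with a continuous (resp.\ H\"older) function and is thus locally bounded, and since the conclusion sought is purely qualitative I would simply cover $\overline{B_1^+}$ by small half-balls near $\{x_1=0\}$ and small balls away from it, on each of which $f_1$ and $g$ are bounded, and apply Theorem \ref{MT1} (resp.\ the interior Theorem \ref{M1}) after rescaling. This produces
$$
(u,p)\in W^{1,\infty}(B_1^+)^d\times L^\infty(B_1^+),\qquad
\hat U,\ D_\alpha u\in C(\overline{B_1^+})^d,\quad\alpha\in\{2,\ldots,d\},
$$
and, in case $(b)$, the same with $C^{\gamma_0}(\overline{B_1^+})$ in place of $C(\overline{B_1^+})$.

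The algebraic recovery I would carry out in this order. Since $g$ is of Dini mean oscillation (resp.\ H\"older continuous), it has a continuous (resp.\ $C^{\gamma_0}$) representative on $\overline{B_1^+}$, so the equation $\operatorname{div}u=g$ yields
$$
D_1u^1=g-\sum_{i=2}^d D_iu^i\in C(\overline{B_1^+})\quad\bigl(\text{resp.\ }C^{\gamma_0}(\overline{B_1^+})\bigr).
$$
From the definition $\hat U=A^{1\beta}D_\beta u+pe_1-f_1$, for $i\in\{2,\ldots,d\}$ one has
$$
\sum_{j=2}^d A^{11}_{ij}D_1u^j=\hat U^i-\sum_{j=1}^d\sum_{\beta=2}^d A^{1\beta}_{ij}D_\beta u^j-A^{11}_{i1}D_1u^1+f_1^i,
$$
and the right-hand side is continuous (resp.\ $C^{\gamma_0}$) on $\overline{B_1^+}$: the coefficients $A^{\alpha\beta}$ and $f_1$ have that regularity, the tangential derivatives $D_\beta u^j$ with $\beta\ge2$ and the derivative $D_1u^1$ have just been shown to have it, and all these factors are bounded, so the products and sums retain it. By \eqref{ell}, the $(d-1)\times(d-1)$ matrix $[A^{11}_{ij}]_{i,j=2}^d$ satisfies $\sum_{i,j=2}^d A^{11}_{ij}\zeta_i\zeta_j\ge\lambda|\zeta|^2$ for all $\zeta\in\bR^{d-1}$, hence is uniformly invertible and its inverse has the same regularity as its entries; therefore $D_1u^j\in C(\overline{B_1^+})$ (resp.\ $C^{\gamma_0}(\overline{B_1^+})$) for $j\in\{2,\ldots,d\}$. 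With $Du$ now continuous (resp.\ $C^{\gamma_0}$), the $i=1$ component of $\hat U$ gives
$$
p=\hat U^1-\sum_{j=1}^d\sum_{\beta=1}^d A^{1\beta}_{1j}D_\beta u^j+f_1^1\in C(\overline{B_1^+})\quad\bigl(\text{resp.\ }C^{\gamma_0}(\overline{B_1^+})\bigr),
$$
and finally $Du\in C(\overline{B_1^+})$ (resp.\ $C^{\gamma_0}$) together with $u\in W^{1,q_0}$ forces $u\in C^1(\overline{B_1^+})^d$ (resp.\ $C^{1,\gamma_0}$), which is the asserted conclusion in both parts.

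The substance of the argument is entirely in Theorem \ref{MT1} (the substantive boundary work resting on the corresponding boundary analogues of the preliminary lemmas), so for this statement as such there is no genuine obstacle beyond bookkeeping: verifying that Dini mean oscillation and $C^{\gamma_0}$-continuity are preserved under multiplication by bounded factors and under inversion of uniformly nondegenerate matrix-valued functions, and that the quantities involved are uniformly continuous on $B_1^+$ so that they extend continuously to $\overline{B_1^+}$. Alternatively, one could give a self-contained proof of Theorem \ref{MT2}, mirroring Section \ref{170929@sec1}, in order to keep the $L^\infty$-norms of $f_1$ and $g$ out of the intermediate estimates: set up $\Phi(x_0,r)=\inf_{\theta,\Theta}\bigl(\dashint_{B_r^+(x_0)}|Du-\Theta|^q+|p-\theta|^q\,dx\bigr)^{1/q}$ with a fixed $q\in(0,1)$, freeze the coefficients to genuine constants on each ball, and replace the use of \eqref{171110@eq1a} by its half-ball analogue for the constant-coefficient Stokes system with zero Dirichlet data on $\{x_1=0\}$. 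In that route the one genuinely nontrivial ingredient is precisely this boundary constant-coefficient mean-oscillation decay estimate, where the pressure and the divergence constraint must be controlled together near the flat part of the boundary.
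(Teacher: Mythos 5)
Your proposal is correct, and both routes you sketch would yield the stated conclusion, but the paper's actual proof follows your \emph{alternative} route, not your primary one. The paper does not deduce Theorem~\ref{MT2} from Theorem~\ref{MT1}; Section~\ref{171122@sec2} gives a self-contained argument, parallel to the proof of Theorem~\ref{M0} in Section~\ref{170929@sec1}, working directly with the quantities
$\Phi(x_0,r)=\inf_{\theta,\Theta}\bigl(\dashint_{B_r^+(x_0)}|Du-\Theta|^q+|p-\theta|^q\,dx\bigr)^{1/q}$
at interior points and
$\Psi(x_0,r)=\inf_{\theta,\Theta}\bigl(\dashint_{B_r^+(x_0)}|D_1u-\Theta|^q+|D_{x'}u|^q+|p-\theta|^q\,dx\bigr)^{1/q}$
at boundary points (note you omitted $\Psi$ in your alternative sketch; it is needed because $D_{x'}u$ vanishes on $\{x_1=0\}$ and so no affine shift in the tangential directions is allowed there). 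The decay of $\Phi$ and $\Psi$ is established in Lemma~\ref{171121@lem1}, whose key input is the constant-coefficient half-ball mean-oscillation estimate from Lemma~\ref{171109@lem3}, exactly the ingredient you flag at the end. The paper's motivation, stated explicitly after Theorem~\ref{M0}, is that this route keeps the $L^\infty$-norms of $f_1$ and $g$ out of the intermediate estimates, yielding bounds depending only on $L^{q_0}$-norms and mean-oscillation moduli.

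Your primary route — applying Theorem~\ref{MT1}~$(a)$/$(b)$ and then recovering $D_1u^j$ and $p$ algebraically — is also valid as a qualitative reduction. The algebraic step is sound: $D_1u^1$ is recovered from $\operatorname{div}u=g$, the block $[A^{11}_{ij}]_{i,j=2}^d$ is uniformly positive definite by \eqref{ell} so the system for $D_1u^j$, $j\ge 2$, inverts with the same modulus of continuity, and $p$ then follows from $\hat U^1$. The hypothesis mismatch (Theorem~\ref{MT1} asks $f_1,g\in L^\infty$) is indeed repaired as you suggest, since Dini mean oscillation on $B_4^+$ (or $C^{\gamma_0}$) forces a continuous representative, hence local boundedness, and a covering-and-scaling argument then applies Theorems~\ref{MT1} and~\ref{M1} on small (half-)balls contained in $B_4^+$. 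This is fine for membership in $C^1\times C$ (resp.\ $C^{1,\gamma_0}\times C^{\gamma_0}$), which is all the statement asserts, but it introduces $L^\infty$ quantities that the paper's estimates deliberately avoid. In short: your proof works, your ``alternative'' is what the paper does, and the trade-off is precisely between economy of argument (your primary route, piggybacking on \ref{MT1}) and sharper dependence of constants (the paper's self-contained route).
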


We also obtain the following $L^{q_0}$-estimate for $W^{1,1}$-weak solutions.
See Section \ref{180102@sec1} for the proof.

\begin{theorem}		\label{MT3}
Let $q_0\in (1,\infty)$.
Assume that $(u,p)\in W^{1,1}(B_6^+)^d\times L^1(B_6^+)$ is a weak solution of \eqref{171109@eq2}, where $f_\alpha\in L^{q_0}(B_6^+)^d$ and $g\in L^{q_0}(B_6^+)$.
If $A^{\alpha\beta}$ are of partially Dini mean oscillation with respect to $x'$ in $B_4^+$, then we have $(u,p)\in W^{1,q_0}(B_1^+)^d\times L^{q_0}(B_1^+)$ with the estimate
$$
\begin{aligned}
&\|u\|_{W^{1,q_0}(B_1^+)}+\|p\|_{L^{q_0}(B_1^+)} \\
&\le C\big( \|u\|_{W^{1,1}(B_6^+)}+\|p\|_{L^1(B_6^+)}+\|f_\alpha\|_{L^{q_0}(B_6^+)}+\|g\|_{L^{q_0}(B_6^+)}\big),
\end{aligned}
$$
where the constant $C$ depends only on $d$, $\lambda$, $\omega_{A^{\alpha\beta},x'}$, and $q_0$.
\end{theorem}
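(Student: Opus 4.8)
The plan is to follow the proof of Theorem \ref{M3} almost verbatim, replacing its interior inputs by boundary ones. As there, the engine is the duality and bootstrap scheme of Brezis \cite{MR2465684}: pointwise bounds for the solution of a dual Stokes problem, obtained from the regularity theorem, are paired against $(u,p)$ and combined with the $W^{1,q}$-solvability of the Stokes system to raise the integrability of $(u,p)$ in finitely many steps. For the half ball I would use Theorem \ref{MT1} $(a)$ (boundary regularity) together with Theorem \ref{M1} $(a)$ (interior regularity), and the boundary analogue of the $W^{1,q}$-solvability used in Section \ref{171001@sec1}, i.e.\ solvability on $B_6^+$ with homogeneous Dirichlet data on $\partial B_6^+$ for Stokes systems with partially small mean oscillation coefficients.

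First I would localize exactly as in Section \ref{171001@sec1}. Fix $\eta$ with $0\le\eta\le1$, $\eta\equiv1$ on $B_{5/2}$, $\operatorname{supp}\eta\subset B_3$, $|\nabla\eta|\lesssim_d1$, and set $\tilde{A}^{\alpha\beta}_{ij}=\eta A^{\alpha\beta}_{ij}+\lambda(1-\eta)\delta_{\alpha\beta}\delta_{ij}$ and $\tilde{\cL} u=D_\alpha(\tilde{A}^{\alpha\beta}D_\beta u)$. The $\tilde{A}^{\alpha\beta}$ are strongly elliptic, of partially Dini mean oscillation with respect to $x'$ in $B_4^+$ with $\omega_{\tilde{A}^{\alpha\beta},x'}(r)\lesssim_{d,\lambda}\omega_{A^{\alpha\beta},x'}(r)+r$, and coincide with $\lambda\delta_{\alpha\beta}\delta_{ij}$ near the spherical part of $\partial B_6^+$, so the above solvability applies to $\tilde{\cL}$ and to its adjoint $\tilde{\cL}^*$ on $B_6^+$. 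Rewriting the weak formulation for $u$ exactly as in \eqref{171004@eq7} with $B_6^+$ in place of $B_6$, $u$ becomes a $W^{1,1}$-weak solution of the modified system with the additional terms generated by $\eta$.

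I would then carry out the duality step in the two ranges $1<q_0<\frac{d}{d-1}$ and $\frac{d}{d-1}\le q_0<\infty$. In the first range $q_0'>d$; given $\phi_\alpha\in C^\infty_0(B_2^+)^d$ and $\psi\in C^\infty_0(B_2^+)$, solve
$$
\left\{
\begin{aligned}
\tilde{\cL}^* v+\nabla\pi=D_\alpha\phi_\alpha &\quad \text{in }\, B_6^+,\\
\operatorname{div} v=\psi-(\psi)_{B_6^+} &\quad \text{in }\, B_6^+,\\
v=0 &\quad \text{on }\, \partial B_6^+,
\end{aligned}
\right.
$$
with $(v,\pi)\in W^{1,q_0'}_0(B_6^+)^d\times\tilde{L}^{q_0'}(B_6^+)$ and $\|Dv\|_{L^{q_0'}(B_6^+)}+\|\pi\|_{L^{q_0'}(B_6^+)}\lesssim\|\phi_\alpha\|_{L^{q_0'}(B_2^+)}+\|\psi\|_{L^{q_0'}(B_2^+)}$. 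Since $\tilde{A}^{\alpha\beta}_*$ are of partially Dini mean oscillation in $B_4^+$ and equal $\lambda\delta$ near the spherical boundary, a covering argument---Theorem \ref{MT1} $(a)$ on rescaled half-balls centered on $B_3\cap\partial\bR^d_+$, Theorem \ref{M1} $(a)$ at points with $x_1$ bounded away from $0$---gives $Dv,\pi\in L^\infty(B_3^+)$. Then $\zeta u$, with $\zeta\equiv1$ on $B_2$, $\operatorname{supp}\zeta\subset B_{5/2}$, $|\nabla\zeta|\lesssim_d1$, lies in $W^{1,1}_0(B_{5/2}^+)^d$ since $u=0$ on $B_6\cap\partial\bR^d_+$ and $\zeta$ vanishes near the spherical boundary, and since $Dv,\pi$ are bounded on $B_{5/2}^+$ it is admissible as a test function for the $v$-equation; taking also $\phi=\zeta v$ in the $u$-equation and pairing the two identities, the terms supported away from $B_2^+$ cancel and the left side reduces to $\int_{B_2^+}(D_\alpha u\cdot\phi_\alpha+p\psi)\,dx$. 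By H\"older's inequality, the Sobolev and Morrey inequalities on the Lipschitz domain $B_6^+$, and the above estimate, the right side is $\le CM(\|\phi_\alpha\|_{L^{q_0'}(B_2^+)}+\|\psi\|_{L^{q_0'}(B_2^+)})$ with
$$
M:=\|u\|_{W^{1,1}(B_{5/2}^+)}+\|p\|_{L^1(B_{5/2}^+)}+\|f_\alpha\|_{L^{q_0}(B_{5/2}^+)}+\|g\|_{L^{q_0}(B_{5/2}^+)},
$$
so by duality $\|u\|_{W^{1,q_0}(B_2^+)}+\|p\|_{L^{q_0}(B_2^+)}\lesssim M$. For $\frac{d}{d-1}\le q_0<\infty$ the same computation on concentric half-balls gives, for $1\le r<R\le2$ and $\frac{d}{d-1}\le q<\infty$,
$$
\|u\|_{W^{1,q}(B_r^+)}+\|p\|_{L^q(B_r^+)}\lesssim\|u\|_{W^{1,q^*}(B_R^+)}+\|p\|_{L^{q^*}(B_R^+)}+\|f_\alpha\|_{L^q(B_R^+)}+\|g\|_{L^q(B_R^+)},
$$
where $q^*=\frac{dq}{d+q}$ if $q>\frac{d}{d-1}$ and $q^*$ is any number in $(1,q)$ if $q=\frac{d}{d-1}$; iterating this finitely many times lowers the exponent below $\frac{d}{d-1}$, and a final scaling/covering passes from $B_2^+,B_{5/2}^+$ to $B_1^+,B_6^+$, yielding the stated estimate.

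The hard part will be the up-to-the-boundary bound $Dv,\pi\in L^\infty(B_3^+)$. I would obtain it by the covering argument: near the flat portion $B_3\cap\partial\bR^d_+$ apply Theorem \ref{MT1} $(a)$ to the rescaled half-balls $B_{6\rho}^+(0,x_0')\subset B_6^+$ (admissible for a small fixed $\rho$ since $|x_0'|<3$), using that $\phi_\alpha,\psi$ are smooth---hence of Dini mean oscillation---and that $\tilde{A}^{\alpha\beta}_*$ are of partially Dini mean oscillation there, while at points with $x_1$ bounded below apply the interior Theorem \ref{M1} $(a)$ on genuine balls. Crucially, $B_3^+$ stays at positive distance from the spherical part of $\partial B_6^+$ and from the corner $\partial B_6\cap\partial\bR^d_+$, where $v$ is merely a homogeneous constant-coefficient Stokes solution, so no regularity is needed there. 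The only other point requiring care is the boundary $W^{1,q}$-solvability of $\tilde{\cL}^*$ with homogeneous Dirichlet data and its estimate, which should be taken from the boundary $L^q$-theory for Stokes systems with partially small mean oscillation coefficients; everything else is the routine transcription of Section \ref{171001@sec1} with $B_m^+$ in place of $B_m$.
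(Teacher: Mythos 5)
Your proposal follows the paper's overall strategy (localization with a cutoff $\eta$, duality against a dual Stokes problem, bootstrap through $q^*$) and the regularity input is correctly identified (Theorem~\ref{MT1}~$(a)$ near the flat boundary, Theorem~\ref{M1}~$(a)$ in the interior). However, there is one genuine gap: you set up and solve the dual problem directly on $B_6^+$ with $v=0$ on $\partial B_6^+$, asserting that the $W^{1,q_0'}$-solvability of Section~\ref{171001@sec1} carries over ``with $B_m^+$ in place of $B_m$.'' It does not. The half ball $B_6^+$ is only Lipschitz, with a codimension-two corner at $\partial B_6\cap\partial\bR^d_+$, so it fails the domain hypothesis (\cite[Assumption~2.1]{MR3758532}, a Reifenberg-flatness/small-Lipschitz-constant condition) under which \cite[Theorem~2.4]{MR3758532} gives $W^{1,q}$-solvability for all $q\in(1,\infty)$. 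For $q_0$ near $1$ you need $q_0'>d$, and on a Lipschitz domain with a $90^\circ$ edge this range is generally not attainable. Your remark that ``no regularity is needed'' near the corner because $v$ solves a constant-coefficient system there addresses interior/boundary \emph{regularity} of $v$ away from the corner, but not the prior \emph{existence and global $L^{q_0'}$ estimate} of $(v,\pi)$ on $B_6^+$, which is what the duality pairing requires.

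The paper sidesteps this by fixing a domain $\Omega$ with smooth boundary and $B_4^+\subset\Omega\subset B_6^+$: since $\tilde A^{\alpha\beta}$ reduces to $\lambda\delta_{\alpha\beta}\delta_{ij}$ outside $B_3$, the smooth $\Omega$ together with (\ref{180102@eq5}) satisfies \cite[Assumptions~2.1 and~2.2]{MR3758532}, so $W^{1,q}$-solvability for $\tilde{\cL}$ and $\tilde{\cL}^*$ holds on $\Omega$ for every $q\in(1,\infty)$. All supports ($\operatorname{supp}\eta\subset B_3$, $\operatorname{supp}\zeta\subset B_{5/2}$, $\phi_\alpha,\psi\in C_0^\infty(B_2^+)$) lie inside $B_4^+\subset\Omega$, so the localization identity and the duality pairing go through unchanged on $\Omega$. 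With that modification your argument is correct and is essentially the paper's proof.
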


\begin{remark}		\label{171122@rmk1}
By the same reasoning as in Remarks \ref{171008@rmk1} and \ref{171218@rmk1}, we can easily extend the results in Theorems \ref{MT1} and \ref{MT2} to a solution $(u,p)\in W^{1,1}(B_6^+)^d\times L^1(B_6^+)$ of the problem
$$
\left\{
\begin{aligned}
\cL u+\nabla p=f+D_\alpha f_\alpha \quad &\text{in }\, B_6^+,\\
\operatorname{div} u=g \quad &\text{in }\, B_6^+,\\
u=0 \quad &\text{on }\, B_6\cap \partial \bR^d_+,
\end{aligned}
\right.
$$
where $f\in L^q(B_6^+)^d$ with $q>d$.
\end{remark}

Finally, we prove the following weak type-$(1,1)$ estimate.
See Section \ref{171122@sec4} for the proof.

\begin{theorem}		\label{MT4}
Define a bounded linear operator $T_0$ on $L^2(B_6^+)^{d\times d}\times L^2(B_6^+)$ by
$$
T_0(f_1,\ldots,f_d,g)=(D_1u,\ldots, D_d u,p),
$$
where $(u, p)\in W^{1,2}_0(B_6^+)^d\times \tilde{L}^2(B_6^+)$ is a unique weak solution of
\begin{equation}		\label{171122@A1}
\left\{
\begin{aligned}
\cL u +\nabla p=D_\alpha f_\alpha &\quad \text{in }\, B_6^+,\\
\operatorname{div} u=g-(g)_{B_6^+} &\quad \text{in }\, B_6^+.
\end{aligned}
\right.
\end{equation}
If $A^{\alpha\beta}$ are of partially Dini mean oscillation with respect to $x'$  in $B_4^+$ and
\begin{equation}		\label{171218@eq5}
\omega_{A^{\alpha\beta},x'}(r)\le C_0 \bigg(\ln \frac{r}{4}\bigg)^{-2}, \quad \forall r\in (0,1],
\end{equation}
then the operator $T_0$ can be extended on
$$
\big\{(f_1,\ldots,f_d,g)\in L^1(B_6^+)^{d\times d}\times L^1(B_6^+): \operatorname{supp}(f_1,\ldots,f_d,g)\subset B_1^+\big\}
$$
such that for any $t>0$, we have
$$
\big|\{x\in B_1^+: |T_0 (f_1,\ldots,f_d,g)|>t\}\big|\le \frac{C}{t}\int_{B_1^+}\big(|f_\alpha|+|g|\big)\,dx,
$$
where the constant $C$ depends only on $d$, $\lambda$, $\omega_{A^{\alpha\beta},x'}$, and $C_0$.
\end{theorem}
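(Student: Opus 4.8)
The plan is to transcribe the proof of Theorem~\ref{M2} to the half ball, replacing balls $B_\rho(x_0)$ throughout by half balls $B_\rho^+(x_0)$ and the interior regularity estimates by their boundary counterparts from Section~\ref{Sec7}. As in the proof of Theorem~\ref{M2}, I would first factor $T_0=T\circ F$, where $F$ is the pointwise algebraic transformation of the type used in the proofs of Lemma~\ref{170904@lem1} and Theorem~\ref{M2}, built from $(A^{11})^{-1}$ (which exists since ellipticity makes the block $A^{11}$ positive definite); $F$ is bounded and invertible on both $L^1(B_6^+)^{d\times d}\times L^1(B_6^+)$ and $L^2(B_6^+)^{d\times d}\times L^2(B_6^+)$, and it recasts the divergence equation in \eqref{171122@A1} in the cleaner form $\operatorname{div}u=\tilde f_1^1-(\tilde f_1^1)_{B_6^+}$ with a modified source, while $T$ sends the transformed data to $(D_1u,\ldots,D_du,p)$. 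Since $F$ is bounded on $L^1$, it suffices to show that $T$ satisfies the hypothesis of the half-ball analog of Lemma~\ref{170823@lem1} (one of the preliminary lemmas of Section~\ref{Sec6}), i.e., the off-diagonal bound $\int_{B_1^+\setminus B_{cr}^+(x_0)}|Tg|\,dx\lesssim\int_{B_r^+(x_0)\cap B_1^+}|g|\,dx$ for $g$ of zero mean supported in $B_r^+(x_0)\cap B_1^+$, with $R_0=6$, $\mu=1/3$, and $c=6$.

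To establish this bound, I would fix $x_0\in B_1^+$ and $0<r<1/3$, let $\tilde f_\alpha\in\tilde L^2(B_6^+)^d$ and $\tilde g\in\tilde L^2(B_6^+)$ be supported in $B_r^+(x_0)$, and let $(u,p)\in W^{1,2}_0(B_6^+)^d\times\tilde L^2(B_6^+)$ be the corresponding solution of the transformed system. For $R\in[6r,2)$ with $B_1^+\setminus B_{2R}^+(x_0)\ne\emptyset$ and test functions $\phi_\alpha,\psi$ supported in the half-annulus $(B_{2R}^+(x_0)\setminus B_R^+(x_0))\cap B_1^+$, solve the adjoint Dirichlet problem
\[
\left\{
\begin{aligned}
\cL^* v+\nabla\pi=D_\alpha\phi_\alpha &\quad \text{in }\, B_6^+,\\
\operatorname{div}v=\psi-(\psi)_{B_6^+} &\quad \text{in }\, B_6^+,\\
v=0 &\quad \text{on }\, B_6\cap\partial\bR^d_+,
\end{aligned}
\right.
\]
with the $L^2$ bound $\||Dv|+|\pi|\|_{L^2(B_6^+)}\lesssim\||\phi_\alpha|+|\psi|\|_{L^2}$, from the half-ball solvability analog of \cite[Lemma~3.2]{MR3693868} to be established in Section~\ref{Sec6}. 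Since $(v,\pi)$ then solves the homogeneous adjoint Stokes system in $B_R^+(x_0)$ up to the constant divergence $-(\psi)_{B_6^+}$, and $\cL^*$ has coefficients $A^{\alpha\beta}_*=(A^{\beta\alpha})^\top$ inheriting the ellipticity and partially Dini structure, the boundary a priori estimates of Section~\ref{Sec7} (the half-ball counterparts of \eqref{171128@A1} and \eqref{171219@eq2}) give H\"older-type oscillation control on $B_r^+(x_0)\subset B_{R/6}^+(x_0)$ of $V:=A^{1\beta}_* D_\beta v+\pi e_1$, of $D_{x'}v$, and, through $D_1v^1=-\sum_{i=2}^d D_iv^i-(\psi)_{B_6^+}$, of $D_1v^1$, with modulus $(r/R)^\gamma+\int_0^{2r}\frac{\tilde{\omega}_{A^{\alpha\beta},x'}(t)}{t}\,dt$ and constant $\lesssim R^{-d/2}\||Dv|+|\pi|\|_{L^2(B_R^+(x_0))}+|(\psi)_{B_6^+}|$. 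Testing the $u$-equation with $v$ and the $v$-equation with $u$ (legitimate since both vanish on the flat boundary), pairing the mean-zero data against the average-subtracted $D_\alpha v$, $V$, and $D_1v^1$, and using $\int_0^{2r}\frac{\tilde{\omega}_{A^{\alpha\beta},x'}(t)}{t}\,dt\lesssim(\ln\frac1r)^{-1}$ (a consequence of \eqref{171218@eq5}, as in \cite[Lemma~3.4]{MR3620893}), I obtain
\[
\left|\int_{(B_{2R}^+(x_0)\setminus B_R^+(x_0))\cap B_1^+}(D_\alpha u\cdot\phi_\alpha+p\,\psi)\,dx\right|\lesssim\left(\Big(\tfrac rR\Big)^\gamma+\Big(\ln\tfrac1r\Big)^{-1}\right)R^{-d/2}M\,\||\phi_\alpha|+|\psi|\|_{L^2},
\]
where $M:=\int_{B_r^+(x_0)}(|\tilde f_\alpha|+|\tilde g|)\,dx$; by duality and H\"older's inequality this upgrades to $\int_{(B_{2R}^+(x_0)\setminus B_R^+(x_0))\cap B_1^+}(|Du|+|p|)\,dx\lesssim\big((r/R)^\gamma+(\ln\frac1r)^{-1}\big)M$.

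Finally, I would cover $B_1^+\setminus B_{6r}^+(x_0)$ by the half-annuli with $R=2^i\cdot 3r$, $i=1,\ldots,N-1$, where $N$ is the least integer with $B_1^+\subset B_{2^N\cdot 3r}^+(x_0)$, so that $N-1\lesssim\ln(1/r)$; summing the previous estimate gives $\int_{B_1^+\setminus B_{6r}^+(x_0)}(|Du|+|p|)\,dx\lesssim\sum_{i=1}^{N-1}\big(2^{-i\gamma}+(\ln(1/r))^{-1}\big)M\lesssim M$, since the geometric series is bounded and $(N-1)(\ln(1/r))^{-1}\lesssim 1$. This verifies the hypothesis of the half-ball analog of Lemma~\ref{170823@lem1} for $T$, hence for $T_0=T\circ F$, and finishes the proof.

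I expect the main obstacle to lie not in this transcription but in the boundary machinery it rests on. One needs the half-ball analogs of Lemmas~\ref{170901@lem1}, \ref{170908@lem1}, \ref{170904@lem1}, and \ref{171020@lem1z} — boundary Lipschitz and $L^q$-mean-oscillation estimates for the adjoint Stokes system with $x_1$-measurable coefficients under the Dirichlet condition on $\{x_1=0\}$, together with the iteration producing the H\"older modulus — and one must handle the geometry uniformly over $x_0\in B_1^+$ through the sets $B_\rho^+(x_0)\cap B_1^+$, exactly as in \cite{MR3747493}, so that a single family of boundary estimates covers both the genuinely boundary regime ($x_{01}\lesssim R$) and the interior regime ($x_{01}\gtrsim R$). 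One must also check that the transformation $F$ and the logarithmic decay assumption \eqref{171218@eq5} localize correctly to half balls. Granting the preliminaries of Sections~\ref{Sec6} and \ref{Sec7}, everything else is a routine repetition of the interior argument.
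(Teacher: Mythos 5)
Your proposal takes the same overall route as the paper: factor $T_0 = T\circ F$ with $F$ the pointwise algebraic change of variables built from $(A^{11})^{-1}$, verify the off-diagonal bound for $T$ via duality against the adjoint half-ball Stokes problem, apply boundary H\"older-type oscillation estimates to $V$, $D_{x'}v$, $D_1v^1$ in the interior region $B_R^+(x_0)$, and cover $B_1^+\setminus B_{cr}^+(x_0)$ by dyadic half-annuli. The log-decay hypothesis \eqref{171218@eq5} entering precisely to control the oscillation integral by $(\ln(1/r))^{-1}$ and to absorb the $\lesssim \ln(1/r)$ terms in the dyadic sum is also exactly the paper's mechanism.

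There is, however, a concrete gap in the geometry that your parameter choices cannot survive. You take over the interior values $\mu = 1/3$ and $c = 6$ (so $R \geq 6r$) from Theorem~\ref{M2}, and then claim that the boundary a priori estimate gives oscillation control on $B_r^+(x_0)\subset B_{R/6}^+(x_0)$. But the boundary estimate \eqref{171128@B1} is stated on a half ball centered at a \emph{boundary} point, after scaling: to apply it inside $B_R^+(x_0)$ when $x_0$ is close to $\partial\bR^d_+$, one must pass to $\bar{x}_0 = (0,x_0')$ and arrange that $B_r^+(x_0)\subset B_{R/12}^+(\bar{x}_0)\subset B_{R/2}^+(\bar{x}_0)\subset B_R^+(x_0)$. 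The first inclusion needs $r + x_{01} \le R/12$, which fails already when $x_{01}=0$ and $R=6r$ (since $r > R/12$). This is exactly why the paper sets $\mu = 1/48$, $c = 96$ (hence $R\ge 96r$) and splits into the two cases $x_{01}\ge R/16$ (pure interior, $B_{R/16}(x_0)\subset\bR^d_+$) and $x_{01}<R/16$ (genuine boundary, pass to $\bar{x}_0$). You do gesture at this dichotomy, but you do not carry it out, and the interior constants you adopt are incompatible with the boundary side of it. A related omission: in the boundary case the relevant modulus is $\int_0^{10r}\frac{\tilde{\omega}_{A^{\alpha\beta},x'}(t)+\omega^{\sharp}_{A^{\alpha\beta},x'}(t)}{t}\,dt$ — the $\omega^{\sharp}$ contribution coming from \eqref{171128@B1} — not just $\int_0^{2r}\frac{\tilde{\omega}_{A^{\alpha\beta},x'}(t)}{t}\,dt$, and the reduction to $(\ln(1/r))^{-1}$ requires that this extra term obeys the same decay (this is checked in \eqref{171219@eq4}). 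With the case split and the adjusted parameters, the rest of your argument does indeed follow the interior template.
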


\section{Preliminary lemmas}		\label{Sec6}
In this section, we prove some preliminary results which will be used in the proofs of the main theorems in Section \ref{171122@sec1}.
Throughout this section, we set
$$
\cL_0 u=D_\alpha(\bar{A}^{\alpha\beta}D_\beta u),
$$
where $\bar{A}^{\alpha\beta}=\bar{A}^{\alpha\beta}(x_1)$.

The following lemma is an analog of Lemma \ref{170901@lem1}.

\begin{lemma}		\label{171109@lem1}
Let $0<r<R$ and $\ell$ be a constant.
Let $(u,p)\in W^{1,2}(B_R^+)^d\times L^2(B_R^+)$ satisfy
\begin{equation}		\label{171109@eq3}
\left\{
\begin{aligned}
\cL_0 u+\nabla p=0 &\quad \text{in }\, B_R^+,\\
\operatorname{div} u=\ell &\quad \text{in }\, B^+_R,\\
u=0 &\quad \text{on }\, B_R\cap \partial \bR^d_+.
\end{aligned}
\right.
\end{equation}
Then we have
\begin{align}
\label{171109@eq3a}
\|Du\|_{L^\infty(B_r^+)}&\lesssim_{d,\lambda} (R-r)^{-d/2}\|Du\|_{L^2(B_R^+)},\\
\label{171109@eq3b}
[U]_{C^{0,1}(B_r^+)}+[D_{x'}u]_{C^{0,1}(B_r^+)} &\lesssim_{d,\lambda} (R-r)^{-d/2-1}\|Du\|_{L^2(B_R^+)},
\end{align}
where $U:=\bar{A}^{1\beta}D_\beta u+p e_1$
and the $C^{0,1}$ semi-norm is defined as in \eqref{171218@eq1}.
If $\cL_0$ has constant coefficients, then we have
\begin{equation}			\label{171110@eq3}
[Du]_{C^{0,1}(B_r^+)}+[p]_{C^{0,1}(B_r^+)}\lesssim_{d,\lambda} (R-r)^{-d/2-1}\|Du\|_{L^2(B_R^+)}.
\end{equation}
\end{lemma}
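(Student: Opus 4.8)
The plan is to mirror the proof of Lemma~\ref{170901@lem1}, substituting the interior estimates for the Stokes system with their counterparts up to a flat boundary. Because $\bar{A}^{\alpha\beta}=\bar{A}^{\alpha\beta}(x_1)$, $u$ vanishes on $B_R\cap\partial\bR^d_+$, and $\operatorname{div}u\equiv\ell$ is constant, the boundary analogue of \cite[Lemma 4.1~$(i)$]{MR3758532} gives \eqref{171109@eq3a} together with the bound
$$
\|p\|_{L^\infty(B_r^+)}\lesssim_{d,\lambda}(R-r)^{-d/2}\big(\|Du\|_{L^2(B_R^+)}+\|p\|_{L^2(B_R^+)}\big).
$$

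The second step is to gain regularity in the tangential directions $x'$. Since the coefficients are independent of $x'$ and $u=0$ on $B_R\cap\partial\bR^d_+$, the tangential difference quotients $\delta^i_h u$, $i\in\{2,\dots,d\}$, still vanish on the flat part of the boundary and are therefore admissible test functions; the difference-quotient energy argument (the boundary analogue of \cite[Lemmas 4.1 and 4.2]{arXiv:1604.02690v2}) then yields $(D_iu,D_ip)\in W^{1,2}(B^+_{(R+r)/2})^d\times L^2(B^+_{(R+r)/2})$ for $i\in\{2,\dots,d\}$ with
$$
\|D(D_{x'}u)\|_{L^2(B^+_{(R+r)/2})}+\|D_{x'}p\|_{L^2(B^+_{(R+r)/2})}\lesssim_{d,\lambda}(R-r)^{-1}\|Du\|_{L^2(B_R^+)}.
$$
Differentiating \eqref{171109@eq3} in a tangential direction preserves the homogeneous equation and the zero Dirichlet condition, so $(D_{x'}u,D_{x'}p)$ again satisfies \eqref{171109@eq3} with $\ell=0$ in $B^+_{(R+r)/2}$; applying the first step to this pair and inserting the previous inequality gives
$$
\|D(D_{x'}u)\|_{L^\infty(B_r^+)}+\|D_{x'}p\|_{L^\infty(B_r^+)}\lesssim_{d,\lambda}(R-r)^{-d/2-1}\|Du\|_{L^2(B_R^+)},
$$
and in particular $\|D_{x'}U\|_{L^\infty(B_r^+)}$ is bounded by the right-hand side. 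For the normal derivative, the equation $\cL_0u+\nabla p=0$ and the $x'$-independence of $\bar{A}^{\alpha\beta}$ give
$$
D_1U=-\sum_{\alpha=2}^d\big(\bar{A}^{\alpha\beta}D_{\alpha\beta}u+D_\alpha p\,e_\alpha\big),
$$
so $\|D_1U\|_{L^\infty(B_r^+)}$ is controlled by the same quantity. Since $B_r^+$ is convex, $[U]_{C^{0,1}(B_r^+)}=\|DU\|_{L^\infty(B_r^+)}$ and similarly for $D_{x'}u$, and \eqref{171109@eq3b} follows.

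For the constant-coefficient case I would argue algebraically, as in the proof of Lemma~\ref{170908@lem1}. Reading off the components $i\in\{2,\dots,d\}$ of $U=\bar{A}^{1\beta}D_\beta u+pe_1$, using that $[\bar{A}^{11}_{ij}]_{i,j=2}^d$ is nondegenerate by ellipticity, and eliminating $D_1u^1$ via $D_1u^1=\ell-\sum_{j=2}^dD_ju^j$, one solves for $Du$ and finds $|Du|\lesssim|U|+|D_{x'}u|+|\ell|$; since $\bar{A}$ and $\ell$ are constant, this upgrades to $[Du]_{C^{0,1}(B_r^+)}\lesssim[U]_{C^{0,1}(B_r^+)}+[D_{x'}u]_{C^{0,1}(B_r^+)}$. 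Then $p=U^1-\bar{A}^{1\beta}_{1j}D_\beta u^j$ yields $[p]_{C^{0,1}(B_r^+)}\lesssim[U]_{C^{0,1}(B_r^+)}+[Du]_{C^{0,1}(B_r^+)}$, and \eqref{171110@eq3} follows from \eqref{171109@eq3b}.

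I expect the main obstacle to lie not in the algebra but in the two boundary ingredients of the first two steps: one must check that the boundary $L^\infty$ estimate for $(Du,p)$ for coefficients merely measurable in $x_1$ holds on the half ball, and that the tangential difference-quotient scheme produces no commutator terms---which is exactly guaranteed by the $x'$-independence of the coefficients and by the fact that tangential differences of $u$ preserve the homogeneous Dirichlet condition. Once these are in place, every remaining step is identical to the interior case.
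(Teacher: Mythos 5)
Your proposal is correct and follows essentially the same route as the paper: transplant the proof of Lemma~\ref{170901@lem1} to the half ball, replacing the interior Lipschitz estimate of \cite[Lemma 4.1~$(i)$]{MR3758532} by its boundary counterpart \cite[Lemma 4.1~$(ii)$]{MR3758532}, using tangential difference quotients (which preserve both the equation and the Dirichlet boundary condition) to control $D_{x'}U$ and $D(D_{x'}u)$, and recovering $D_1U$ from the equation; for the constant-coefficient case, the paper does exactly your algebraic elimination via $D_1u^1=\ell-\sum_{i\ge2}D_iu^i$, the nondegeneracy of $[\bar{A}^{11}_{ij}]_{i,j=2}^d$, and $pe_1=U-\bar{A}^{1\beta}D_\beta u$.
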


\begin{proof}
The proofs of \eqref{171109@eq3a} and \eqref{171109@eq3b} are nearly the same as that of Lemma \ref{170901@lem1}, by using  \cite[Lemma 4.1 $(ii)$]{MR3758532} instead of \cite[Lemma 4.1 $(i)$]{MR3758532}.

To show \eqref{171110@eq3}, we note that $D_1 u^1=\ell-\sum_{i=2}^d D_i u^i$.
Using this together with \eqref{171109@eq3b}, we have
\begin{equation}		\label{171110@eq3a}
[D_1u^1]_{C^{0,1}(B_r^+)}\lesssim (R-r)^{-d/2-1}\|Du\|_{L^2(B_R^+)}.
\end{equation}
Since $\bar{A}^{\alpha\beta}$ are constant and
$$
\sum_{j=2}^d \bar{A}^{11}_{ij} D_1 u^j=U^i - \sum_{j=1}^d \sum_{\beta=2}^d \bar{A}^{1\beta}_{ij}D_\beta u^j-\bar{A}^{11}_{i1}D_1 u^1, \quad i\in \{2,\ldots,d\},
$$
by \eqref{171109@eq3b} and \eqref{171110@eq3a}, we obtain that
$$
[Du]_{C^{0,1}(B_r^+)}\lesssim (R-r)^{-d/2-1}\|Du\|_{L^2(B_R^+)}.
$$
Therefore, we get \eqref{171110@eq3} from the relation $pe_1=U-\bar{A}^{1\beta}D_\beta u$.
\end{proof}

Combining Lemmas \ref{170901@lem1} and \ref{171109@lem1}, we get the following estimates on the intersection of $\bR^d_+$ and any ball centered at $z\in \bR^d_+$.

\begin{lemma}		\label{171109@lem2}
Let $z\in \bR^d_+$, $0<r<R$, and $\ell$ be a constant.
Let $(u,p)\in W^{1,2}(B_R^+(z))^d\times L^2(B_R^+(z))$ satisfy \eqref{171109@eq3} with $B_R^+(z)$ and $B_R(z)$ in place of $B_R^+$ and $B_R$, respectively.
Then we have
\begin{align}
\label{171109@eq3c}
\|Du\|_{L^\infty(B_r^+(z))}&\lesssim_{d,\lambda} (R-r)^{-d/2}\|Du\|_{L^2(B_R^+(z))},\\
\label{171109@eq3d}
[U]_{C^{0,1}(B_r^+(z))}+[D_{x'}u]_{C^{0,1}(B_r^+(z))} &\lesssim_{d,\lambda} (R-r)^{-d/2-1}\|Du\|_{L^2(B_R^+(z))},
\end{align}
where $U:=\bar{A}^{1\beta}D_\beta u+p e_1$.
If $\cL_0$ has constant coefficients, then we have
\begin{equation}			\label{171110@eq4}
[Du]_{C^{0,1}(B_r^+(z))}+[p]_{C^{0,1}(B_r^+(z))}\lesssim_{d,\lambda} (R-r)^{-d/2-1}\|Du\|_{L^2(B_R^+(z))}.
\end{equation}
\end{lemma}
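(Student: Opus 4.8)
\emph{Proof proposal.} The plan is to reduce everything to the two cases already settled: the interior estimate of Lemma~\ref{170901@lem1} and the flat-boundary estimate of Lemma~\ref{171109@lem1}. Write $d_0:=R-r$ and, for $x=(x_1,x')\in\bR^d_+$, let $\bar x:=(0,x')$ denote its projection to $\partial\bR^d_+$. Since $\cL_0$ has coefficients depending only on $x_1$, all of these lemmas remain valid with the same constants for balls centered at an arbitrary interior point (translating in all variables) and for half-balls centered at an arbitrary boundary point (translating only in $x'$); I will use them in that form.

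First I set up a pointwise dichotomy at a point $x\in B_r^+(z)$. If $x_1\ge d_0/8$, then $B_{d_0/8}(x)\subset\bR^d_+$, and since $|y-z|\le|y-x|+|x-z|<d_0/8+r<R$ for every $y\in B_{d_0/8}(x)$, in fact $B_{d_0/8}(x)\subset B_R^+(z)$; hence $(u,p)$ solves \eqref{170908@eq2} in $B_{d_0/8}(x)$, and Lemma~\ref{170901@lem1} applied with radii $d_0/16<d_0/8$ about $x$ gives
\[
\|Du\|_{L^\infty(B_{d_0/16}(x))}+d_0\big([U]_{C^{0,1}(B_{d_0/16}(x))}+[D_{x'}u]_{C^{0,1}(B_{d_0/16}(x))}\big)\lesssim_{d,\lambda} d_0^{-d/2}\|Du\|_{L^2(B_R^+(z))}.
\]
If instead $x_1<d_0/8$, then for $y\in B_{d_0/2}(\bar x)$ one has $|y-z|\le|y-\bar x|+|\bar x-x|+|x-z|<d_0/2+d_0/8+r=(5R+3r)/8<R$, so $B_{d_0/2}^+(\bar x)\subset B_R^+(z)$ and its flat face lies in $B_R(z)\cap\partial\bR^d_+$; hence $(u,p)$ solves \eqref{171109@eq3} in $B_{d_0/2}^+(\bar x)$, and Lemma~\ref{171109@lem1} applied with radii $d_0/4<d_0/2$ about $\bar x$ gives the analogous bound with $B_{d_0/16}(x)$ replaced by $B_{d_0/4}^+(\bar x)$, a set containing $x$ since $|x-\bar x|=x_1<d_0/8$. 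In the constant-coefficient case one obtains in addition the bounds for $[Du]_{C^{0,1}}$ and $[p]_{C^{0,1}}$ on the same balls: in the interior sub-case from \eqref{170908@eq1a} after a relabeling of the coordinate axes (as in the proof of Lemma~\ref{170908@lem1}), and in the boundary sub-case directly from \eqref{171110@eq3}.

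Taking the supremum of the $L^\infty$ part over $x\in B_r^+(z)$ yields \eqref{171109@eq3c} at once (and likewise the $\|Du\|_{L^\infty}$, $\|p\|_{L^\infty}$ information in the constant-coefficient case). For the Lipschitz semi-norms the only subtlety is that a $C^{0,1}$ semi-norm is not subadditive over an arbitrary cover; here one uses that $B_r^+(z)$ is convex. Given $x,y\in B_r^+(z)$, subdivide the segment $[x,y]\subset B_r^+(z)$ into $N\simeq 1+16|x-y|/d_0$ equal pieces with endpoints $x=w_0,w_1,\dots,w_N=y$, so $|w_i-w_{i+1}|\le d_0/16$; by the dichotomy each consecutive pair lies in one common ball ($B_{d_0/16}(w_i)$ or $B_{d_0/4}^+(\bar w_i)$) on which $[U]_{C^{0,1}}\lesssim_{d,\lambda} d_0^{-d/2-1}\|Du\|_{L^2(B_R^+(z))}$, whence
\[
|U(x)-U(y)|\le\sum_{i=0}^{N-1}|U(w_i)-U(w_{i+1})|\lesssim_{d,\lambda} d_0^{-d/2-1}\|Du\|_{L^2(B_R^+(z))}\sum_{i=0}^{N-1}|w_i-w_{i+1}|=d_0^{-d/2-1}\|Du\|_{L^2(B_R^+(z))}\,|x-y|,
\]
and the same for $D_{x'}u$, and for $Du$ and $p$ in the constant-coefficient case. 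Dividing by $|x-y|$ and taking the supremum gives \eqref{171109@eq3d} and \eqref{171110@eq4}. (Equivalently, by convexity of $B_r^+(z)$ one may use $[f]_{C^{0,1}(B_r^+(z))}=\|Df\|_{L^\infty(B_r^+(z))}$ and apply the pointwise dichotomy directly to $\|Df\|_{L^\infty}$.) No analytic input beyond Lemmas~\ref{170901@lem1} and \ref{171109@lem1} is needed; I expect the only place requiring care to be the bookkeeping of the fractions of $d_0=R-r$, in particular verifying that the near-boundary half-balls $B_{d_0/2}^+(\bar x)$ remain inside $B_R^+(z)$.
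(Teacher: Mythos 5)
Your proposal is correct and follows essentially the same strategy as the paper: fix a point in $B_r^+(z)$, and split into the interior case (apply Lemma~\ref{170901@lem1} on a small ball contained in $\bR^d_+\cap B_R(z)$) and the near-boundary case (project to $\bar x=(0,x')$ and apply Lemma~\ref{171109@lem1} on a half-ball centered at $\bar x$); the paper uses the threshold $x_1\gtrless (R-r)/3$ and radius $(R-r)/6$ where you use $x_1\gtrless d_0/8$ with slightly different radii, but the bookkeeping checks out either way. The paper explicitly writes out only the $L^\infty$ estimate \eqref{171109@eq3c} and dismisses \eqref{171109@eq3d}--\eqref{171110@eq4} as ``almost the same with obvious modifications,'' whereas you correctly identify and handle the one genuine subtlety — that the local $C^{0,1}$ seminorm bounds must be globalized — via a chaining argument along segments in the convex set $B_r^{+}(z)$ (equivalently, via $[f]_{C^{0,1}}=\|Df\|_{L^\infty}$ on a convex domain); this is a useful elaboration of what the paper leaves implicit.
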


\begin{proof}
We only prove the estimate \eqref{171109@eq3c} because the proofs of \eqref{171109@eq3d} and \eqref{171110@eq4} are almost the same with obvious modifications.
Let $x_0\in B_r^+(z)$ and $\rho=(R-r)/6$.
If $B_{2\rho}(x_0)\cap \partial \bR^d_+=\emptyset$, then by \eqref{170908@eq1} we have
$$
\|Du\|_{L^\infty(B_\rho(x_0))}\lesssim \rho^{-d/2}\|Du\|_{L^2(B_{2\rho}(x_0))}\lesssim (R-r)^{-d/2}\|Du\|_{L^2(B_R^+(z))}.
$$
On the other hand, if $B_{2\rho}(x_0)\cap \partial \bR^d_+\neq \emptyset$, we set $\hat{x}_0:=(0, x_0')$.
Then we have
$$
B_{\rho}^+(x_0)\subset B_{3\rho}^+(\hat{x}_0)\subset B_{4\rho}^+(\hat{x}_0)\subset B_R^+(z).
$$
Thus by \eqref{171109@eq3a} with translating the coordinates, we have
$$
\|Du\|_{L^\infty(B_{\rho}^+(x_0))}  \lesssim \rho^{-d/2}\|Du\|_{L^2(B_{4\rho}^+(\hat{x}_0))} \lesssim (R-r)^{-d/2}\|Du\|_{L^2(B_R^+(z))}.
$$
Combining the above two inequalities, we get \eqref{171109@eq3c}.
\end{proof}

Similar to Lemma \ref{170908@lem1}, we have $L^q$-mean oscillation estimates on a half ball.

\begin{lemma}		\label{171109@lem3}
Let $0<r\le R/2$ and $\ell$ be a constant.
Let $(u,p)\in W^{1,2}(B_R^+)^d\times L^2(B_R^+)$ satisfy \eqref{171109@eq3}.
Then for any $q\in (0,1)$, we have
$$
\begin{aligned}
&\left(\dashint_{B_r^+}|U-(U)_{B_r^+}|^q+|D_{x'}u|^q\,dx\right)^{1/q}\\
&\lesssim_{d,\lambda,q}
\frac{r}{R}
\inf_{\theta\in \bR^d}\bigg(\dashint_{B_R^+} |U-\theta |^q+|D_{x'}u|^q\,dx\bigg)^{1/q},
\end{aligned}
$$
where $U:=\bar{A}^{1\beta}D_\beta u+pe_1$.
If $\cL_0$ has constant coefficients, then we have
$$
\begin{aligned}
&\left(\dashint_{B_r^+}\big|D_1u-(D_1u)_{B_r^+}\big|^q+|D_{x'}u|^q+\big|p-(p)_{B_r^+}\big|^q\,dx\right)^{1/q}\\
&\lesssim_{d,\lambda,q}
\frac{r}{R}
\inf_{\theta\in \bR^d}\bigg(\dashint_{B_R^+} |D_1u-\theta|^q+|D_{x'}u|^q\,dx\bigg)^{1/q}.
\end{aligned}
$$
\end{lemma}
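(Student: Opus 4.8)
The plan is to transcribe the proof of Lemma~\ref{170908@lem1} to half-balls, using Lemma~\ref{171109@lem1} (the boundary Lipschitz and $C^{0,1}$ estimates) in place of Lemma~\ref{170901@lem1}. The one genuinely new ingredient is the Dirichlet condition: since $u=0$ on $B_R\cap\partial\bR^d_+$, the tangential derivatives $D_{x'}u$ vanish on the flat portion of the boundary. This is why the statement keeps $|D_{x'}u|^q$ itself rather than its oscillation, and takes the infimum on the right-hand side only over $\theta\in\bR^d$: there is no freedom to subtract a constant matrix from $D_{x'}u$.

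First I would treat the case $\ell=0$: if $(u,p)$ solves \eqref{171109@eq3} with $\ell=0$, then
\[
\Big(\dashint_{B_r^+}|U-(U)_{B_r^+}|^q+|D_{x'}u|^q\,dx\Big)^{1/q}\lesssim_{d,\lambda,q}\frac rR\Big(\dashint_{B_R^+}|U|^q+|D_{x'}u|^q\,dx\Big)^{1/q}.
\]
Starting from \eqref{171109@eq3a}, the standard iteration argument (cf.\ \cite[pp.~80--82]{MR1239172}) together with interpolation between $L^q$ and $L^\infty$ gives $\|Du\|_{L^\infty(B_\rho^+)}\lesssim(R-\rho)^{-d/q}\|Du\|_{L^q(B_R^+)}$ for $0<\rho<R$; plugging this (with $\rho=(r+R)/2$) and \eqref{171109@eq3b} into the interpolation inequality yields $[U]_{C^{0,1}(B_r^+)}+[D_{x'}u]_{C^{0,1}(B_r^+)}\lesssim(R-r)^{-d/q-1}\|Du\|_{L^q(B_R^+)}$. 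Since $r\le R/2$, the oscillation of $U$ over $B_r^+$ is then $\lesssim r[U]_{C^{0,1}(B_r^+)}\lesssim\frac rR(\dashint_{B_R^+}|Du|^q)^{1/q}$, and for $x=(x_1,x')\in B_r^+$ the point $(0,x')$ lies on $B_R\cap\partial\bR^d_+$, so $D_{x'}u(0,x')=0$ and $|D_{x'}u(x)|\le x_1[D_{x'}u]_{C^{0,1}(B_r^+)}\lesssim\frac rR(\dashint_{B_R^+}|Du|^q)^{1/q}$. It remains to bound $\|Du\|_{L^q(B_R^+)}$ by $\|U\|_{L^q}+\|D_{x'}u\|_{L^q}$: exactly as in the interior case, ellipticity makes $[\bar A^{11}_{ij}]_{i,j=2}^d$ invertible, so the definition of $U$ lets one solve for $D_1u^j$ ($j\ge2$) in terms of $U$, $D_1u^1$, and $D_{x'}u$, while $D_1u^1=-\sum_{j\ge2}D_ju^j$ since $\operatorname{div}u=0$; hence $|Du|\lesssim|U|+|D_{x'}u|$ pointwise.

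To remove the restriction $\ell=0$ and produce the infimum, fix $\theta\in\bR^d$ and, using the nondegeneracy of $[\bar A^{11}_{ij}]_{i,j=2}^d$, choose bounded functions $u_0=(u_0^1(x_1),\dots,u_0^d(x_1))^\top$ and $p_0=p_0(x_1)$ with $u_0^1=\ell$ and $\bar A^{11}u_0+p_0e_1=\theta$. Set $u_e(x)=u(x)-\int_0^{x_1}u_0\,dy_1$ and $p_e=p-p_0$. Integrating from $0$ preserves $u_e=0$ on $B_R\cap\partial\bR^d_+$, and a direct computation shows $(u_e,p_e)$ solves \eqref{171109@eq3} with $\ell=0$; moreover $\bar A^{1\beta}D_\beta u_e+p_ee_1=U-\theta$ and $D_{x'}u_e=D_{x'}u$. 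Applying the $\ell=0$ case to $(u_e,p_e)$ and taking the infimum over $\theta\in\bR^d$ gives the first estimate. For the constant-coefficient assertion I would instead invoke \eqref{171110@eq3} directly---it controls $[Du]_{C^{0,1}(B_r^+)}+[p]_{C^{0,1}(B_r^+)}$ for every constant $\ell$, so no reconstruction of $Du$ from $U$ is needed---and combine it with the same $L^q\to L^\infty$ iteration and the boundary vanishing of $D_{x'}u$ to obtain, for $(u,p)$ solving \eqref{171109@eq3} with constant coefficients,
\[
\Big(\dashint_{B_r^+}|D_1u-(D_1u)_{B_r^+}|^q+|D_{x'}u|^q+|p-(p)_{B_r^+}|^q\,dx\Big)^{1/q}\lesssim\frac rR\Big(\dashint_{B_R^+}|Du|^q\,dx\Big)^{1/q}.
\]
Finally, for $\theta\in\bR^d$ I would apply this to $(u_e,p)=(u-\theta x_1,p)$, which again solves \eqref{171109@eq3} (with a shifted constant divergence) since $\theta x_1$ vanishes at $x_1=0$ and $\cL_0(\theta x_1)=0$; using $D_1u_e=D_1u-\theta$ and $D_{x'}u_e=D_{x'}u$ and taking the infimum over $\theta$ completes the proof.

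The bulk of the argument is a routine transcription of Lemma~\ref{170908@lem1}; the only point requiring care is the interaction with the boundary condition---one must check that the correction terms $\int_0^{x_1}u_0$ and $\theta x_1$ vanish on $\{x_1=0\}$ (they do, by construction), and must use $D_{x'}u|_{\{x_1=0\}}=0$ to gain the factor $r/R$ for the $D_{x'}u$-term without subtracting a constant from it.
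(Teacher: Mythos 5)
Your proof is correct and matches the paper's intended argument: the paper's own proof is a one-line remark pointing to the vanishing of $D_{x'}u$ on $\{x_1=0\}$ together with Lemma \ref{171109@lem2} and the proof of Lemma \ref{170908@lem1}, and your write-up is the natural transcription it refers to. You correctly identified both the genuinely new ingredient (the Dirichlet condition forcing $D_{x'}u$ to vanish on the flat boundary, which is why the lemma controls $|D_{x'}u|$ itself rather than its oscillation and takes the infimum only over a shift $\theta\in\bR^d$ of $U$) and the needed adaptation of the shift argument (integrating $u_0$ from $0$ rather than $-R$, and using $\theta x_1$ in the constant-coefficient case, so the corrections vanish on $\{x_1=0\}$ and preserve the boundary condition).
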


\begin{proof}
Since $u\equiv 0$ on $B_R\cap \partial \bR^d_+$, we have $D_{x'}u(0)=0$.
Using this and Lemma \ref{171109@lem2}, and following the proof of Lemma \ref{170908@lem1}, one can check that the estimates in the lemma hold.
We omit the details.
\end{proof}

\begin{lemma}		\label{171110@lem1}
Let $T$ be a bounded linear operator from $L^2(B_{R_0}^+)^k$ to $L^2(B_{R_0}^+)^k$, where $R_0\ge 4$ and $k\in \{1,2,\ldots\}$.
Let $\mu<1$, $c>1$, and $C>0$ be constants.
Suppose that for any $x_0\in B_1^+$, $0<r<\mu$, and
$$
g\in \tilde{L}^2(B_{R_0}^+)^k \, \text{ with }\, \operatorname{supp} g\subset B_r^+(x_0)\cap B_1^+,
$$
we have
$$
\int_{B_1^+\setminus B_{cr}(x_0)}|Tg|\,dx\le C\int_{B_r^+(x_0)\cap B_1^+}|g|\,dx.
$$
Then the operator $T$ can be extended on
$$
\big\{ f\in L^1(B_{R_0}^+)^k:\operatorname{supp}f\subset B_1^+\big\}
$$
such that for any $t>0$, we have
$$
\big|\{x\in B_1^+:|Tf(x)|>t\}\big|\lesssim_{d,k,\mu,c,C}\frac{1}{t}\int_{B_1^+}|f|\,dx.
$$
\end{lemma}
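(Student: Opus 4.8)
The plan is to follow the proof of Lemma \ref{170823@lem1} essentially verbatim, replacing balls by half balls throughout. First I would promote $T$ to a bounded operator on mean-zero $L^1$ data supported in a small half ball. Fix $x_0\in B_1^+$ and $0<r<\mu$, and set
\[
M=\big\{g\in \tilde{L}^1(B_{R_0}^+)^k:\operatorname{supp}g\subset B_r^+(x_0)\cap B_1^+\big\}.
\]
Given $g\in M$, one approximates it in $L^1(B_{R_0}^+)$ by a sequence $\{g_n\}\subset \tilde{L}^2(B_{R_0}^+)^k$ with $\operatorname{supp}g_n\subset B_r^+(x_0)\cap B_1^+$ (for instance by mollifying, subtracting the average, and restricting the support). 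By the hypothesis of the lemma, $\{Tg_n\}$ is then Cauchy in $L^1(B_1^+\setminus B_{cr}(x_0))^k$; its limit, denoted $Tg$, is independent of the approximating sequence and still satisfies the assumed inequality. Thus $T:M\to L^1(B_1^+\setminus B_{cr}(x_0))^k$ is bounded with the same constant $C$. This step is the exact analog of the extension argument in the proof of Lemma \ref{170823@lem1} and presents no new difficulty.

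With this extension available, the weak type-$(1,1)$ bound follows from the Calder\'on--Zygmund argument in the proof of \cite[Lemma 4.1]{MR3747493}, carried out now with $\Omega=B_1^+$ in place of $\Omega=B_1$. For an arbitrary $f\in L^1(B_1^+)^k$ and $t>0$, one performs a stopping-time (Whitney-type) decomposition of $B_1^+$ at height $t$ adapted to Lebesgue measure on the Lipschitz domain $B_1^+$ (which is doubling), writing $f=h+\sum_j b_j$ where $\|h\|_{L^2(B_1^+)}^2\lesssim t\,\|f\|_{L^1(B_1^+)}$ and each $b_j$ has mean zero, is supported in a half ball $B_{r_j}^+(x_{0,j})\cap B_1^+$ with $r_j<\mu$, and $\sum_j\|b_j\|_{L^1}\lesssim \|f\|_{L^1(B_1^+)}$. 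Chebyshev's inequality together with the $L^2$-boundedness of $T$ controls $\{|Th|>t/2\}$; discarding the enlarged balls $B_{cr_j}(x_{0,j})$ costs a set of measure $\lesssim \sum_j|B_{r_j}^+(x_{0,j})|\lesssim t^{-1}\|f\|_{L^1}$; and on the complement the hypothesis applied to each $b_j$ gives $\int_{B_1^+\setminus B_{cr_j}(x_{0,j})}|Tb_j|\le C\int|b_j|$, so summing in $j$ yields the remaining $t^{-1}\|f\|_{L^1}$ bound. Adding the two contributions gives $\big|\{x\in B_1^+:|Tf(x)|>t\}\big|\lesssim_{d,k,\mu,c,C}t^{-1}\|f\|_{L^1(B_1^+)}$.

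The only point that differs from Lemma \ref{170823@lem1} is bookkeeping: one must take the stopping-time cubes small enough that their radii stay below $\mu$, and one must check that the half balls $B_{r_j}^+(x_{0,j})$ genuinely fit the role of the sets appearing in the hypothesis. Both are handled exactly as in \cite{MR3747493}, since $B_1^+$ admits a Whitney decomposition and the ambient measure is doubling with all constants depending only on $d$. I expect this verification to be the only (minor) obstacle; as in the proof of Lemma \ref{170823@lem1}, once it is granted the remaining details can be omitted.
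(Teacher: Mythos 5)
Your proposal is correct and follows essentially the same route as the paper, which simply refers back to the proof of Lemma~\ref{170823@lem1}: first extend $T$ by $L^1$-density to mean-zero $L^1$ data supported in a small half ball, then invoke the Calder\'on--Zygmund decomposition argument of \cite[Lemma~4.1]{MR3747493} with $\Omega=B_1^+$ in place of $\Omega=B_1$. The points you flag (radii below $\mu$, half balls fitting the hypothesis, doubling of $B_1^+$) are precisely the routine verifications the paper leaves implicit.
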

\begin{proof}
See the proof of Lemma \ref{170823@lem1}.
\end{proof}

\begin{lemma}		\label{171110@lem2}
Let $(u,p)\in W^{1,2}_0(B_4^+)^d\times \tilde{L}^2(B_4^+)$ be the weak solution of
$$
\left\{
\begin{aligned}
\cL_0 u+\nabla p=D_\alpha f_\alpha &\quad \text{in }\, B_4^+,\\
\operatorname{div} u=g-(g)_{B_4^+} &\quad \text{in }\, B^+_4,
\end{aligned}
\right.
$$
where $f_\alpha\in L^2(B_4^+)^d$ and $g\in L^2(B_4^+)$ are supported in $B_1^+$.
Then for any $t>0$, we have
$$
\big|\{x\in B_1^+:|Du(x)|+|p(x)|>t\}\big|\lesssim_{d,\lambda} \frac{1}{t}\int_{B_1^+} |f_\alpha|+|g|\,dx.
$$	
\end{lemma}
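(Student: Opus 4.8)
\emph{Proof plan.} The plan is to mirror the proof of Lemma~\ref{170904@lem1} almost line by line, replacing the interior preliminary estimates by the half-ball counterparts proved above. First I would perform the same algebraic reduction: with $\cA=(\bar A^{11})^{-1}$, define the linear map $F$ on $(L^1\cap L^2)(B_4^+)^{d\times d}\times(L^1\cap L^2)(B_4^+)$ by $F(f_1,\dots,f_d,g)=(\tilde f_1,\dots,\tilde f_d,\tilde g)$, where
\[
\tilde g=(\cA_{11})^{-1}(\cA f_1\cdot e_1-g),\qquad \tilde f_1=\cA(f_1-\tilde g e_1),\qquad \tilde f_\alpha=f_\alpha-\bar A^{\alpha1}\tilde f_1\ \ (\alpha\ge2).
\]
As in Lemma~\ref{170904@lem1}, $F$ is bounded and invertible on both $L^1(B_4^+)^{d\times d}\times L^1(B_4^+)$ and $L^2(B_4^+)^{d\times d}\times L^2(B_4^+)$, with the same explicit inverse. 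Then I would define the bounded linear operator $T$ on $L^2(B_4^+)^{d\times d}\times L^2(B_4^+)$ by $T(\tilde f_1,\dots,\tilde f_d,\tilde g)=(D_1u,\dots,D_du,p)$, where $(u,p)\in W^{1,2}_0(B_4^+)^d\times\tilde L^2(B_4^+)$ is the weak solution of the system in the statement with data $F^{-1}(\tilde f_1,\dots,\tilde f_d,\tilde g)$; boundedness of $T$ is the energy estimate for that system. Since the map in the statement equals $T\circ F$ and $F$ is bounded on $L^1$, it suffices to check that $T$ satisfies the hypothesis of Lemma~\ref{171110@lem1} with $\mu=1/2$ and $c=2$.

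To verify this, fix $x_0\in B_1^+$ and $0<r<1/2$, let $\tilde f_\alpha\in\tilde L^2(B_4^+)^d$ and $\tilde g\in\tilde L^2(B_4^+)$ be supported in $B_r^+(x_0)$, and let $(u,p)$ be the corresponding solution; after the substitution it satisfies $\operatorname{div}u=\tilde f^1_1$ in $B_4^+$ together with the half-ball analogue of the weak formulation \eqref{170904@eq2}. For $R\in[2r,2)$ with $B_1^+\setminus B_R(x_0)\neq\emptyset$ and test functions $\phi_\alpha,\psi$ supported in $(B_{2R}(x_0)\setminus B_R(x_0))\cap B_1^+$, I would solve the adjoint problem $\cL_0^*v+\nabla\pi=D_\alpha\phi_\alpha$, $\operatorname{div}v=\psi-(\psi)_{B_4^+}$ in $B_4^+$ with $v=0$ on $B_4\cap\partial\bR^d_+$, together with the energy bound $\|Dv\|_{L^2(B_4^+)}\lesssim\||\phi_\alpha|+|\psi|\|_{L^2((B_{2R}(x_0)\setminus B_R(x_0))\cap B_1^+)}$. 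Testing the two systems against each other and using that $\tilde f_\alpha,\tilde g$ are mean-zero over $B_4^+$ and supported in $B_r^+(x_0)$ produces, exactly as in \eqref{170904@eq8}, an identity for $\int(D_\alpha u\cdot\phi_\alpha+p\psi)\,dx$ in terms of $\tilde f_\alpha\cdot(D_\alpha v-(D_\alpha v)_{B_r^+(x_0)})$, $\tilde f_1\cdot(V-(V)_{B_r^+(x_0)})$, and $\tilde g\,(D_1v^1-(D_1v^1)_{B_r^+(x_0)})$ integrated over $B_r^+(x_0)$, where $V=\bar A^{1\beta}_*D_\beta v+\pi e_1$. Since $\phi_\alpha,\psi$ vanish on $B_R(x_0)$, the pair $(v,\pi)$ solves the homogeneous adjoint Stokes system in $B_R^+(x_0)$ with $\operatorname{div}v=-(\psi)_{B_4^+}$ and $v=0$ on $B_R(x_0)\cap\partial\bR^d_+$, so Lemma~\ref{171109@lem2} applied at $z=x_0$ gives $[V]_{C^{0,1}(B_r^+(x_0))}+[D_{x'}v]_{C^{0,1}(B_r^+(x_0))}\lesssim R^{-d/2-1}\|Dv\|_{L^2(B_R^+(x_0))}$, and the relation $D_1v^1=-\sum_{i\ge2}D_iv^i-(\psi)_{B_4^+}$ bounds $[D_1v^1]_{C^{0,1}(B_r^+(x_0))}$ by the same quantity.

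Combining these bounds with the energy estimate, precisely as in the derivation of \eqref{171108@eq1}, yields $\bigl|\int_{(B_{2R}(x_0)\setminus B_R(x_0))\cap B_1^+}(D_\alpha u\cdot\phi_\alpha+p\psi)\,dx\bigr|\lesssim rR^{-d/2-1}M\,\||\phi_\alpha|+|\psi|\|_{L^2((B_{2R}(x_0)\setminus B_R(x_0))\cap B_1^+)}$ with $M=\int_{B_r^+(x_0)}(|\tilde f_\alpha|+|\tilde g|)\,dx$; duality and H\"older's inequality then give $\int_{(B_{2R}(x_0)\setminus B_R(x_0))\cap B_1^+}(|Du|+|p|)\,dx\lesssim(r/R)M$. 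Summing over $R=2^ir$ for $i=1,\dots,N-1$, where $N$ is minimal with $B_1^+\subset B_{2^Nr}(x_0)$, gives $\int_{B_1^+\setminus B_{2r}(x_0)}(|Du|+|p|)\,dx\lesssim M$, so $T$ satisfies the hypothesis of Lemma~\ref{171110@lem1}; since $M\lesssim\|F(f_1,\dots,f_d,g)\|_{L^1}\lesssim\int_{B_1^+}(|f_\alpha|+|g|)\,dx$ by boundedness of $F$ on $L^1$, the asserted weak type-$(1,1)$ bound for $(Du,p)$ follows. The only point requiring care beyond transcription is supplying the half-ball solvability and energy estimate for the adjoint Stokes system with the Dirichlet condition on the flat part of the boundary — the analogue of \cite[Lemma 3.2]{MR3693868} — and noting that Lemma~\ref{171109@lem2} does apply for every position of $x_0\in B_1^+$ and every $R$ relative to $\partial\bR^d_+$, precisely because that lemma permits balls centered at an arbitrary $z\in\bR^d_+$.
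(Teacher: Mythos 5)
Your proposal is correct and follows exactly the route the paper indicates: the paper's proof of this lemma is simply the remark that one repeats the argument of Lemma~\ref{170904@lem1} verbatim, replacing \eqref{170908@eq1a} by \eqref{171109@eq3d} (Lemma~\ref{171109@lem2}) and Lemma~\ref{170823@lem1} by Lemma~\ref{171110@lem1}, and your write-up supplies precisely those substitutions together with the vanishing trace of $v$ on the flat boundary needed to invoke Lemma~\ref{171109@lem2}. The points you flag as requiring care (the half-ball adjoint solvability via the analogue of \cite[Lemma 3.2]{MR3693868}, and the applicability of Lemma~\ref{171109@lem2} for an arbitrary center $z\in\bR^d_+$) are the same ones the paper relies on implicitly, so there is no gap.
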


\begin{proof}
The proof of the lemma is nearly the same as that of Lemma \ref{170904@lem1}, by using \eqref{171109@eq3d} and Lemma \ref{171110@lem1} instead of \eqref{170908@eq1a} and Lemma \ref{170823@lem1}.
We omit the details.
\end{proof}

\section{Proof of Theorems \ref{MT1}, \ref{MT2}, \ref{MT3}, and \ref{MT4}}		\label{Sec7}

\subsection{Proof of Theorem \ref{MT1}}	
\label{171122@sec1}
We shall derive a priori estimates for $(u,p)$ under the assumption that $A^{\alpha\beta}$, $f_\alpha$, and $g$ are sufficiently smooth, so that $(u, p)\in C^1(\overline{B_4^+})^d\times C(\overline{B_4^+})$.

Throughout this proof, we set $q=1/2$,
$$
\Phi(x_0,r):=\inf_{\substack{\theta\in \bR^d \\ \Theta\in \bR^{d\times (d-1)}}} \bigg(\dashint_{B_r^+(x_0)} |\hat{U}-\theta|^q+|D_{x'}u-\Theta|^q\,dx\bigg)^{1/q},
$$
and
$$
\Psi(x_0,r):=\inf_{\theta\in \bR^d} \bigg(\dashint_{B_r^+(x_0)} |\hat{U}-\theta|^q+|D_{x'}u|\,dx\bigg)^{1/q},
$$
where $\hat{U}=A^{1\beta}D_\beta u+pe_1-f_1$.
For $\gamma\in (0,1)$, $\kappa\in (0,1/2]$, $m\in \{3,4\}$, and $f\in L^1(B_6^+)$, we denote
\begin{equation}		\label{171118@eq5}
\begin{aligned}
\omega_{f,x',B_m^+}(r)&:=\sup_{x\in B_m^+}\dashint_{B_r^+(x)}\bigg|f(y)-\dashint_{B_r'(x')}f(y_1,z')\,dz'\bigg|\,dy,\\
\tilde{\omega}_{f,x',B_m^+}(r)&:=\sum_{i=1}^\infty \kappa^{\gamma i} \big(\omega_{f,x',B_m^+}(\kappa^{-i} r )[\kappa^{-i}r<1]+\omega_{f,x',B_m^+}(1)[\kappa^{-i} r \ge 1]\big),\\
\omega_{f,x',B_3^+}^\sharp(r)&:=\sup_{r\le R \le 1}\left(\frac{r}{R}\right)^{\gamma}\tilde{\omega}_{f,x',B_3^+}(R).
\end{aligned}
\end{equation}

The following lemma is analogous to Lemma \ref{171020@lem1z}.

\begin{lemma}		\label{171115@lem1}
Let $\gamma\in (0,1)$.
Under the same hypothesis of Theorem \ref{MT1} $(a)$, there exists a constant $\kappa\in (0,1/2]$ depending only on $d$, $\lambda$, and $\gamma$, such that
the following hold.
\begin{enumerate}[$(i)$]
\item
For any $x_0\in B_3\cap \partial \bR^d_+$ and $0<r\le 1/4$, we have
$$
\begin{aligned}
\sum_{j=0}^\infty \Psi(x_0,\kappa^j r)&\lesssim_{d,\lambda, \gamma} \Psi(x_0,r)+\|Du\|_{L^\infty(B_r^+(x_0))}\int_0^r \frac{\tilde{\omega}_{A^{\alpha\beta},x', B_4^+}(t)}{t}\,dt\\
&\quad +\int_0^r \frac{\tilde{\omega}_{f_\alpha,x',B_4^+}(t)+\tilde{\omega}_{g,x',B_4^+}(t)}{t}\,dt.
\end{aligned}
$$
\item
For any $x_0\in B_3\cap \partial \bR^d_+$ and $0<\rho\le r\le 1/4$, we have
$$
\begin{aligned}
\Psi(x_0, \rho) &\lesssim_{d,\lambda, \gamma} \left(\frac{\rho}{r}\right)^{\gamma} \Psi(x_0, r)+\|Du\|_{L^\infty(B_r^+(x_0))}\tilde{\omega}_{A^{\alpha\beta},x',B_3^+}(\rho)\\
&\quad +\tilde{\omega}_{f_\alpha,x',B_3^+}(\rho)+\tilde{\omega}_{g,x',B_3^+}(\rho).
\end{aligned}
$$
\item
For any $x_0\in B_3^+$ and $0<r\le 1/8$, we have
$$
\begin{aligned}
&\sum_{j=0}^\infty \Phi(x_0, \kappa^j r)\lesssim_{d,\lambda,\gamma} r^{-d}\Big(\|\hat{U}\|_{L^1(B_{3r}^+(x_0))}+\|D_{x'}u\|_{L^1(B_{3r}^+(x_0))}\Big)\\
&\quad +\|Du\|_{L^\infty(B_{3r}^+(x_0))}\int_0^r \frac{\omega^\sharp_{A^{\alpha\beta},x',B_3^+}(t)}{t}\,dt+\int_0^r \frac{\omega^\sharp_{f_\alpha,x',B_3^+}(t)+\omega^\sharp_{g,x',B_3^+}(t)}{t}\,dt.
\end{aligned}
$$
\item
For any $x_0\in B_3^+$ and $0<\rho\le r\le 1/8$, we have
\begin{equation}		\label{171115@eq1}
\begin{aligned}
\Phi(x_0, \rho)&\lesssim_{d,\lambda,\gamma} \left(\frac{\rho}{r}\right)^{\gamma} r^{-d} \Big(\|\hat{U}\|_{L^1(B_{3r}^+(x_0))}+\|D_{x'}u\|_{L^1(B_{3r}^+(x_0))}\Big)\\
&\quad +\|Du\|_{L^\infty(B_{3r}^+(x_0))} {\omega}^\sharp_{A^{\alpha\beta},x',B_3^+}(\rho)+ {\omega}^\sharp_{f_\alpha,x',B_3^+}(\rho)+ {\omega}^\sharp_{g,x',B_3^+}(\rho).
\end{aligned}
\end{equation}
\end{enumerate}
In the above, each integration is finite; see Lemma \ref{171118@lem5}.
\end{lemma}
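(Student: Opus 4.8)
The plan is to mimic the proof of Lemma~\ref{171020@lem1z}, replacing the interior ingredients of Section~\ref{Sec3} by the Dirichlet half-ball analogues of Section~\ref{Sec6}. I will first establish the boundary statements $(i)$ and $(ii)$ for $x_0\in B_3\cap\partial\bR^d_+$, and then deduce the interior statements $(iii)$ and $(iv)$ for $x_0\in B_3^+$ by comparing a half ball around $x_0$ with one centered at the boundary projection $\hat x_0=(0,x_0')$.

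\emph{Boundary points.} Fix $x_0=(0,x_0')\in B_3\cap\partial\bR^d_+$ and $0<r\le1/4$. As in the interior argument, put $\bar f(x_1):=\dashint_{B_r'(x_0')}f(x_1,y')\,dy'$, let $\cL_0u=D_\alpha(\bar A^{\alpha\beta}(x_1)D_\beta u)$, and pick one-variable functions $u_0(x_1),p_0(x_1)$ with $u_0^1=\bar g$ and $\bar A^{11}u_0+p_0e_1=\bar f_1$. Then $u_e=u-\int_{-6}^{x_1}u_0\,dy_1$, $p_e=p-p_0$ solves $\cL_0u_e+\nabla p_e=D_\alpha F_\alpha$, $\operatorname{div}u_e=G$ in $B_6^+$ with $F_\alpha=(\bar A^{\alpha\beta}-A^{\alpha\beta})D_\beta u+f_\alpha-\bar f_\alpha$, $G=g-\bar g$, and, since the correction depends only on $x_1$, $u_e=0$ on $B_6\cap\partial\bR^d_+$ and $D_{x'}u_e=D_{x'}u$. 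Decompose $(u_e,p_e)=(w,p_1)+(v,p_2)$ with $(w,p_1)\in W^{1,2}_0(B_{4r}^+(x_0))^d\times\tilde L^2(B_{4r}^+(x_0))$ solving the problem with data $I_{B_r^+(x_0)}F_\alpha$ and $I_{B_r^+(x_0)}G-(I_{B_r^+(x_0)}G)_{B_{4r}^+(x_0)}$. The rescaled weak type-$(1,1)$ bound of Lemma~\ref{171110@lem2} gives, exactly as in \eqref{170913@eq2},
\[
\Big(\dashint_{B_r^+(x_0)}(|Dw|+|p_1|)^q\,dx\Big)^{1/q}\lesssim\dashint_{B_r^+(x_0)}|F_\alpha|+|G|\,dx,
\]
while $(v,p_2)$ satisfies \eqref{171109@eq3} in $B_r^+(x_0)$, so Lemma~\ref{171109@lem3} applies. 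Using $|\hat U-U_e|\lesssim|A^{1\beta}-\bar A^{1\beta}|\,|Du|+|f_1-\bar f_1|$ one gets the one-step inequality $\Psi(x_0,\kappa r)\le C_0\kappa\,\Psi(x_0,r)+C_0\kappa^{-d/q}\|Du\|_{L^\infty(B_r^+(x_0))}\,\omega_{A^{\alpha\beta},x',B_3^+}(r)+C_0\kappa^{-d/q}\bigl(\omega_{f_\alpha,x',B_3^+}(r)+\omega_{g,x',B_3^+}(r)\bigr)$. Choosing $\kappa=\kappa(d,\lambda,\gamma)$ with $C_0\kappa^{1-\gamma}\le1$, iterating, and summing precisely as in Lemma~\ref{171020@lem1z} yields $(i)$ and $(ii)$; the integrals are finite by Lemma~\ref{171118@lem5}.

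\emph{Interior points.} Fix $x_0\in B_3^+$, write $d_0:=x_{01}=\operatorname{dist}(x_0,\partial\bR^d_+)$, and let $\hat x_0=(0,x_0')$. For $0<\rho\le r\le1/8$ we split into cases according to whether $d_0\gtrsim r$, $\rho\lesssim d_0\lesssim r$, or $d_0\lesssim\rho$. If $d_0\gtrsim r$, the half ball $B_{3r}^+(x_0)$ is a genuine interior ball and the interior Lemma~\ref{171020@lem1z} (whose proof only sees the coefficients on balls contained in $\bR^d_+$, where the interior moduli are dominated by the half-ball ones) gives the claimed decay for $\Phi$. If $\rho\lesssim d_0\lesssim r$, we run the interior estimate from scale $r$ down to scale $\sim d_0$, and then, since $B_{3\rho}^+(x_0)$ sits inside a half ball of radius $\sim d_0$ about $\hat x_0$ which in turn sits inside $B_{3r}^+(x_0)$, we pass to the boundary point $\hat x_0$ and invoke $(i)$, $(ii)$ at scales between $d_0$ and $r$; since $D_{x'}u(\hat x_0)=0$, the constant $\Theta$ in $\Phi$ may be dropped there at the cost of $[D_{x'}u]_{C^{0,1}}\rho$ (controlled by Lemma~\ref{171109@lem2}), turning $\Phi$ into $\Psi$. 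If $d_0\lesssim\rho$, then $B_{3\rho}^+(x_0)$ lies in a half ball of radius $\sim\rho$ about $\hat x_0$ and we reduce directly to $(i)$, $(ii)$. In every such passage, going from a ball of radius $\rho$ about $x_0$ to a boundary half ball of radius $R\in[\rho,r]$ costs a factor $(\rho/R)^\gamma\,\tilde\omega_{\bullet,x',B_3^+}(R)$; taking the worst $R\in[\rho,1]$ is exactly $\omega^\sharp_{\bullet,x',B_3^+}(\rho)$, which is why that quantity appears on the right of \eqref{171115@eq1} and of $(iii)$. For the sum in $(iii)$, the indices $j$ with $\kappa^jr\ge d_0$ contribute a geometric series dominated by the boundary sum, and the tail $\kappa^jr<d_0$ is handled once by Lemma~\ref{171020@lem1z} at scale $\sim d_0$; the trivial bound $\Phi(x_0,r)\lesssim r^{-d}\bigl(\|\hat U\|_{L^1(B_{3r}^+(x_0))}+\|D_{x'}u\|_{L^1(B_{3r}^+(x_0))}\bigr)$ supplies the $L^1$-terms.

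\emph{Main obstacle.} The delicate part is the interior case near the boundary: one must keep the two length scales $\rho$ (or $\kappa^jr$) and $d_0$ under simultaneous control, verify that switching between the interior lemma and the boundary parts $(i)$, $(ii)$ introduces only the $\omega^\sharp$-type moduli (and no spurious negative powers of $d_0$), and handle $D_{x'}u$—which is subtracted by a constant in $\Phi$ but not in $\Psi$—via its vanishing on $\partial\bR^d_+$ together with the Lipschitz bound of Lemma~\ref{171109@lem2}. The remaining bookkeeping, including the finiteness of all the integrals, is a line-by-line transcription of the interior proof and of Lemma~\ref{171118@lem5}.
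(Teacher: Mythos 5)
Your proposal follows essentially the same route as the paper's proof: parts $(i)$ and $(ii)$ are a line-by-line transcription of Lemma~\ref{171020@lem1z} using the half-ball freezing, the boundary weak type-$(1,1)$ estimate (Lemma~\ref{171110@lem2}), and the boundary $L^q$ mean-oscillation estimate (Lemma~\ref{171109@lem3}); and for $(iii)$, $(iv)$ one splits $x_0\in B_3^+$ into three regimes comparing $x_{01}$ against $\rho$ and $r$, passing to the boundary projection $\bar x_0=(0,x_0')$ in the near-boundary cases.

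Two small corrections to your writeup. First, the passage $\Phi(x_0,R)\lesssim\Psi(\bar x_0,5R)$ requires no Lipschitz control on $D_{x'}u$: since $\Phi$ infimizes over $\Theta$, taking $\Theta=0$ and comparing $|B_R^+(x_0)|\sim|B_{5R}^+(\bar x_0)|$ already gives it, and $\Psi$ carries $|D_{x'}u|^q$ unsubtracted precisely because $D_{x'}u$ vanishes on $\partial\bR^d_+$. Second, in the middle regime $4\rho\le x_{01}\le r$ the interior one-step decay is run from the small scale $\rho$ \emph{up} to $R=x_{01}/4$ (where $B_{4R}(x_0)\subset\bR^d_+$), and the boundary estimate $(ii)$ then handles scales from $5R$ up to $5r/4$ centered at $\bar x_0$; your phrase ``interior estimate from scale $r$ down to $\sim d_0$'' has this backwards and would not make sense since $B_{4r}(x_0)\not\subset\bR^d_+$ there. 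Finally, the paper derives $(iii)$ simply by setting $\rho=\kappa^j r$ in $(iv)$ and summing with Lemma~\ref{171118@lem5}(c), which is a bit cleaner than your split of the sum at $\kappa^j r\sim x_{01}$, though the two are equivalent.
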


\begin{proof}
Using Lemmas \ref{171109@lem3} and \ref{171110@lem2}, and following the proof of Lemma \ref{171020@lem1z}, one can easily check that the estimates in $(i)$ and $(ii)$ hold.
The assertion $(iii)$ is an easy consequence of the assertion $(iv)$.
Indeed, for $j\in \{0,1,2,\ldots\}$, by taking $\rho=\kappa^j r$ in \eqref{171115@eq1}, we have
$$
\begin{aligned}
&\Phi(\kappa^j r)\lesssim \kappa^{\gamma j} r^{-d}\Big(\|\hat{U}\|_{L^1(B_{3r}^+(x_0))}+\|D_{x'}u\|_{L^1(B_{3r}^+(x_0))}\Big)\\
&\quad +\|Du\|_{L^\infty(B_{3r}^+(x_0))}\omega^{\sharp}_{A^{\alpha\beta},x',B_3^+}(\kappa^j r)+\omega^\sharp_{f_\alpha,x',B_3^+}(\kappa^j r)+\omega^\sharp_{g,x',B_3^+}(\kappa^j r).
\end{aligned}
$$
Taking the summations of both sides of the above inequality with respect to $j=0,1,2,\ldots$, and using Lemma \ref{171118@lem5} $(c)$, we see that the estimate in $(iii)$ holds.

It remains to show that the assertion $(iv)$ holds.
We choose $\kappa=\kappa(d,\lambda,\gamma)\in (0,1/2]$ a sufficiently small, such that Lemma \ref{171020@lem1z} and the assertions $(i)$ and $(ii)$ in the lemma are available.
If $r/6<\rho\le r$, then \eqref{171115@eq1} follows from the definition of $\Phi$.
If $0<\rho\le r/6$, then we consider the following three cases:
$$
r\le x_{01}, \quad x_{01}\le 4\rho , \quad 4\rho \le  x_{01}\le  r.
$$
\begin{enumerate}[i.]
\item
$r\le x_{01}$:
Set $R=r/4$.
Since $B_{4R}(x_0)\subset B_6^+$,
by following the proof of  \eqref{171023@1a}, we have
$$
\begin{aligned}
\Phi(x_0,\rho)&\lesssim \left(\frac{\rho}{R}\right)^{\gamma} \Phi(x_0, R)
+\|Du\|_{L^\infty(B_R(x_0))}\tilde{\omega}_{A^{\alpha\beta},x',B_3^+}(\rho)\\
&\quad +\tilde{\omega}_{f_\alpha,x',B_3^+}(\rho)+\tilde{\omega}_{g,x',B_3^+}(\rho).
\end{aligned}
$$
Thus using the fact that
$$
\tilde{\omega}_{\bullet,x',B_3^+}\le \omega^\sharp_{\bullet,x',B_3^+}, \quad \Phi(x_0,R)\lesssim R^{-d} \Big(\|\hat{U}\|_{L^1(B_R^+(x_0))}+\|D_{x'}u\|_{L^1(B_R^+(x_0))}\Big),
$$
we get \eqref{171115@eq1}.
\item
$x_{01}\le 4\rho$:
Set $\bar{x}_0=(0,x_0')\in B_3\cap \bR^d_+$.
Since $B_\rho^+(x_0)\subset B_{5\rho}^+(\bar{x}_0)$, we have
\begin{align}	
\nonumber	
\Phi(x_0, \rho)&\lesssim \Psi(\bar{x}_0, 5\rho)\\
\nonumber	
&\lesssim \left(\frac{\rho}{r}\right)^\gamma \Psi(\bar{x}_0, r) + \|Du\|_{L^\infty(B_{r}^+(\bar{x}_0))}\tilde{\omega}_{A^{\alpha\beta},x', B_3^+}(5\rho)\\
\label{171128@eq2}
&\quad +\tilde{\omega}_{f_\alpha,x', B_3^+}(5\rho)+\tilde{\omega}_{g,x',B_3^+}(5\rho),
\end{align}
where we used the assertion $(ii)$ of the lemma in the second inequality.
Since it holds that
$$
B_r^+(\bar{x}_0)\subset B_{2r}^+(x_0), \quad \tilde{\omega}_{\bullet,x',B_3^+}(5\rho)\lesssim \omega^\sharp_{\bullet,x',B_3^+}(\rho),
$$
$$
\Psi(\bar{x}_0,r)\lesssim r^{-d}\Big(\|\hat{U}\|_{L^1(B_{2r}^+(x_0))}+\|D_{x'}u\|_{L^1(B_{2r}^+(x_0))}\Big),
$$
we get \eqref{171115@eq1} from \eqref{171128@eq2}.
\item
$4\rho\le x_{01}\le r$:
Set $R=x_{01}/4$, and observe that
$$
5R\le 5r/4 \le 1/4.
$$
Since $B_{4R}(x_0)\subset B_6^+$, by following the proof of \eqref{171023@1a}, we have
\begin{equation}		\label{171118@eq4a}
\begin{aligned}
\Phi(x_0,\rho)&\lesssim \left(\frac{\rho}{R}\right)^{\gamma} \Phi(x_0, R)
+\|Du\|_{L^\infty(B_R(x_0))}\tilde{\omega}_{A^{\alpha\beta},x',B_3^+}(\rho)\\
&\quad +\tilde{\omega}_{f_\alpha,x',B_3^+}(\rho)+\tilde{\omega}_{g,x',B_3^+}(\rho).
\end{aligned}
\end{equation}
Set $\bar{x}_0=(0,x_0')\in B_3\cap \bR^d_+$.
Then similar to \eqref{171128@eq2}, we have
\begin{align}
\nonumber		
\Phi(x_0,R)&\lesssim \Psi(\bar{x}_0,5R)\\
\nonumber		
&\lesssim \left(\frac{R}{r}\right)^{\gamma} \Psi(\bar{x}_0,5r/4)+\|Du\|_{L^\infty(B_{5r/4}^+(\bar{x}_0))}\tilde{\omega}_{A^{\alpha\beta},x',B_3^+}(5R)\\
\label{171128@eq3}
&\quad + \tilde{\omega}_{f_\alpha,x',B_3^+}(5R)+\tilde{\omega}_{g,x',B_3^+}(5R).
\end{align}
Combining \eqref{171118@eq4a} and \eqref{171128@eq3}, and using the fact that
$$
B_{5r/4}^+(\bar{x}_0)\subset B_{3r}^+(x_0), \quad \left(\frac{\rho}{R}\right)^\gamma \tilde{\omega}_{\bullet,x',B_3^+}(5R)\lesssim  \omega^\sharp_{\bullet,x',B_3^+}(\rho),
$$
$$
\Psi(\bar{x}_0,5r/4)\lesssim r^{-d}\Big(\|\hat{U}\|_{L^1(B_{3r}^+(x_0))}+\|D_{x'}u\|_{L^1(B_{3r}^+(x_0))}\Big),
$$
we get \eqref{171115@eq1}.
\end{enumerate}
The lemma is proved.
\end{proof}

Now we are ready to prove the assertion $(a)$ in the theorem.

\begin{proof}[Proof of Theorem \ref{MT1} $(a)$]
In this proof, we fix $\gamma\in (0,1)$, and let $\kappa=\kappa(d,\lambda,\gamma)$ be the constant from Lemma \ref{171115@lem1}.
We denote
$$
\omega_{\bullet,x'}=\omega_{\bullet,x',B_4^+}, \quad \tilde{\omega}_{\bullet,x'}=\tilde{\omega}_{\bullet,x',B_4^+}, \quad \omega_{\bullet,x'}^\sharp=\omega^\sharp_{\bullet,x',B_3^+}, \quad \cU=|\hat{U}|+|D_{x'}u|,
$$
$$
\cF(r)=\int_0^r \frac{\tilde{\omega}_{f_\alpha,x'}(t)+\tilde{\omega}_{g,x'}(t)}{t}\,dt, \quad \cG(r)=\int_0^r \frac{\omega^\sharp_{f_\alpha,x'}(t)+\omega^\sharp_{g,x'}(t)}{t}\,dt.
$$
By using Lemma \ref{171115@lem1} $(iii)$ and following the same steps as in the proof of \eqref{170925_eq1},
we get the $L^\infty$-estimate for $Du$ and $p$:
\begin{equation}		\label{171120@eq5}	
\begin{aligned}
&\|Du\|_{L^\infty(B_2^+)}+\|p\|_{L^\infty(B_2^+)}\\
&\le C \big(\|Du\|_{L^1(B_6^+)}+\|p\|_{L^1(B_6^+)}+\|f_1\|_{L^\infty(B_6^+)}+\|g\|_{L^\infty(B_{6}^+)}+\cG(1)\big),
\end{aligned}
\end{equation}
where $C=C(d,\lambda, \gamma, \omega_{A^{\alpha\beta},x'})$.
Similar to Remark \ref{180305@rmk1}, the parameter $\omega_{A^{\alpha\beta},x'}$ in the dependency of the constant $C$ in \eqref{171120@eq5} can be replaced by a function $\omega_0:(0,1]\to [0,\infty)$ such that
$$
\int_0^r \frac{\omega^\sharp_{A^{\alpha\beta},x'}(t)}{t}\,dt\le \int_0^r \frac{\omega_0(t)}{t}\,dt<\infty \quad \text{for all }\, r\in (0,1].
$$

To derive the estimates of the modulus of continuity of $\hat{U}$ and $D_{x'}u$,
we claim that for any $x_0\in B_1^+$ and $0<\rho\le 1/20$,
\begin{equation}		\label{171120@A1}
\begin{aligned}
&\sum_{j=0}^\infty \Phi(x_0, \kappa^j \rho) \lesssim_{d,\lambda,\gamma} \rho^{\gamma}\|\cU\|_{L^1(B_6^+)}\\
&\quad +\|Du\|_{L^\infty(B_2^+)}\int_0^{5\rho}\frac{\tilde{\omega}_{A^{\alpha\beta},x'}(t)
+\omega^\sharp_{A^{\alpha\beta},x'}(t)}{t}\,dt+\cF(5\rho)+\cG(\rho).
\end{aligned}
\end{equation}
We consider the following two cases:
$$
4\rho\le x_{01} \quad \text{and}\quad 4\rho>x_{01}.
$$
\begin{enumerate}[i.]
\item
$4\rho\le x_{01}$:

Since $B_{4\rho}(x_0)\subset B_6^+$, by following the proof of \eqref{171023@eq9}, we have
$$
\sum_{j=0}^\infty \Phi(x_0, \kappa^j \rho) \lesssim  \Phi(x_0,\rho)+\|Du\|_{L^\infty(B_\rho(x_0))}\int_0^\rho \frac{\tilde{\omega}_{A^{\alpha\beta},x'}(t)}{t}\,dt+\cF(\rho).
$$
Thus, applying Lemma \ref{171115@lem1} $(iv)$ with $r=1/8$ and Lemma \ref{171118@lem5} $(c)$, we get
\begin{equation}		\label{171216@A1}
\begin{aligned}
&\sum_{j=0}^\infty \Phi(x_0, \kappa^j \rho) \lesssim \rho^{\gamma}\|\cU\|_{L^1(B_6^+)}\\
&\quad +\|Du\|_{L^\infty(B_2^+)}\int_0^{\rho}\frac{\tilde{\omega}_{A^{\alpha\beta},x'}(t)
+\omega^\sharp_{A^{\alpha\beta},x'}(t)}{t}\,dt+\cF(\rho)+\cG(\rho),
\end{aligned}
\end{equation}
which gives \eqref{171120@A1}.
\item
$4\rho>x_{01}$:
Let $i_0$ be the integer such that $4\kappa^{i_0+1}\rho \le x_{01}<4\kappa^{i_0}\rho$.
Since $B_{4\kappa^{i_0+1}\rho}(x_0)\subset B_6^+$,
similar to \eqref{171216@A1}, we have
$$
\begin{aligned}
&\sum_{j=i_0+1}^\infty \Phi(x_0, \kappa^j\rho)=\sum_{j=0}^\infty \Phi(x_0, \kappa^{j+i_0+1}\rho)
\lesssim (\kappa^{i_0}\rho)^{\gamma}\|\cU\|_{L^1(B_6^+)}\\
&\quad +\|Du\|_{L^\infty(B_2^+)}\int_0^{\kappa^{i_0}\rho} \frac{\tilde{\omega}_{A^{\alpha\beta},x'}(t)+\omega^\sharp_{A^{\alpha\beta},x'}(t)}{t}\,dt +\cF(\kappa^{i_0}\rho)+\cG(\kappa^{i_0}\rho).
\end{aligned}
$$
Thus we get (using $\kappa^{i_0}\rho\le \rho$)
\begin{equation}		\label{171216@A2}
\begin{aligned}
&\sum_{j=i_0+1}^\infty \Phi(x_0, \kappa^j\rho)
\lesssim \rho^{\gamma}\|\cU\|_{L^1(B_6^+)}\\
&\quad +\|Du\|_{L^\infty(B_2^+)}\int_0^{\rho} \frac{\tilde{\omega}_{A^{\alpha\beta},x'}(t)+\omega^\sharp_{A^{\alpha\beta},x'}(t)}{t}\,dt +\cF(\rho)+\cG(\rho).
\end{aligned}
\end{equation}
Set $\bar{x}_0=(0,x'_0)\in B_1\cap \partial \bR^d_+$.
Since $B_{\kappa^j\rho}(x_0)\subset B_{5\kappa^{j}\rho}(\bar{x}_0)$ for $j\in \{0,1,\ldots,i_0\}$, we have
\begin{align*}
\sum_{j=0}^{i_0}\Phi(x_0,\kappa^j \rho)&\lesssim_d \sum_{j=0}^{i_0}\Psi(\bar{x}_0,5\kappa^j \rho)\\
& \lesssim \Psi (\bar{x}_0, 5\rho)+\|Du\|_{L^\infty(B_2^+)}\int_0^{5\rho}\frac{\tilde{\omega}_{A^{\alpha\beta},x'}(t)}{t}\,dt+\cF(5\rho),
\end{align*}
where we used Lemma \ref{171115@lem1} $(i)$ in the second inequality.
Thus, applying  Lemma \ref{171115@lem1} $(ii)$ with $r=1/4$ and using the fact that
$$
 \Psi(\bar{x}_0,1/4)\lesssim \|\cU\|_{L^1(B_6^+)}, \quad
\tilde{\omega}_{\bullet,x',B_3^+}(\rho)\lesssim \int_0^\rho \frac{\tilde{\omega}_{\bullet,x'}(t)}{t}\,dt,
$$
we have
$$
\sum_{j=0}^{i_0}\Phi(x_0, \kappa^j \rho)\lesssim \rho^\gamma \|\cU\|_{L^1(B_6^+)}+\|Du\|_{L^\infty(B_2^+)}\int_0^{5\rho}\frac{\tilde{\omega}_{A^{\alpha\beta},x'}(t)}{t}\,dt+\cF(5\rho).
$$
Combining this and \eqref{171216@A2}, we get the claim \eqref{171120@A1}.
\end{enumerate}

Now we are ready to obtain the estimates of the modulus of continuity of $\hat{U}$ and $D_{x'}u$.
For $x_0\in B_1^+$ and $0<\rho\le 1/20$, we denote $\theta_{x_0,\rho}\in \bR^d$ and $\Theta_{x_0, \rho}\in \bR^{d\times (d-1)}$ to be such that
$$
\Phi(x_0, \rho)=\bigg(\dashint_{B_\rho^+(x_0)}|\hat{U}-\theta_{x_0,\rho}|^q+|D_{x'}u-\Theta_{x_0, \rho}|^q\,dx\bigg)^{1/q}.
$$
Then similar to \eqref{171120@eq2}, we have
\begin{equation}		\label{171121@eq1a}
|\hat{U}(x_0)-\theta_{x_0,\rho}|+|D_{x'}u(x_0)-\Theta_{x_0,\rho}|\lesssim \sum_{j=0}^\infty \Phi(x_0, \kappa^j \rho).
\end{equation}
Let $x,y\in B_1^+$ with $\rho:=|x-y|\le 1/20$.
For any $z\in B_\rho^+(x)\cap B_\rho^+(y)$, we have
\begin{align*}
|\hat{U}(x)-\hat{U}(y)|^q &\le |\hat{U}(x)-\theta_{x,\rho}|^q+|\theta_{x,\rho}-\theta_{y,\rho}|^q+|\hat{U}(y)-\theta_{y,\rho}|^q\\
&\le 2\sup_{x_0\in B_1^+}|\hat{U}(x_0)-\theta_{x_0,\rho}|^q+|\hat{U}(z)-\theta_{x,\rho}|^q+|\hat{U}(z)-\theta_{y,\rho}|^q.
\end{align*}
By taking average over $z\in B_\rho^+(x)\cap B_\rho^+(y)$ and taking the $q$-th root, we see that
\begin{align*}
|\hat{U}(x)-\hat{U}(y)|&\lesssim \sup_{x_0\in B_1^+}|\hat{U}(x_0)-\theta_{x_0,\rho}|^q+\Phi(x,\rho)+\Phi(y,\rho)\\
&\lesssim \sup_{x_0\in B_1^+} \Bigg(\sum_{j=0}^\infty \Phi(x_0, \kappa^j \rho)+\Phi(x_0, \rho)\Bigg)\lesssim \sup_{x_0\in B_1^+}\sum_{j=0}^\infty \Phi(x_0, \kappa^j \rho),
\end{align*}
where we used \eqref{171121@eq1a} in the second inequality.
Hence we get from \eqref{171120@A1} that
$$
\begin{aligned}
&|\hat{U}(x)-\hat{U}(y)|\lesssim \rho^{\gamma}\|\cU\|_{L^1(B_6^+)}\\
&\quad +\|Du\|_{L^\infty(B_2^+)}\int_0^{5\rho}\frac{\tilde{\omega}_{A^{\alpha\beta},x'}(t)+\omega^\sharp_{A^{\alpha\beta},x'}(t)}{t}\,dt+\cF(5\rho)+\cG(\rho).
\end{aligned}
$$

Similarly, we get the same estimate for $D_{x'} u$, and thus, using \eqref{171120@eq5}, we conclude that
\begin{equation}		\label{171128@B1}
\begin{aligned}
&|\hat{U}(x)-\hat{U}(y)|+|D_{x'}u(x)-D_{x'}u(y)|\\
&\le C |x-y|^{\gamma}\big(\|Du\|_{L^1(B_6^+)}+\|p\|_{L^1(B_6^+)}+\|f_1\|_{L^1(B_6^+)}\big)\\
&\quad +C\big(\|Du\|_{L^1(B_6^+)}+\|p\|_{L^1(B_6^+)}\big)\int_0^{5|x-y|}\frac{\tilde{\omega}_{A^{\alpha\beta},x'}(t)
+\omega^\sharp_{A^{\alpha\beta},x'}(t)}{t}\,dt\\
&\quad +C\big(\|f_1\|_{L^\infty(B_6^+)}+\|g\|_{L^\infty(B_6^+)}\big)
\int_0^{5|x-y|}\frac{\tilde{\omega}_{A^{\alpha\beta},x'}(t)+\omega^\sharp_{A^{\alpha\beta},x'}(t)}{t}\,dt\\
&\quad +C\cG(1)\int_0^{5|x-y|}\frac{\tilde{\omega}_{A^{\alpha\beta},x'}(t)+\omega^\sharp_{A^{\alpha\beta},x'}(t)}{t}\,dt
+C\cF(5|x-y|)+C\cG(|x-y|)
\end{aligned}
\end{equation}
for any $x,y\in B_1^+$ with $|x-y|\le 1/20$,
where $C>0$ is a constant depending only on $d$, $\lambda$, $\gamma$, and $\omega_{A^{\alpha\beta},x'}$.
We note that
if $x,y\in B_1^+$ with $|x-y|>1/20$, then by \eqref{171120@eq5}, we have
\begin{equation}		\label{171219@eq3}
\begin{aligned}
&|\hat{U}(x)-\hat{U}(y)|+|D_{x'}u(x)-D_{x'}u(y)| \\
& \le C |x-y|^\gamma \big(\|Du\|_{L^1(B_6^+)}+\|p\|_{L^1(B_6^+)}+\|f_1\|_{L^\infty(B_6^+)}+\|g\|_{L^\infty(B_6^+)}+\cG(1)\big).
\end{aligned}
\end{equation}
The assertion $(a)$ in Theorem \ref{MT1} is proved.
\end{proof}

We now turn to the proof of the assertion $(b)$ in the theorem.

\begin{proof}[Proof of Theorem \ref{MT1} $(b)$]
In this proof, we set $\gamma=\frac{1+\gamma_0}{2}\in (0,1)$.
Let $\kappa=\kappa(d,\lambda,\gamma)$ be the constant from Lemma \ref{171115@lem1}, and recall the notation
$$
\begin{aligned}
\tilde{\omega}_{\bullet,x',B_4^+}(r)&=\sum_{i=1}^\infty \kappa^{\gamma i} \big(\omega_{\bullet,x',B_4^+}(\kappa^{-i} r )[\kappa^{-i}r<1]+\omega_{\bullet,x',B_4^+}(1)[\kappa^{-i} r \ge 1]\big),\\
\omega^\sharp_{\bullet,x',B_3^+}(r)&=\sup_{r\le R\le 1}\left(\frac{r}{R}\right)^\gamma \tilde{\omega}_{\bullet,x',B_3^+}(R).
\end{aligned}
$$
Notice from Lemma \ref{171118@lem5} $(b)$ that for any function $f$ satisfying $[f]_{C^{\gamma_0}_{x'}(B_6^+)}<\infty$, we have
$$
\tilde{\omega}_{f,x',B_4^+}(r)+\int_0^r \frac{\tilde{\omega}_{f,x',B_4^+}(t)+\omega^\sharp_{f,x',B_3^+}(t)}{t}\,dt \lesssim [f]_{C^{\gamma_0}_{x'}(B_6^+)}r^{\gamma_0}.
$$
Therefore, by \eqref{171120@eq5}, we get the following $L^\infty$-estimate for $Du$ and $p$:
$$
\begin{aligned}
&\|Du\|_{L^\infty(B_2^+)}+\|p\|_{L^\infty(B_2^+)}\lesssim_{d,\lambda,\gamma_0, [A^{\alpha\beta}]_{C^{\gamma_0}_{x'}(B_6^+)}} \|Du\|_{L^1(B_6^+)}+\|p\|_{L^1(B_6^+)}\\
&\quad +\|f_1\|_{L^\infty(B_6^+)}+\|g\|_{L^\infty(B_6^+)}+[f_\alpha]_{C^{\gamma_0}_{x'}(B_6^+)}+[g]_{C^{\gamma_0}_{x'}(B_6^+)}.
\end{aligned}
$$
Moreover, by \eqref{171128@B1} and \eqref{171219@eq3}, we obtain the following $C^{\gamma_0}$-estimate for $\hat{U}$ and $D_{x'}u$:
$$
\begin{aligned}
&[\hat{U}]_{C^{\gamma_0}(B_1^+)}+[D_{x'}u]_{C^{\gamma_0}(B_1^+)}\lesssim_{d,\lambda,\gamma_0, [A^{\alpha\beta}]_{C^{\gamma_0}_{x'}(B_6^+)}} \|Du\|_{L^1(B_6^+)}+\|p\|_{L^1(B_6^+)}\\
&\quad +\|f_1\|_{L^\infty(B_6^+)}+\|g\|_{L^\infty(B_6^+)}+[f_\alpha]_{C^{\gamma_0}_{x'}(B_6^+)}+[g]_{C^{\gamma_0}_{x'}(B_6^+)}.
\end{aligned}
$$
This completes the proof of the assertion $(b)$ in Theorem \ref{MT1} and that of Theorem \ref{MT1}.
\end{proof}

\subsection{Proof of Theorem \ref{MT2}}
\label{171122@sec2}
We shall derive a priori estimates for $(u,p)$ by assuming $(u,p)\in C^1(\overline{B_4^+})^d\times C(\overline{B_4^+})$.
We set $q=1/2$,
$$
\Phi(x_0,r):=\inf_{\substack{\theta\in \bR \\ \Theta\in \bR^{d\times d}}} \bigg(\dashint_{B_r^+(x_0)}|Du-\Theta|^q+|p-\theta|^q\,dx\bigg)^{1/q},
$$
$$
\Psi(x_0,r):=\inf_{\substack{\theta\in \bR \\ \Theta\in \bR^{d}}}\bigg(\dashint_{B_r^+(x_0)}|D_1u-\Theta|^q+|D_{x'}u|^q+|p-\theta|^q\,dx\bigg)^{1/q}.
$$
For $\gamma\in (0,1)$, $\kappa\in (0,1/2]$, $m\in \{3,4\}$, and $f\in L^1(B_6^+)$, we denote
\begin{equation}		\label{171217@eq2}
\begin{aligned}
\omega_{f,B_m^+}(r)&:=\sup_{x\in B_m^+}\dashint_{B_r^+(x)}\big|f(y)-(f)_{B_r(x)}\big|\,dy,\\
\tilde{\omega}_{f,B_m^+}(r)&:=\sum_{i=1}^\infty \kappa^{\gamma i} \big(\omega_{f,B_m^+}(\kappa^{-i} r )[\kappa^{-i}r<1]+\omega_{f,B_m^+}(1)[\kappa^{-i} r \ge 1]\big),\\
\omega_{f,B_3^+}^\sharp(r)&:=\sup_{r\le R \le 1}\left(\frac{r}{R}\right)^{\gamma}\tilde{\omega}_{f,B_3^+}(R).
\end{aligned}
\end{equation}

\begin{lemma}		\label{171121@lem1}
Let $\gamma\in (0,1)$.
Under the same hypothesis of Theorem \ref{MT2} $(a)$, there exists a constant $\kappa\in (0,1/2]$ depending only on $d$, $\lambda$, and $\gamma$, such that
the following hold.
\begin{enumerate}[$(i)$]
\item
For any $x_0\in B_3\cap \partial \bR^d_+$ and $0<r\le 1/4$, we have
$$
\begin{aligned}
\sum_{j=0}^\infty \Psi(x_0,\kappa^j r)&\lesssim_{d,\lambda, \gamma} \Psi(x_0,r)+\|Du\|_{L^\infty(B_r^+(x_0))}\int_0^r \frac{\tilde{\omega}_{A^{\alpha\beta}, B_4^+}(t)}{t}\,dt\\
&\quad +\int_0^r \frac{\tilde{\omega}_{f_\alpha,B_4^+}(t)+\tilde{\omega}_{g,B_4^+}(t)}{t}\,dt.
\end{aligned}
$$
\item
For any $x_0\in B_3\cap \partial \bR^d_+$ and $0<\rho\le r\le 1/4$, we have
$$
\begin{aligned}
\Psi(x_0, \rho) &\lesssim_{d,\lambda, \gamma} \left(\frac{\rho}{r}\right)^{\gamma} \Psi(x_0, r)+\|Du\|_{L^\infty(B_r^+(x_0))}\tilde{\omega}_{A^{\alpha\beta},B_3^+}(\rho)\\
&\quad +\tilde{\omega}_{f_\alpha,B_3^+}(\rho)+\tilde{\omega}_{g,B_3^+}(\rho).
\end{aligned}
$$
\item
For any $x_0\in B_3^+$ and $0<r\le 1/8$, we have
$$
\begin{aligned}
&\sum_{j=0}^\infty \Phi(x_0, \kappa^j r)\lesssim_{d,\lambda,\gamma} r^{-d}\Big(\|Du\|_{L^1(B_{3r}^+(x_0))}+\|p\|_{L^1(B_{3r}^+(x_0))}\Big)\\
&\quad +\|Du\|_{L^\infty(B_{3r}^+(x_0))}\int_0^r \frac{\omega^\sharp_{A^{\alpha\beta},B_3^+}(t)}{t}\,dt+\int_0^r \frac{\omega^\sharp_{f_\alpha,B_3^+}(t)+\omega^\sharp_{g,B_3^+}(t)}{t}\,dt.
\end{aligned}
$$
\item
For any $x_0\in B_3^+$ and $0<\rho\le r\le 1/8$, we have
$$
\begin{aligned}
\Phi(x_0, \rho)&\lesssim_{d,\lambda,\gamma} \left(\frac{\rho}{r}\right)^{\gamma} r^{-d} \Big(\|\hat{U}\|_{L^1(B_{3r}^+(x_0))}+\|D_{x'}u\|_{L^1(B_{3r}^+(x_0))}\Big)\\
&\quad +\|Du\|_{L^\infty(B_{3r}^+(x_0))} {\omega}^\sharp_{A^{\alpha\beta},B_3^+}(\rho)+ {\omega}^\sharp_{f_\alpha,B_3^+}(\rho)+ {\omega}^\sharp_{g,B_3^+}(\rho).
\end{aligned}
$$
\end{enumerate}
In the above, each integration is finite; see Lemma \ref{171118@lem5}.
\end{lemma}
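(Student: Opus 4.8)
The plan is to mimic the proof of Lemma~\ref{171115@lem1} almost verbatim, the only conceptual difference being that here $A^{\alpha\beta}$, $f_\alpha$, $g$ are of Dini mean oscillation in \emph{all} directions, so at each step we freeze the coefficients and data at their constant averages over the relevant half ball and the comparison operator $\cL_0$ has constant coefficients; accordingly we invoke the constant-coefficient half-ball $L^q$-mean oscillation estimate from the last part of Lemma~\ref{171109@lem3} (rather than its partial version) and the half-ball weak type-$(1,1)$ estimate Lemma~\ref{171110@lem2}.

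For $(i)$ and $(ii)$, fix $x_0\in B_3\cap\partial\bR^d_+$, set $\bar A^{\alpha\beta}=(A^{\alpha\beta})_{B_r^+(x_0)}$ and $\bar f_\alpha,\bar g$ analogously, and let $\cL_0u=D_\alpha(\bar A^{\alpha\beta}D_\beta u)$. As in the proof of Lemma~\ref{171020@lem10z} adapted to the half ball, decompose $(u,p)=(w,p_1)+(v,p_2)$ on $B_{4r}^+(x_0)$, with $(w,p_1)$ solving the $\cL_0$-Stokes system with data $I_{B_r^+(x_0)}F_\alpha$ and $I_{B_r^+(x_0)}G-\bigl(I_{B_r^+(x_0)}G\bigr)_{B_{4r}^+(x_0)}$, where $F_\alpha=(\bar A^{\alpha\beta}-A^{\alpha\beta})D_\beta u+f_\alpha-\bar f_\alpha$ and $G=g-\bar g$. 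The distribution-function argument of Lemma~\ref{170904@lem1}, now fed by Lemma~\ref{171110@lem2}, bounds $\bigl(\dashint_{B_r^+(x_0)}(|Dw|+|p_1|)^q\bigr)^{1/q}\lesssim\dashint_{B_r^+(x_0)}|F_\alpha|+|G|$, while $(v,p_2)=(u,p)-(w,p_1)$ solves the constant-coefficient homogeneous Stokes system with $\operatorname{div}v$ constant on $B_r^+(x_0)$, so its $L^q$-mean oscillation is controlled by the constant-coefficient part of Lemma~\ref{171109@lem3}. The reason $\Psi$ carries no affine correction on $D_{x'}u$ is that $u\equiv0$ on $B_r\cap\partial\bR^d_+$ forces $D_{x'}u(0)=0$. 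Choosing $\kappa=\kappa(d,\lambda,\gamma)$ with $C_0\kappa^{1-\gamma}\le1$ gives the one-step decay $\Psi(x_0,\kappa r)\le\kappa^\gamma\Psi(x_0,r)+\dots$; iterating, then summing over $j$ and using Lemma~\ref{171118@lem5}, yields $(i)$, and stopping the iteration at the appropriate generation yields $(ii)$.

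Part $(iii)$ is immediate from $(iv)$: put $\rho=\kappa^j r$ in $(iv)$, sum over $j\ge0$, use $\sum_j\kappa^{\gamma j}\lesssim1$ for the first term, and use Lemma~\ref{171118@lem5}~$(c)$ to convert $\sum_j\omega^\sharp_{\bullet,B_3^+}(\kappa^j r)$ into $\int_0^r\omega^\sharp_{\bullet,B_3^+}(t)\,t^{-1}\,dt$; the same lemma gives finiteness of all the integrals.

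The substance, and the main obstacle, is $(iv)$, which is the boundary/interior interpolation; having fixed $\kappa$ small enough that Lemma~\ref{171020@lem10z} and parts $(i)$--$(ii)$ all apply, I argue as follows. If $r/6<\rho\le r$, then $(iv)$ holds trivially from the definition of $\Phi$ and $(\rho/r)^\gamma\gtrsim1$. Otherwise $0<\rho\le r/6$, and we split on the height $x_{01}$ of $x_0$, repeatedly using that $\tilde\omega_{\bullet,B_3^+}(cs)\le c^\gamma\omega^\sharp_{\bullet,B_3^+}(s)$ when $cs\le1$ and that the enlarged balls below all nest inside $B_{3r}^+(x_0)$. \textbf{Case $r\le x_{01}$:} $B_{r/4}(x_0)\subset B_6^+$ is a genuine interior ball, so the interior Campanato estimate (Lemma~\ref{171020@lem10z}~$(ii)$) at scale $r/4$ applies, and one bounds $\Phi(x_0,r/4)$ crudely by the relevant $L^1$-average over $B_{r/4}(x_0)$. \textbf{Case $x_{01}\le4\rho$:} with $\bar x_0=(0,x_0')$ we have $B_\rho^+(x_0)\subset B_{5\rho}^+(\bar x_0)$, so $\Phi(x_0,\rho)\lesssim\Psi(\bar x_0,5\rho)$, and we apply $(ii)$ at a fixed scale $r'\sim r$ with $B_{r'}^+(\bar x_0)\subset B_{3r}^+(x_0)$. \textbf{Case $4\rho\le x_{01}\le r$:} set $R=x_{01}/4$, so $B_{4R}(x_0)=B_{x_{01}}(x_0)$ is an \emph{open} interior ball contained in $B_6^+$ because the open ball at height $x_{01}$ of radius $x_{01}$ lies in $\bR^d_+$; first run the interior estimate at scale $R$ to descend from $R$ to $\rho$, then use $\Phi(x_0,R)\lesssim\Psi(\bar x_0,5R)$ and $(ii)$ to descend from $\sim r$ down to $5R$, composing the contraction factors via $(\rho/R)^\gamma(R/r)^\gamma=(\rho/r)^\gamma$ and absorbing $(\rho/R)^\gamma\tilde\omega_{\bullet,B_3^+}(5R)\lesssim\omega^\sharp_{\bullet,B_3^+}(\rho)$, with $B_{5r/4}^+(\bar x_0)\subset B_{3r}^+(x_0)$. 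The only delicate points are the half-ball geometry (the nesting, and $B_{x_{01}}(x_0)\subset\bR^d_+$) and the scale bookkeeping in case (c) that lets $\omega^\sharp$ swallow the two-step iteration cleanly.
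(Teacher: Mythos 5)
Your proposal is correct and follows essentially the same route as the paper, which simply refers back to the proofs of Lemma \ref{171020@lem10z} (for $(i)$--$(ii)$, with half-ball versions of the $L^q$-oscillation and weak type-$(1,1)$ estimates) and of Lemma \ref{171115@lem1}~$(iii)$--$(iv)$ (for the three-case interior/boundary interpolation). Your write-up in fact supplies more of the details, including the constant-coefficient freezing over $B_r^+(x_0)$, the role of $u|_{\partial\bR^d_+}=0$ in removing the affine correction on $D_{x'}u$, and the scale bookkeeping $\left(\rho/R\right)^\gamma\left(R/r\right)^\gamma=\left(\rho/r\right)^\gamma$ with absorption into $\omega^\sharp$, all consistent with the paper.
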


\begin{proof}
Using Lemmas \ref{171109@lem3} and \ref{171110@lem2}, and following the proof of Lemma \ref{171020@lem10z}, one can easily check that the estimates in $(i)$ and $(ii)$ hold.
For the estimates in $(iii)$ and $(iv)$, see the proofs of Lemma \ref{171115@lem1} $(iii)$ and $(iv)$ with obvious modifications.
\end{proof}

Now we are ready to prove the theorem.

\begin{proof}[Proof of Theorem \ref{MT2}]
To prove the assertion $(a)$ in the theorem, we fix $\gamma\in (0,1)$, and let $\kappa=\kappa(d,\lambda,\gamma)$ be the constant from Lemma \ref{171121@lem1}.
We denote
$$
\omega_{\bullet}=\omega_{\bullet,B_4^+}, \quad \tilde{\omega}_{\bullet}=\tilde{\omega}_{\bullet,B_4^+}, \quad \omega_{\bullet}^\sharp=\omega^\sharp_{\bullet,B_3^+},
$$
$$
\cF(r)=\int_0^r \frac{\tilde{\omega}_{f_\alpha}(t)+\tilde{\omega}_{g}(t)}{t}\,dt, \quad \cG(r)=\int_0^r \frac{\omega^\sharp_{f_\alpha}(t)+\omega^\sharp_{g}(t)}{t}\,dt.
$$
By using Lemma \ref{171121@lem1} $(iii)$ and following the same steps as in the proof of \eqref{171024@eq5b},
we get the $L^\infty$-estimate for $Du$ and $p$:
\begin{equation}		\label{171121@eq4}	
\|Du\|_{L^\infty(B_2^+)}+\|p\|_{L^\infty(B_2^+)} \lesssim_{d,\lambda,\gamma,\omega_{A^{\alpha\beta}}} \|Du\|_{L^1(B_6^+)}+\|p\|_{L^1(B_6^+)}+\cG(1).
\end{equation}
To derive the estimates of the modulus of continuity of $Du$ and $p$,
we observe that by using Lemma \ref{171121@lem1}, and following the proof of \eqref{171120@A1}, we have
for any $x_0\in B_1^+$ and $0<\rho\le 1/20$ that
$$
\begin{aligned}
\sum_{j=0}^\infty \Phi(x_0, \kappa^j \rho) &\lesssim_{d,\lambda,\gamma} \rho^{\gamma}\big(\|Du\|_{L^1(B_6^+)}+\|p\|_{L^1(B_6^+)}\big)\\
&+\|Du\|_{L^\infty(B_2^+)}\int_0^{5\rho}\frac{\tilde{\omega}_{A^{\alpha\beta}}(t)
+\omega^\sharp_{A^{\alpha\beta}}(t)}{t}\,dt+\cF(5\rho)
+\cG(\rho).
\end{aligned}
$$
Using this and following the same steps as in the proof of \eqref{171128@B1}, we have for $x,y\in B_1^+$ with $|x-y|\le 1/20$ that
\begin{align}
\nonumber
&|Du(x)-Du(y)|+|p(x)-p(y)|\\
\nonumber
&\le C \big(\|Du\|_{L^1(B_6^+)}+\|p\|_{L^1(B_6^+)}\big)
\left(|x-y|^{\gamma}+\int_0^{5|x-y|}\frac{\tilde{\omega}_{A^{\alpha\beta}}(t)+\omega^\sharp_{A^{\alpha\beta}}(t)}{t}\,dt\right)\\
\label{171121@eq4a}
&\quad +C\cG(1) \int_0^{5|x-y|}\frac{\tilde{\omega}_{A^{\alpha\beta}}(t) +\omega^\sharp_{A^{\alpha\beta}}(t)}{t}\,dt+C\cF(5|x-y|) +C\cG(|x-y|),
\end{align}
where $C>0$ is a constant depending only on $d$, $\lambda$, $\gamma$, and $\omega_{A^{\alpha\beta}}$.
This completes the proof of the assertion $(a)$ in the theorem.

To prove the assertion $(b)$, we set $\gamma=\frac{1+\gamma_0}{2}$.
Let $\kappa=\kappa(d,\lambda,\gamma)$ be the constant from Lemma \ref{171121@lem1}.
Since it holds that (see Lemma \ref{171118@lem5})
$$
\tilde{\omega}_{f,B_4^+}(r)+\int_0^r \frac{\tilde{\omega}_{f,B_4^+}(t)+\omega^\sharp_{f,B_3^+}(t)}{t}\,dt \lesssim [f]_{C^{\gamma_0}(B_6^+)}r^{\gamma_0},
$$
by \eqref{171121@eq4} and \eqref{171121@eq4a}, we have
$$
\begin{aligned}
\|Du\|_{C^{\gamma_0}(B_1^+)}+\|p\|_{C^{\gamma_0}(B_1^+)}&\lesssim_{d,\lambda,\gamma_0, [A^{\alpha\beta}]_{C^{\gamma_0}(B_6^+)}} \|Du\|_{L^1(B_6^+)}+\|p\|_{L^1(B_6^+)}\\
&\quad +[f_\alpha]_{C^{\gamma_0}(B_6^+)}+[g]_{C^{\gamma_0}(B_6^+)}.
\end{aligned}
$$
The theorem is proved.
\end{proof}

\subsection{Proof of Theorem \ref{MT3}}
\label{180102@sec1}
The proof of the theorem is similar to that of Theorem \ref{M3}.
Let $\eta$ be a smooth function on $\bR^d$ satisfying
$$
0\le \eta\le 1, \quad \eta \equiv 1 \, \text{ on }\, B_{5/2}, \quad \operatorname{supp} \eta\subset B_{3}, \quad |\nabla \eta|\lesssim_d 1.
$$
Set $\tilde{\cL} u=D_\alpha (\tilde{A}^{\alpha\beta}D_\beta u)$,
where $\tilde{A}^{\alpha\beta}_{ij}=\eta A^{\alpha\beta}_{ij}+\lambda(1-\eta)\delta_{\alpha\beta}\delta_{ij}$.
Then $\tilde{A}^{\alpha\beta}$ are of partially Dini mean oscillation in $B_4^+$ with
$$
\begin{aligned}
\omega_{\tilde{A}^{\alpha\beta},x'}(r):&=\sup_{x\in B_4^+} \dashint_{B_r^+(x)}\bigg|\tilde{A}^{\alpha\beta}(y)-\dashint_{B_r'(x')}\tilde{A}^{\alpha\beta}(y_1,z')\,dz'\bigg|\,dy\\
&\lesssim_{d,\lambda} \omega_{A^{\alpha\beta},x'}(r)+r.
\end{aligned}
$$
Moreover, by the same reasoning as in Lemma \ref{171118@lem1} $(c)$, we see that for any $\rho>0$, there exists $k_0=k_0(d,\omega_{A^{\alpha\beta},x'}, \rho)\in (0,1)$ such that
\begin{equation}		\label{180102@eq5}
\sup_{r\in (0, k_0)}\omega_{\tilde{A}^{\alpha\beta},x'}(r)<\rho.
\end{equation}
We fix a domain $\Omega$ with a smooth boundary such that
$$
B_4^+\subset \Omega\subset B_6^+.
$$
Then by \eqref{180102@eq5}, we see that
\begin{enumerate}[$(a)$]
\item
$\Omega$ satisfies \cite[Assumption 2.1]{MR3758532} with for some $R_0=R_0(d)\in (0,1)$.
\item
For any $\rho>0$, there exists $R_1\in (0,R_0]$, depending only on $d$, $\lambda$, $\omega_{A^{\alpha\beta},x'}$, and $\rho$, such that $\tilde{A}^{\alpha\beta}$ and $\Omega$ satisfy \cite[Assumption 2.2 ($\rho$)]{MR3758532}.
\end{enumerate}
Therefore, the $W^{1,q}$-solvability in \cite[Theorem 2.4]{MR3758532} is available for $\tilde{\cL}$ and $\tilde{\cL}^*$ on $\Omega$, where $\tilde{\cL}^*$ is the adjoint operator of $\tilde{\cL}$.
Using this together with Theorem \ref{MT1}, and following the proof of Theorem \ref{M3} with obvious modifications,
we prove Theorem \ref{MT3}.
We omit the details.
\qed

\subsection{Proof of Theorem \ref{MT4}}
\label{171122@sec4}
Let $F$ be a bounded operator on
$L^1(B_6^+)^{d\times d}\times L^1(B_6^+)$ and also on $L^2(B_6^+)^{d\times d}\times L^2(B_6^+)$,
and $T$ be a bounded linear operator on $L^2(B_6^+)^{d\times d}\times L^2(B_6^+)$,
respectively, given as in the proof of  Lemma \ref{170904@lem1}.
Then we have
$$
T_0=T\circ F,
$$
where $T_0$ is the operator mentioned in the statement of the theorem.

It suffices to show that $T$ satisfies the hypothesis of Lemma \ref{171110@lem1} with $R_0=6$, $\mu=1/48$, and $c=96$.
Fix $x_0\in B_1^+$ and $0<r<1/48$.
Let $\tilde{f}_\alpha\in \tilde{L}^2(B_6^+)^d$ and $\tilde{g}\in \tilde{L}^2(B_6^+)$ be supported in $B_r^+(x_0)$.
Assume that $(u,p)\in W^{1,2}_0(B_6^+)^d\times \tilde{L}^2(B_6^+)$ is the weak solution of \eqref{171122@A1}
with $(f_1,\ldots,f_d,g)=F^{-1}(\tilde{f}_1,\ldots,\tilde{f}_d,\tilde{g})$.
Let $R\in [96 r,2)$ so that $B_1^+\setminus B_R(x_0)\neq \emptyset$, and let $\cL^*$ be the adjoint operator, i.e.,
$$
\cL^*v=D_\alpha(A^{\alpha\beta}_* D_\beta v), \quad A^{\alpha\beta}_*=(A^{\beta\alpha})^{\top}.
$$
Then by \cite[Lemma 3.2]{MR3693868}, for given
$$
\phi_\alpha\in C^\infty_c((B_{2R}(x_0)\setminus B_{R}(x_0))\cap B_1^+)^d\quad\text{and} \quad \psi \in C^\infty_c((B_{2R}(x_0)\setminus B_{R}(x_0))\cap B_1^+),
$$
there exists a unique $(v,\pi)\in W^{1,2}_0(B_6^+)^d\times \tilde{L}^2(B_6^+)$ satisfying
$$
\left\{
\begin{aligned}
\cL^* v+\nabla \pi = D_\alpha \phi_\alpha \quad \text{in }\, B_6^+,\\
\operatorname{div} v=\psi-(\psi)_{B_6^+} \quad \text{in }\, B_6^+.
\end{aligned}
\right.
$$
Set $V:=A^{1\beta}_* D_\beta v+\pi e_1$, and observe that $(v,\pi)$ satisfies
$$
\left\{
\begin{aligned}
\cL^* v+\nabla \pi = 0 \quad \text{in }\, B_{R}^+(x_0),\\
\operatorname{div} v=-(\psi)_{B_6^+} \quad \text{in }\, B_{R}^+(x_0).
\end{aligned}
\right.
$$
Now we claim that
\begin{equation}		\label{171122@A2}
\begin{aligned}
&\big\|V-(V)_{B_r^+(x_0)}\big\|_{L^\infty(B_r^+(x_0))}+\big\|D_{x'}v-(D_{x'}v)_{B_r^+(x_0)}\big\|_{L^\infty(B_r^+(x_0))}\\
&+\big\|D_1 v^1-(D_1v^1)_{B_r^+(x_0)}\big\|_{L^\infty(B_r^+(x_0))} \lesssim \big|(\psi)_{B_6^+}\big| \left(\left(\frac{r}{R}\right)^\gamma+\left(\ln \frac{1}{r}\right)^{-1}\right)\\
& \quad +R^{-d/2}\||Dv|+|\pi|\|_{L^2(B_{R}^+(x_0))} \left(\left(\frac{r}{R}\right)^{\gamma}+\left(\ln \frac{1}{r}\right)^{-1}\right).
\end{aligned}
\end{equation}
We consider the following two cases:
$$
x_{01} \ge R/16, \quad x_{01} < R/16.
$$
\begin{enumerate}[i.]
\item
$x_{01}\ge R/16$: In this case, we have $B_{R/16}(x_0)\subset \bR^d_+$.
Then $(v,\pi)$ satisfies
$$
\left\{
\begin{aligned}
\cL^* v+\nabla \pi = 0 \quad \text{in }\, B_{R/16}(x_0),\\
\operatorname{div} v=-(\psi)_{B_6^+} \quad \text{in }\, B_{R/16}(x_0).
\end{aligned}
\right.
$$
Therefore, by the same reasoning as in \eqref{171122@A3}, we get \eqref{171122@A2}.
\item
$x_{01}<R/16$:
Set $\bar{x}_0=(0, x_0')\in B_1\cap \partial \bR^d_+$, and observe that
$$
B_r^+(x_0)\subset B_{R/12}^+(\bar{x}_0)\subset B_{R/2}^+(\bar{x}_0)\subset B_R^+(x_0).
$$
Since $(v,\pi)$ satisfies
$$
\left\{
\begin{aligned}
\cL^* v+\nabla \pi = 0 \quad \text{in }\, B_{R/2}^+(\bar{x}_0),\\
\operatorname{div} v=-(\psi)_{B_6^+} \quad \text{in }\, B_{R/2}^+(\bar{x}_0),
\end{aligned}
\right.
$$
by a similar argument that led to \eqref{171128@B1} and \eqref{171219@eq3},
we obtain that
\begin{equation}		\label{171219@eq4a}
\begin{aligned}
&|V(x)-V(y)|+|D_{x'}v(x)-D_{x'}v(y)|\\
&\lesssim R^{-d/2} \||Dv|+|\pi|\|_{L^2(B_{R/2}^+(\bar{x}_0))}\\
&\quad \times \left(\left(\frac{|x-y|}{R}\right)^\gamma+ \int_0^{5|x-y|}\frac{\tilde{\omega}_{A^{\alpha\beta},x'}(t)+\omega^\sharp_{A^{\alpha\beta},x'}(t)}{t}\,dt\right)\\
&\quad +\big|(\psi)_{B_6^+}\big|\left(\left(\frac{|x-y|}{R}\right)^\gamma+\int_0^{5|x-y|}\frac{\tilde{\omega}_{A^{\alpha\beta},x'}(t)+\omega^\sharp_{A^{\alpha\beta},x'}(t)}{t}\,dt\right)
\end{aligned}
\end{equation}
for any $x,y\in B_r^+(x_0)$.
Note that by \eqref{171218@eq5} and \cite[Eq. (3.5)]{MR3620893}, we have
$$
\tilde{\omega}_{A^{\alpha\beta},x'}(t)\lesssim_{d,\lambda,C_0} \bigg(\ln \frac{t}{4}\bigg)^{-2}, \quad \forall t\in (0,1].
$$
Using this, it is routine to check that
\begin{equation}		\label{171219@eq4}
\int_0^{10r}\frac{\tilde{\omega}_{A^{\alpha\beta},x'}(t)+\omega^\sharp_{A^{\alpha\beta},x'}(t)}{t}\,dt\lesssim \bigg(\ln \frac{1}{r}\bigg)^{-1}.
\end{equation}
Combining \eqref{171219@eq4a} and \eqref{171219@eq4}, and utilizing the fact that
$$
D_1v^1=-\sum_{i=2}^d D_i v^i-(\psi)_{B_6^+},
$$
we get \eqref{171122@A2}.

\end{enumerate}
The estimate \eqref{171122@A2} corresponds to \eqref{171122@A3}.
The rest of the proof is identical to that of Theorem \ref{M2} and omitted.
\qed

\section{Appendix}		\label{Sec8}

In Appendix, we provide the proofs of some technical lemmas used in the previous sections.

\begin{lemma}		\label{171118@lem1}
Let $f\in L^1(B_6)$, and recall the notation \eqref{171118@eq1}.
Assume that
$$
\int_0^1 \frac{{\omega}_{f,x',B_4}(t)}{t}\,dt<\infty.
$$
\begin{enumerate}[$(a)$]
\item
For any $0<r \le 1$, we have
\begin{equation}	\label{171118@eq1a}
\sum_{j=0}^\infty \tilde{\omega}_{f,x',B_3}(\kappa^j r) \lesssim_{d,\gamma,\kappa} \int_0^r \frac{\tilde{\omega}_{f,x',B_4}(t)}{t}\, dt <\infty.
\end{equation}
\item
Let $0<\gamma_0<\gamma$ and $[f]_{C^{\gamma_0}_{x'}(B_6)}<\infty$.
Then for any $0<r\le1$, we have
$$
\tilde{\omega}_{f,x',B_4}(r)\lesssim_{\gamma_0,\gamma,\kappa}[f]_{C^{\gamma_0}_{x'}(B_6)}r^{\gamma_0}.
$$
\item
For any $\rho>0$, there exists $k_0\in (0,1)$, depending only on $d$, $\omega_{f,x',B_4}$, and $\rho$, such that
$$
\sup_{r\in (0, k_0)}\omega_{f,x',B_4}(r)<\rho.
$$
\end{enumerate}
The same results hold if the notation \eqref{171118@eq1} and $[f]_{C^{\gamma_0}_{x'}(B_6)}$ are replaced by the notation \eqref{171217@eq1} and $[f]_{C^{\gamma_0}(B_6)}$.
\end{lemma}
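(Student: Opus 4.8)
The plan is to extract one genuinely analytic input and then read off all four assertions by elementary manipulations of the geometric series defining $\tilde\omega$. Throughout write $\bar\omega(s):=\omega_{f,x',B_4}(\min(s,1))$ for $s>0$, and note that $\omega_{f,x',B_3}\le\omega_{f,x',B_4}$ and $\tilde\omega_{f,x',B_3}\le\tilde\omega_{f,x',B_4}$ pointwise (the defining suprema are over smaller sets). The analytic input is an \emph{almost-monotonicity}: there is $C_0=C_0(d)$ with
$$s^d\,\bar\omega(s)\le C_0\,\sigma^d\,\bar\omega(\sigma)\qquad\text{whenever }0<s\le\sigma,$$
which I would prove by comparing the average defining $\omega_{f,x',B_4}$ over $B_s(x)$ with its counterpart over $B_\sigma(x)$ for $x\in B_4$; the only slightly delicate point is that the subtracted quantity $\dashint_{B_r'(x')}f(y_1,\cdot)$ is a $(d-1)$-dimensional average, so it must be matched against averages over slices of the larger ball (for the full mean oscillation $\omega_{f,B_m}$ this is immediate from $\dashint_{B_s(x)}|f-(f)_{B_s(x)}|\le 2(\sigma/s)^d\dashint_{B_\sigma(x)}|f-(f)_{B_\sigma(x)}|$). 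The second, purely formal ingredient is the recursion $\tilde\omega_{f,x',B_m}(\kappa s)=\kappa^\gamma\bar\omega_{f,x',B_m}(s)+\kappa^\gamma\tilde\omega_{f,x',B_m}(s)$, obtained by reindexing $i\mapsto i-1$. From these: since $\tilde\omega_{f,x',B_4}(t)\ge\kappa^\gamma\bar\omega(\kappa^{-1}t)\gtrsim_d\kappa^{\gamma+d}\omega_{f,x',B_4}(t)$ for $t\in(0,1]$, we get $\omega_{f,x',B_4}(t)\lesssim_{d,\gamma,\kappa}\tilde\omega_{f,x',B_4}(t)$; and summing the recursion over $s=\kappa^{j-1}r$, $j\ge1$, and solving gives, for $0<r\le1$,
$$\sum_{j\ge0}\tilde\omega_{f,x',B_3}(\kappa^j r)=\frac{1}{1-\kappa^\gamma}\Big(\tilde\omega_{f,x',B_3}(r)+\kappa^\gamma\sum_{j\ge0}\omega_{f,x',B_3}(\kappa^j r)\Big).$$

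For (a) it then remains to bound each term on the right by $\int_0^r\frac{\tilde\omega_{f,x',B_4}(t)}{t}\,dt$. Almost-monotonicity gives $\omega_{f,x',B_4}(t)\gtrsim_{d,\kappa}\omega_{f,x',B_4}(\kappa^j r)$ for $t\in[\kappa^j r,\kappa^{j-1}r]$, hence $\int_{\kappa^j r}^{\kappa^{j-1}r}\frac{\omega_{f,x',B_4}(t)}{t}\,dt\gtrsim_{d,\kappa}\omega_{f,x',B_4}(\kappa^j r)$; summing over $j\ge1$ (these intervals tile $(0,r]$) and using $\omega_{f,x',B_4}\lesssim\tilde\omega_{f,x',B_4}$ controls $\sum_{j\ge1}\omega_{f,x',B_3}(\kappa^j r)$. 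Likewise, the recursion together with almost-monotonicity yields $\tilde\omega_{f,x',B_4}(t)\gtrsim_{d,\gamma}\kappa^{\gamma+d}\tilde\omega_{f,x',B_4}(r)$ whenever $\kappa r\le t\le r$, so $\int_0^r\frac{\tilde\omega_{f,x',B_4}(t)}{t}\,dt\ge\int_{\kappa r}^r\frac{\tilde\omega_{f,x',B_4}(t)}{t}\,dt\gtrsim_{d,\gamma,\kappa}\tilde\omega_{f,x',B_4}(r)$; this bounds $\tilde\omega_{f,x',B_3}(r)$ as well as the leftover $j=0$ term $\omega_{f,x',B_3}(r)\le\omega_{f,x',B_4}(r)\lesssim\tilde\omega_{f,x',B_4}(r)$. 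Finiteness is visible from the substitution $u=\kappa^{-i}t$: $\int_0^r\frac{\tilde\omega_{f,x',B_4}(t)}{t}\,dt=\sum_{i\ge1}\kappa^{\gamma i}\int_0^{\kappa^{-i}r}\frac{\bar\omega(u)}{u}\,du\le\sum_{i\ge1}\kappa^{\gamma i}\big(\int_0^1\frac{\omega_{f,x',B_4}(u)}{u}\,du+i\,\omega_{f,x',B_4}(1)\ln(1/\kappa)\big)<\infty$ by hypothesis.

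For (b), I would first note $|f(y)-\dashint_{B_s'(x')}f(y_1,\cdot)|\le[f]_{C^{\gamma_0}_{x'}(B_6)}(2s)^{\gamma_0}$, so $\omega_{f,x',B_4}(s)\le 2^{\gamma_0}[f]_{C^{\gamma_0}_{x'}(B_6)}s^{\gamma_0}$ for $s\le1$; plugging this into the definition of $\tilde\omega_{f,x',B_4}(r)$, the terms with $\kappa^{-i}r<1$ sum to $\lesssim r^{\gamma_0}\sum_i\kappa^{(\gamma-\gamma_0)i}<\infty$ (this is where $\gamma>\gamma_0$ enters), and the terms with $\kappa^{-i}r\ge1$ contribute $\omega_{f,x',B_4}(1)\sum_{\kappa^i\le r}\kappa^{\gamma i}\lesssim r^\gamma\le r^{\gamma_0}$; the corresponding integral bound follows by integrating in $r$. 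For (c), if $\omega_{f,x',B_4}(r)\not\to0$ as $r\to0$, pick $\rho_0>0$ and $r_n\downarrow0$ with $\omega_{f,x',B_4}(r_n)\ge\rho_0$ and the intervals $[r_n,2r_n]\subset(0,1]$ pairwise disjoint; almost-monotonicity forces $\omega_{f,x',B_4}\gtrsim_d\rho_0$ on each $[r_n,2r_n]$, so $\int_0^1\frac{\omega_{f,x',B_4}(t)}{t}\,dt=\infty$, contradicting the hypothesis — hence $\omega_{f,x',B_4}(r)\to0$, which is precisely the assertion, with $k_0$ depending only on $d$, the function $\omega_{f,x',B_4}$, and $\rho$. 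Finally, every step above carries over verbatim to the full mean oscillation and $[f]_{C^{\gamma_0}(B_6)}$, with the almost-monotonicity even easier there. I expect the main obstacle to be exactly that first estimate in the partial case — matching the $(d-1)$-dimensional average appearing in the definition of $\omega_{f,x',B_m}$ against genuinely $d$-dimensional ball averages; once it is in hand, the remainder is bookkeeping with geometric series.
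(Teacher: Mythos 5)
Your proposal is structurally sound and in fact takes a genuinely different route to part (a). Where the paper proves the ``hard'' comparison \eqref{171118@eq1b}, namely $\omega_{f,x',B_3}(r)\lesssim_{d,\kappa}\inf_{\rho\in[\kappa r,r]}\omega_{f,x',B_4}(\rho)$ (the larger scale controlled by the smaller scale), via an explicit covering and chaining argument, you propose the opposite ``easy'' direction (small scale controlled by large scale), and you recover the series bound from the exact recursion $\tilde\omega(\kappa s)=\kappa^\gamma(\bar\omega(s)+\tilde\omega(s))$ rather than from the paper's direct estimate $\tilde\omega_{f,x',B_3}(\kappa^j r)\lesssim\int_{\kappa^{j+1}r}^{\kappa^j r}\tilde\omega_{f,x',B_4}(t)\,dt/t$. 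Given the comparison, your bookkeeping for (a)--(c) is correct; your explicit finiteness computation at the end of (a) even avoids the paper's appeal to the external reference for $\int_0^1\tilde\omega(t)\,dt/t<\infty$, and your part (c) is a clean contrapositive of what the paper does.

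The genuine gap is the decisive estimate itself: your almost-monotonicity $s^d\bar\omega(s)\le C_0(d)\,\sigma^d\bar\omega(\sigma)$ for \emph{all} $0<s\le\sigma$, with $\omega=\omega_{f,x',B_4}$ on \emph{both} sides, is left unproved and, as stated, is not obviously true; calling it ``slightly delicate'' underestimates it. The natural argument you sketch --- replace the $s$-slice average $h_s(y_1)=\dashint_{B'_s(x')}f(y_1,\cdot)$ by $h_\sigma$ and absorb $|h_\sigma-h_s|$ --- produces an integral of $|f-h_\sigma|$ over the cylinder $(x_1-s,x_1+s)\times B'_s(x')$, which sits inside $B_\sigma(x)$ only when $\sigma\ge\sqrt{2}\,s$. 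For $\sigma\in[s,\sqrt{2}s)$ the cylinder sticks out, and you are back to a covering/chaining argument in which the auxiliary centers leave $B_4$ when $x$ is near $\partial B_4$ --- precisely the issue the paper's shrinking from $B_4$ to $B_3$ in \eqref{171118@eq1b} is there to absorb. So almost-monotonicity for all ratios with the same domain $B_4$ on both sides requires essentially the paper's chaining together with a domain shift, which undoes the simplification you were aiming for.

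That said, the proposal is salvageable: every downstream use of almost-monotonicity in your argument (the bound on $\sum_{j\ge1}\omega(\kappa^j r)$, the inequalities $\omega(r)\lesssim\tilde\omega(r)$ and $\tilde\omega(t)\gtrsim\tilde\omega(r)$, and the contradiction in (c)) only needs the comparison with ratio $\sigma/s\ge\sqrt{2}$, at the cost of integrating over $t\in[\sqrt{2}\kappa^j r,\kappa^{j-1}r]$ rather than $[\kappa^j r,\kappa^{j-1}r]$ --- harmless since $\kappa\le 1/2$. And the $\sigma\ge\sqrt{2}s$ case of the almost-monotonicity \emph{does} follow from the cylinder estimate with a fixed center $x\in B_4$ and no chaining, hence with no domain shift. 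You should state the restriction explicitly and actually carry out that estimate, since it is the substance of the lemma, not bookkeeping. For the full mean oscillation (notation \eqref{171217@eq1}) your almost-monotonicity is the standard factor-of-two argument and holds for all $s\le\sigma$ without any of these issues, so that case really is as you describe.
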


\begin{proof}
To prove the assertion $(a)$, we observe that
(see \cite[Lemma 1]{MR2927619})
$$
\int_0^1 \frac{\tilde{\omega}_{f,x',B_4}(t)}{t}\,dt<\infty.
$$
Hence it suffices to show that the first inequality in \eqref{171118@eq1a} holds.
For this, we claim that for any $0<r \le 1$, we have
\begin{equation}		\label{171118@eq1b}
\omega_{f,x',B_3}(r)\lesssim_{d,\kappa} \inf_{\rho\in [\kappa r, r]}\omega_{f,x',B_4}(\rho).
\end{equation}
Let $x=(x_1,x')\in B_3$ and $0< r \le 1$.
Choose numbers $a_1,\ldots,a_m$, where $m=m(\kappa)$, in the open interval $(x_1-r, x_1+r)$ such that
$$
(x_1-r,x_1+r)\subset \bigcup_{i=1}^m \left(a_i-\frac{\kappa r}{4}, a_i+\frac{\kappa r}{4}\right).
$$
We also choose points $x'_1, \ldots,x'_n$, where $n=n(d,\kappa)$, in $B_r'(x')$ satisfying
$$
B'_r(x') \subset \bigcup_{i=1}^n B_{\kappa r/4}'(x'_i).
$$
Then for any $\rho\in [\kappa r,r]$, we have
\begin{align}
\nonumber
&\dashint_{B_{r}(x)}\bigg|f(y)-\dashint_{B'_{r}(x')}f(y_1,z')\,dz'\bigg|\,dy \\
\nonumber
&\lesssim_{d,\kappa} \sum_{i=1}^m \sum_{j=1}^n \dashint_{\alpha_i+\rho/4}^{\,\,\,\alpha_i-\rho/4} \dashint_{B'_{\rho/4}(x'_j)}\bigg|f(y_1,y')-\dashint_{B'_{r}(x')}f(y_1,z')\,dz'\bigg|\,dy'\,dy_1\\
\nonumber
&\lesssim \sum_{i=1}^m \sum_{j=1}^n \dashint^{\,\,\,\alpha_i-\rho/4}_{\alpha_i+\rho/4}\dashint_{B'_{\rho/4}(x'_j)}\bigg|f(y_1,y')-\dashint_{B'_{\rho/4}(x_j')}f(y_1,z')\,dz'\bigg|\,dy'\,dy_1\\
\nonumber
&\quad + \sum_{i=1}^m \sum_{j=1}^n \dashint^{\,\,\,\alpha_i-\rho/4}_{\alpha_i+\rho/4} \bigg|\dashint_{B'_{r}(x')}f(y_1,z')\,dz'-\dashint_{B'_{\rho/4}(x'_j)}f(y_1,z')\,dz'\bigg|\,dy_1 \\
\label{170921@eq2}
&=:I_1+I_2.
\end{align}
Observe that $I_1\lesssim \omega_{f,x',B_4}(\rho)$ and
$$
I_2\lesssim \sum_{i=1}^m  \sum_{j,k=1}^n \dashint^{\,\,\,\alpha_i-\rho/4}_{\alpha_i+\rho/4} \dashint_{B'_{\rho/4}(x'_k)}\bigg|f(y_1,y')-\dashint_{B'_{\rho/4}(x'_j)}f(y_1,z')\,dz'\bigg|\,dy'\,dy_1.
$$
For given $j$ and $k$, we find $\{z'_1=x'_j,z'_2, \ldots, z'_{l_{jk}}=x'_k\}\subset \{x_1',\ldots,x'_n\}$ such that $B'_{\rho/4}(z_l')\cap B'_{\rho/4}(z_{l+1}')\neq \emptyset$.
Then we have
\begin{align*}
&\dashint_{B'_{\rho/4}(x'_k)}\bigg|f(y_1,y')-\dashint_{B'_{\rho/4}(x'_j)}f(y_1,z')\,dz'\bigg|\,dy' \\
&\lesssim \sum_{l=1}^{l_{jk}} \dashint_{B'_{\rho/2}(z_l')}\bigg|f(y_1,y')-\dashint_{B'_{\rho/2}(z_l')} f(y_1,z')\,dz'\bigg|\,dy',
\end{align*}
and thus, we get
\begin{align*}
I_2&\lesssim \sum_{i=1}^m\sum_{j,k=1}^n\sum_{l=1}^{l_{jk}}\dashint^{\,\,\,\alpha_i-\rho/4}_{\alpha_i+\rho/4}\dashint_{B'_{\rho/2}(z_l')}\bigg|f(y)-\dashint_{B'_{\rho/2}(z_l')} f(y_1,z')\,dz'\bigg|\,dy\\
&\lesssim \omega_{f,x',B_4}(\rho).
\end{align*}
From \eqref{170921@eq2} and the estimates of $I_1$ and $I_2$, the claim \eqref{171118@eq1b} follows.

Now we are ready to prove the first inequality in \eqref{171118@eq1a}.
We set
$$
\hat{\omega}_{f,x',B_m}(r)=
\left\{
\begin{aligned}
\omega_{f,x',B_m}(r) &\quad \text{if }\, r < 1,\\
\omega_{f,x',B_m}(1) &\quad \text{if }\, r \ge 1,
\end{aligned}
\right.
$$
and observe that
$$
\tilde{\omega}_{f,x',B_m}(r)=\sum_{i=1}^\infty \kappa^{\gamma i} \hat{\omega}_{f,x',B_m}(\kappa^{-i}r).
$$
By \eqref{171118@eq1b}, we have for $0<r\le 1$ that
\begin{equation}		\label{170922@eq2}
\tilde{\omega}_{f,x',B_3}(r)\lesssim_{d,\gamma,\kappa} \inf_{\rho\in [\kappa r,r]} \tilde{\omega}_{f,x',B_4}(\rho).
\end{equation}
Indeed,  for any $\rho\in [\kappa r,r]$ and $i\in \{1,2,\ldots\}$, we have
$$
\hat{\omega}_{f,x',B_3}(\kappa^{-i}r)=\omega_{f,x',B_3}(\kappa^{-i}r) \lesssim \omega_{f,x',B_4}(\kappa^{-i}\rho)=\hat{\omega}_{f,x',B_4}(\kappa^{-i}\rho), \quad \kappa^{-i} r<1,
$$
and
$$
\hat{\omega}_{f,x',B_3}(\kappa^{-i}r) = \omega_{f,x',B_3}(1)\le \omega_{f,x',B_4}(1)=\hat{\omega}_{f,x',B_4}(\kappa^{-(i+1)}\rho), \quad \kappa^{-i}r\ge1.
$$
Thus, we get
\begin{align*}
\tilde{\omega}_{f,x',B_3}(r)
&=\sum_{\kappa^{-i}r< 1}\kappa^{\gamma i} \hat{\omega}_{f,x',B_3}(\kappa^{-i}r)+\sum_{\kappa^{-i}r\ge 1} \kappa^{\gamma i} \hat{\omega}_{f,x',B_3}(\kappa^{-i}r)\\
&\lesssim \sum_{i=1}^\infty \kappa^{\gamma i} \hat{\omega}_{f,x',B_4}(\kappa^{-i}\rho)+\sum_{i=1}^\infty \kappa^{\gamma(i+1)}\hat{\omega}_{f,x',B_4}(\kappa^{-(i+1)}\rho)\\
&\lesssim \tilde{\omega}_{f,x',B_4}(\rho),
\end{align*}
which gives \eqref{170922@eq2}.
Therefore, we obtain by \eqref{170922@eq2} that
$$
\tilde{\omega}_{f,x',B_3}(\kappa^j r) \lesssim \int_{\kappa^{j+1} r}^{\kappa^j r} \frac{\tilde{\omega}_{f,x',B_4}(t)}{t}\,dt, \quad j\in \{0,1,2,\ldots\}.
$$
Taking the summations of both sides of the above inequality to $j=0,1,2,\ldots$,
we see that the first inequality in \eqref{171118@eq1a} holds.

To prove the assertion $(b)$, we observe that
$$
\tilde{\omega}_{f,x',B_4}(r)=\sum_{\kappa^{-i}r<1} \kappa^{\gamma i} \omega_{f,x',B_4}(\kappa^{-i}r)+\sum_{\kappa^{-i}r\ge 1}\kappa^{\gamma i} \omega_{f,x',B_4}(1).
$$
Using the fact that $\omega_{f,x',B_4}(r)\le [f]_{C^{\gamma_0}_{x'}(B_6)}r^{\gamma_0}$ for $0<r\le 1$, we have
$$
\begin{aligned}
\tilde{\omega}_{f,x',B_4}(r)
& \lesssim [f]_{C^{\gamma_0}_{x'}(B_6)} r^{\gamma_0} \sum_{i=1}^\infty (\kappa^{\gamma-\gamma_0})^{i}+[f]_{C^{\gamma_0}_{x'}(B_6)}\sum_{i\ge \ln r/\ln \kappa}  (\kappa^{\gamma})^i\\
&\lesssim [f]_{C^{\gamma_0}_{x'}(B_6)}r^{\gamma_0}.
\end{aligned}
$$

Now we turn to the proof of the assertion $(c)$.
Observe that
$$
\omega_{f,x',B_4}(r)\le C_0 \inf_{t\in [r,2r]} \omega_{f,x',B_4}(t)\le C_0 \int_0^{2r} \frac{\omega_{f,x',B_4}(t)}{t}\,dt<\infty
$$
for any $0<r\le 1/2$, where $C_0=C_0(d)$.
Thus, for given $\rho>0$, by taking $k_0\in (0,1/2]$ such that
$$
\int_0^{2k_0}\frac{\omega_{f,x',B_4}(t)}{t}\,d t<\frac{\rho}{C_0},
$$
we get the desired inequality in the assertion $(c)$.
The lemma is proved.
\end{proof}

\begin{lemma}		\label{171118@lem5}
Let $f\in L^1(B_6^+)$, and recall the notation \eqref{171118@eq5}.
Assume that
$$
\int_0^1 \frac{\omega_{f,x',B_4^+}(t)}{t}\,dt<\infty.
$$
\begin{enumerate}[$(a)$]
\item
For any $0<r\le 1$, we have
$$
\sum_{j=0}^\infty \tilde{\omega}_{f,x',B_3^+}(\kappa^j r)\lesssim_{\gamma,\kappa} \int_0^r \frac{\tilde{\omega}_{f,x',B_4^+}(t)}{t}\,dt<\infty.
$$
\item
Let $0<\gamma_0<\gamma$ and $[f]_{C^{\gamma_0}_{x'}(B_6^+)}<\infty$.
Then for any $0<r\le 1$, we have
$$
\tilde{\omega}_{f,x',B_4^+}(r)\lesssim_{\gamma_0, \gamma, \kappa}[f]_{C^{\gamma_0}_{x'}(B_6^+)}r^{\gamma_0}.
$$

\item
For any $0<r \le 1$, we have
$$
\sum_{j=0}^\infty {\omega}_{f,x',B_3^+}^\sharp(\kappa^j r)\lesssim_{\gamma,\kappa} \int_0^r \frac{\omega^\sharp_{f,x',B_3^+}(t)}{t}\,dt<\infty.
$$
\end{enumerate}
The same results hold if the notation \eqref{171118@eq5} and $[f]_{C^{\gamma_0}_{x'}(B_6^+)}$ are replaced by the notation \eqref{171217@eq2} and $[f]_{C^{\gamma_0}(B_6^+)}$.
\end{lemma}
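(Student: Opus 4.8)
The plan is to follow the proof of Lemma~\ref{171118@lem1} almost verbatim, replacing balls $B_r(x)$ by half-balls $B_r^+(x)$ and treating the flat part $\{x_1=0\}$ of the boundary by the device from \cite{MR3747493}. Once the half-ball analogue of the basic covering estimate \eqref{171118@eq1b} is in hand, parts $(a)$ and $(b)$ are obtained exactly as in Lemma~\ref{171118@lem1}; part $(c)$ is then a short separate argument exploiting the structure of $\tilde\omega$. The version with the notation \eqref{171217@eq2} and $[f]_{C^{\gamma_0}(B_6^+)}$ is proved in the same way, using the simpler covering from \cite{MR3747493}.

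First I would prove that
$$
\omega_{f,x',B_3^+}(r)\ \lesssim_{d,\kappa}\ \inf_{\rho\in[\kappa r,r]}\omega_{f,x',B_4^+}(\rho),\qquad 0<r\le 1.
$$
For $x=(x_1,x')\in B_3^+$ and $0<r\le 1$ one covers the interval $\bigl(\max(0,x_1-r),\,x_1+r\bigr)$ by finitely many intervals of half-length $\kappa r/4$ with centers $a_i>0$ and covers $B_r'(x')$ by finitely many balls $B'_{\kappa r/4}(x_j')$, and then runs the $I_1+I_2$ splitting and the chaining through overlapping $(d-1)$-balls $B'_{\rho/4}(z_l')$ exactly as in the proof of Lemma~\ref{171118@lem1}. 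The only place where the half-ball geometry intervenes is that some of the intervals abut $\{y_1=0\}$; this is harmless, since the inner averages in \eqref{171118@eq5} are taken over \emph{full} $(d-1)$-balls in the $x'$ variables and all the relevant half-balls stay inside $B_4^+$ (one uses $x\in B_3^+$ and $r\le 1$ here, just as in the ball case). Granting this estimate, the passage $\omega\mapsto\tilde\omega$ and the telescoping and summation arguments of Lemma~\ref{171118@lem1} give $(a)$, and $(b)$ then follows from $\tilde\omega_{f,x',B_4^+}(r)\lesssim[f]_{C^{\gamma_0}_{x'}(B_6^+)}r^{\gamma_0}$ and a geometric series.

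For part $(c)$, write $\tilde\omega=\tilde\omega_{f,x',B_3^+}$, $\omega^\sharp=\omega^\sharp_{f,x',B_3^+}$, and let $\hat\omega=\hat\omega_{f,x',B_3^+}$ be the truncated modulus as in the proof of Lemma~\ref{171118@lem1}. Directly from \eqref{171118@eq5} one has
$$
\tilde\omega(\kappa t)=\kappa^{\gamma}\bigl(\hat\omega(t)+\tilde\omega(t)\bigr),\qquad\text{so in particular}\qquad \hat\omega(t)\le\kappa^{-\gamma}\,\tilde\omega(\kappa t).
$$
Expanding $\omega^\sharp(\kappa^j r)=\sup_{\kappa^j r\le R\le 1}(\kappa^j r/R)^{\gamma}\tilde\omega(R)$, comparing each admissible $R$ with the nearest point of the geometric grid $\{\kappa^{-i}r:i\ge 1\}$ (the discrepancy being absorbed via the displayed inequality), and re-summing, I expect to arrive at
$$
\omega^\sharp(\kappa^j r)\ \lesssim_{\gamma,\kappa}\ \sum_{p=1}^{j}\kappa^{\gamma(j-p)}\,\tilde\omega(\kappa^p r)\ +\ \kappa^{\gamma j}\,\tilde\omega(r).
$$
Summing over $j\ge 0$ and interchanging the order of summation gives $\sum_{j\ge 0}\omega^\sharp(\kappa^j r)\lesssim_{\gamma,\kappa}\sum_{p\ge 0}\tilde\omega(\kappa^p r)$, and part $(a)$ bounds the right-hand side by $\int_0^r\tilde\omega_{f,x',B_4^+}(t)\,t^{-1}\,dt<\infty$. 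Finally, since $t\mapsto\omega^\sharp(t)/t^{\gamma}=\sup_{t\le R\le 1}\tilde\omega(R)/R^{\gamma}$ is nonincreasing, comparing $\omega^\sharp(\kappa^j r)$ with $\int_{\kappa^{j+1}r}^{\kappa^j r}\omega^\sharp(t)\,t^{-1}\,dt$ shows that $\sum_{j\ge 0}\omega^\sharp(\kappa^j r)$ and $\int_0^r\omega^\sharp(t)\,t^{-1}\,dt$ are comparable with constants depending only on $\gamma$ and $\kappa$; combined with the previous step, this yields both the finiteness of the integral and the asserted inequality.

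The step I expect to be the main obstacle is the boundary-adapted covering behind the estimate $\omega_{f,x',B_3^+}(r)\lesssim\inf_{\rho\in[\kappa r,r]}\omega_{f,x',B_4^+}(\rho)$ — keeping all auxiliary half-balls inside $B_4^+$ while the original half-ball degenerates to a cap — together with, in part $(c)$, the bookkeeping required to pass from a supremum over all $R\in[\kappa^j r,1]$ to one over the geometric grid. Everything else is a routine transcription of Lemma~\ref{171118@lem1} and of the arguments in \cite{MR3747493,MR2927619}.
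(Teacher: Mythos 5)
Your parts $(a)$ and $(b)$ match the paper: it too establishes the half-ball covering estimate $\omega_{f,x',B_3^+}(r)\lesssim\inf_{\rho\in[\kappa r,r]}\omega_{f,x',B_4^+}(\rho)$ and then transcribes the argument of Lemma~\ref{171118@lem1} verbatim, and your final step for $(c)$ (the near-monotonicity $\omega^\sharp(r)\lesssim\inf_{\rho\in[\kappa r,r]}\omega^\sharp(\rho)$, which follows from $t\mapsto\omega^\sharp(t)/t^\gamma$ being nonincreasing, so that $\sum_j\omega^\sharp(\kappa^j r)\lesssim\int_0^r\omega^\sharp(t)\,t^{-1}\,dt$) is exactly the paper's closing step.

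The gap is in your intermediate bound for $(c)$. First, the algebraic identity $\tilde\omega(\kappa t)=\kappa^\gamma(\hat\omega(t)+\tilde\omega(t))$ does not by itself let you ``compare each admissible $R$ with the nearest grid point,'' because $\tilde\omega$ is not monotone and the identity only relates values whose ratio is exactly a power of $\kappa$; you need the covering estimate $\tilde\omega_{B_3^+}(R)\lesssim\inf_{\rho\in[\kappa R,R]}\tilde\omega_{B_4^+}(\rho)$ here, just as in part $(a)$. Second, even with that comparison in hand, the bound you write is missing the contribution of scales $R>r$. Splitting the grid $\{\kappa^{j-i}r:i\ge0\}$ at $i=j$ gives
$$
\omega^\sharp_{B_3^+}(\kappa^j r)\lesssim\sum_{p=1}^{j}\kappa^{\gamma(j-p)}\tilde\omega_{B_4^+}(\kappa^p r)+\kappa^{\gamma j}\tilde\omega_{B_4^+}(r)+\kappa^{\gamma j}\sum_{m\ge1}\kappa^{\gamma m}\,\hat{\tilde\omega}_{B_4^+}(\kappa^{-m}r),
$$
and the last ``double-tilde'' tail is what you drop; equivalently, the $R>r$ part of the supremum contributes $\kappa^{\gamma j}\omega^\sharp(r)$, not $\kappa^{\gamma j}\tilde\omega(r)$, and there is no pointwise bound $\omega^\sharp(r)\lesssim\tilde\omega(r)$ in general. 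The paper avoids the bookkeeping by bounding $\omega^\sharp_{B_3^+}(r)$ once and for all by this double-tilde of $\omega_{f,x',B_4^+}$ and then invoking \cite[Lemma 1]{MR2927619} to conclude $\int_0^1\omega^\sharp(t)\,t^{-1}\,dt<\infty$, after which the near-monotonicity step finishes. To repair your version, you would keep the third term above, note that $\sum_{m\ge1}\kappa^{\gamma m}\hat{\tilde\omega}_{B_4^+}(\kappa^{-m}r)$ is finite by \cite[Lemma 1]{MR2927619}, and then sum over $j$; at that point you are essentially reproducing the paper's proof.
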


\begin{proof}
We prove only the assertion $(c)$ because
the proofs of the assertions $(a)$ and $(b)$ are nearly the same as those of Lemma \ref{171118@lem1} $(a)$ and $(b)$.
Similar to \eqref{170922@eq2}, we have for $0<r\le 1$ that
\begin{equation}		\label{171118@eq6}
\tilde{\omega}_{f,x',B_3^+}(r)\lesssim_{d,\gamma,\kappa}\inf_{\rho\in [\kappa r,r]}\tilde{\omega}_{f,x',B_4^+}(\rho).
\end{equation}
For any $R\in [r,1]$, we choose an integer $i$ such that $\kappa^{-i}r\le R < \kappa^{-i-1}r$.
Then by \eqref{171118@eq6}, we have
$$
\left(\frac{r}{R}\right)^\gamma \tilde{\omega}_{f,x',B_3^+}(R) \lesssim_{d,\gamma,\kappa} \kappa^{\gamma i} \tilde{\omega}_{f,x',B_4^+}(\kappa^{-i}r),
$$
which implies that
$$
\omega^\sharp_{f,x',B_3^+}(r)\lesssim \sum_{i=0}^\infty \kappa^{\gamma i} \big(\tilde{\omega}_{f,x',B_4^+}(\kappa^{-i} r)[\kappa^{-i}r<1]+\tilde{\omega}_{f,x',B_4^+}(1)[\kappa^{-i}r\ge 1 ]\big).
$$
Thus using the fact that
$$
\int_0^1 \frac{\tilde{\omega}_{f,x',B_4^+}(t)}{t}\,dt<\infty,
$$
we get (see \cite[Lemma 1]{MR2927619})
$$
\int_0^{1} \frac{\omega^\sharp_{f,x',B_3^+}(t)}{t}\,dt<\infty.
$$
By the definition of $\omega^\sharp$, we have
$$
\omega^\sharp_{f,x',B_3^+}(r)\lesssim_{\gamma,\kappa}\inf_{\rho\in [\kappa r,r]}\omega^\sharp_{f,x',B_3^+}(\rho), \quad \forall r\in (0,1].
$$
Using the above two inequalities, we obtain that
$$
\sum_{j=0}^\infty \omega^\sharp_{f,x',B_3^+}(\kappa^j r)\lesssim \int_0^r \frac{\omega^\sharp_{f,x',B_3^+}(t)}{t}\,dt<\infty.
$$
This completes the proof of the assertion $(c)$.
\end{proof}

\section*{Acknowledgement}
The authors would like to thank the referee for a very careful reading of the manuscript and many useful comments.

\bibliographystyle{plain}

\end{document}